\pgfplotsset{compat=1.15}
\pgfplotsset{compat=1.15}
\DeclareMathOperator{\Aut}{Aut}
\DeclareMathOperator{\supp}{supp}
\DeclareMathOperator{\spn}{span}
\DeclareMathOperator{\Ann}{Ann}
\theoremstyle{plain}
\newtheorem{thrm}{Theorem}[section]
\newtheorem{cor}[thrm]{Corollary}
\newtheorem{prop}[thrm]{Proposition}
\newtheorem{lem}[thrm]{Lemma}
\theoremstyle{definition}
\newtheorem{defn}[thrm]{Definition}
\newtheorem{rem}[thrm]{Remark}
\newtheorem{exm}[thrm]{Example}
\crefname{thrm}{Theorem}{Theorems}
\crefname{theorem}{Theorem}{Theorems}
\crefname{lem}{Lemma}{Lemmas}
\crefname{cor}{Corollary}{Corollaries}
\crefname{prop}{Proposition}{Propositions}
\crefname{defn}{Definition}{Definitions}
\crefname{exm}{Example}{Examples}
\crefname{rem}{Remark}{Remarks}
\crefname{conj}{Conjecture}{Conjectures}
\crefname{quest}{Question}{Questions}
\crefname{section}{Section}{Sections}
\crefname{equation}{\unskip}{\unskip}
\crefname{enumi}{\unskip}{\unskip}
\crefname{subsection}{Subsection}{Subsections}
\renewcommand{\iff}{\Leftrightarrow}
\newcommand{\impl}{\Rightarrow}
\newcommand{\gen}[1]{\langle #1\rangle}
\newcommand{\G}{\Gamma}
\newcommand{\lb}{\lambda}
\newcommand{\sg}{\sigma}
\newcommand{\vf}{\varphi}
\newcommand{\A}{\mathcal{A}}
\newcommand{\Z}{\mathbb{Z}}
\newcommand{\sst}{\subseteq}
\newcommand{\id}{\mathrm{id}}
\newcommand{\ch}{\mathrm{char}}
\newcommand{\m}{{}^{-1}}
\title{
	$\sg$-matching and interchangeable structures on certain associative algebras
}
\author{
	Mykola Khrypchenko
}
\abstract{%
	We describe $\sg$-matching, interchangeable and, as a consequence, totally compatible products on some classes of associative algebras, including unital algebras, the semigroup algebras of rectangular bands, algebras with enough idempotents, free non-unital associative algebras and free non-unital commutative associative algebras.
}
\keywords{
	Associative algebras, Compatible structures, $\sg$-matching structures, Interchangeable structures, Totally compatible structures.
}
\begin{document}
	
	
	
	
	\section*{Introduction}	
	Let $n$ be a positive integer and $\A$ a variety of algebras with one $n$-linear operation $[\cdot,\dots,\cdot]$. Consider two $n$-linear operations $[\cdot,\dots,\cdot]_1$ and $[\cdot,\dots,\cdot]_2$ on the same vector space $V$, such that the algebras $(V,[\cdot,\dots,\cdot]_1)$ and $(V,[\cdot,\dots,\cdot]_2)$ belong to $\A$. Then $[\cdot,\dots,\cdot]_1$ and $[\cdot,\dots,\cdot]_2$ are said to be \textit{compatible} if $(V,[\cdot,\dots,\cdot]_1+[\cdot,\dots,\cdot]_2)$ belongs to $\A$. In this case, the triple $(V,[\cdot,\dots,\cdot]_1,[\cdot,\dots,\cdot]_2)$ is called a \textit{compatible $\A$-algebra}. For example, there are compatible associative algebras~\cite{Carinena-Grabowski-Marmo2000,Odesskii-Sokolov06}, compatible Lie algebras~\cite{Golubchik-Sokolov02}, compatible pre-Lie algebras~\cite{Abdelwahab-Kaygorodov-Makhlouf24}, compatible Leibniz algebras~\cite{Makhlouf-Saha23} and so on. If the signature of a variety $\A$ has several operations (of possibly different types), one can consider the compatibility with respect to some fixed operation, for example, compatible Poisson brackets~\cite{Bolsinov91}, compatible Hom-Lie algebras~\cite{Das23CompHomLie}, compatible Hom-Lie triple systems~\cite{Teng-Long-Zhang-Lin23} and so on, or with respect to all the operations at the same time as in~\cite[Definition A]{Strohmayer08}. 
	
	The notion of a pair of compatible Poisson or Lie brackets seems to have firstly appeared in mathematical physics~\cite{Magri78,Reyman-Semenov89,Bolsinov91} and has been studied in this context for a couple of decades~\cite{Carinena-Grabowski-Marmo2000,Bolsinov-Borisov02,Golubchik-Sokolov02,Golubchik-Sokolov05}. A pair of compatible associative products was introduced (under the name ``quantum bi-Hamiltonian system'') and investigated from the purely algebraic point of view in~\cite{Carinena-Grabowski-Marmo2000}, where some general examples of such products were given. Odesski and Sokolov observed in~\cite{Odesskii-Sokolov06} that the description of pairs of compatible associative structures can be an interesting mathematical problem on its own. Thus, they managed to characterize in~\cite{Odesskii-Sokolov06} the compatible associative products on the matrix algebra $M_n(\mathbb{C})$ in terms of the so-called $n$-dimensional representations of $M$-algebras, and in the subsequent paper~\cite{Odesskii-Sokolov08} they used these products to construct a solution to the classical Yang-Baxter equation. Recently, there has appeared an interest in the algebraic classification of nilpotent compatible algebras of small dimensions~\cite{Abdelwahab-Kaygorodov-Makhlouf24,Ladra-Cunha-Lopes24}.
	
	At a more abstract level, compatible operations have been studied by specialists in operad theory. Dotsenko and  Khoroshkin calculated in~\cite{Dotsenko-Khoroshkin07} the dimensions of the graded components of the operads of a pair of compatible Lie brackets and compatible Poisson brackets. For the operad of a pair of compatible associative products, the corresponding dimensions have been calculated by Dotsenko in~\cite{Dotsenko09}. Strohmayer~\cite{Strohmayer08} gave a general way to construct the (binary quadratic) operad of two compatible structures from the (binary quadratic) operad of a single structure. He also pointed out in \cite[p. 2525]{Strohmayer08} three other kinds of compatibilities, the first of which corresponds to the interchangeability of the structures, the second one is a part of the matching compatibility and the third one is the total compatibility. Algebras with two matching associative products (called matching dialgebras) and their connection to other classes of algebras were studied in~\cite{zbg12}, while totally compatible associative products and the corresponding operads were investigated in~\cite{zbg13,Zhang13}. The notions of compatibility, matching compatibility and total compatibility were generalized to families of specific algebras (including the associative ones) in~\cite{zgg20} and to families of algebras over an arbitrary (unary binary quadratic/cubic) operad in~\cite{zgg24}. Finally, even more general matching compatibilities were introduced and studied in~\cite{zgg23}.
	
	Observe that, given an associative algebra $(A,\cdot)$, the compatible (with $\cdot$) products on $A$ are exactly the associative Hochschild $2$-cocycles of $A$ with values in $A$. However, even knowing $HH^2(A,A)$, it is difficult to characterize the associativity of the corresponding $2$-cocycles in simple terms (for example, see the case $HH^2(A,A)={0}$ in~\cite{Odesskii-Sokolov06}). Nevertheless, the description of matching and interchangeable products on $A$ seems to be a manageable task. Moreover, it turns out that for unital associative algebras such structures admit simple descriptions. In the non-unital case there are still interesting algebras to consider.
	
	In \cref{prelim} we specify the definitions of $\sg$-matching ($\sg\in S_2$), interchangeable and total compatibility to the case of a pair of associative products and give examples showing the difference between the compatibilities. As often happens, nilpotent algebras are a good source of such examples. The case of unital products is considered in \cref{sec-unital} (see \cref{id-match-unital-algebra,comp-prod-unital-algebra}) as a motivation for \cref{sec-idemp-alg}, where we prove some general results on idempotent algebras (see \cref{interchangeable-is-totally-comp-A^2=A,(12)-matching-is-totally-comp-left-right-unit}) and proceed to more specific classes of such algebras: the semigroup algebras of rectangular bands and algebras with enough idempotents (the latter class includes some algebras of infinite matrices, path algebras and infinite direct sums of unital algebras). The descriptions of $\sg$-matching and interchangeable products on these algebras are given in \cref{id-match-rect-band,(12)-matching-rect-band,id-matching-enough-idemp,comp-prod-A-with-enough-idemp}. \cref{sec-free-alg} is devoted to free non-unital associative algebras. We first prove a general result valid for all algebras without zero divisors (\cref{(12)-match=>id-match-without-0-div}) and then treat separately the cases of non-commutative and commutative non-unital free algebras (see \cref{id-matching-free-noncomm,totally-comp-free-noncomm,comp-prod-on-K[x],comp-prod-on-K[X]-with-|X|>1}).
	\section{Definitions and preliminaries}\label{prelim}
	
	All the algebras and vector spaces will be over a field $K$ and all the products will be binary and bilinear. 
	
	\subsection{Compatible structures}
	
	Given two bilinear binary operations $\cdot_1$ and $\cdot_2$ on a vector space $V$, their \textit{sum} $\star=\cdot_1+\cdot_2$ is defined by 
	\begin{align*}
		a\star b=a\cdot_1 b+a\cdot_2 b.
	\end{align*}
	Recall that two associative products $\cdot_1$ and $\cdot_2$ on $V$ are called \textit{compatible}, if $\cdot_1+\cdot_2$ is also associative. This is equivalent to the following equality:
	\begin{align}\label{compatible-ass-identity}
		(a\cdot_1 b)\cdot_2 c + (a\cdot_2 b)\cdot_1 c = a\cdot_1(b\cdot_2 c) + a\cdot_2(b\cdot_1 c)
	\end{align}
	for all $a,b,c\in V$. As it was observed in \cite[Remark on p. 4801]{Carinena-Grabowski-Marmo2000} and can be seen directly, $\cdot_1$ and $\cdot_2$ are compatible if and only if $\cdot_1$ is a Hochschild $2$-cocycle of $(V,\cdot_2)$ with values in $V$ (or, symmetrically, $\cdot_2$ is a Hochschild $2$-cocycle of $(V,\cdot_1)$ with values in $V$). 
	
	Let us consider the following particular cases of \cref{compatible-ass-identity}. 
	\begin{defn}
		Adopting the terminology of~\cite{zgg23} we say that two associative products $\cdot_1$ and $\cdot_2$ on $V$ are 
		\begin{enumerate}
			\item \textit{$\sg$-matching} (where $\sg\in S_2=\{\id,(12)\}$), if
			\begin{align}\label{matching-ass-identity}
				(a\cdot_1 b)\cdot_2c = a\cdot_{\sg(1)}(b\cdot_{\sg(2)}c) \text{ and } (a\cdot_2b)\cdot_1 c = a\cdot_{\sg(2)}(b\cdot_{\sg(1)}c),
			\end{align}
			\item \textit{totally compatible}, if 
			\begin{align}\label{totally-comp-ass-identity}
				(a\cdot_1 b)\cdot_2 c = (a\cdot_2 b)\cdot_1 c = a\cdot_1 (b\cdot_2 c) = a\cdot_2 (b\cdot_1 c),
			\end{align}
		\end{enumerate}
		for all $a,b,c\in V$.
	\end{defn} 
	Usually $\id$-matching $\cdot_1$ and $\cdot_2$ are simply called \textit{matching} in the literature. Observe that in this case $(V,\cdot_1,\cdot_2)$ is a \textit{matching dialgebra}~\cite{zbg12}, also known as \textit{$As^{(2)}$-algebra}~\cite{Zinbiel}. We will consider one more ``compatibility-type'' condition which makes sense for not necessarily associative products. 
	\begin{defn}
		We say that two (not necessarily associative) products $\cdot_1$ and $\cdot_2$ on $V$ are \textit{interchangeable} if
		\begin{align}\label{interchangeable-ass-identity}
			(a\cdot_1 b)\cdot_2c = (a\cdot_2 b)\cdot_1 c \text{ and } a\cdot_1(b\cdot_2 c) = a\cdot_2 (b\cdot_1 c).
		\end{align}
	\end{defn}
	
	\begin{rem}\label{tot-com-equiv-cond}
		For two associative products $\cdot_1$ and $\cdot_2$ on $V$ the following are equivalent:
		\begin{enumerate}
			\item\label{cdot_1-and-cdot_2-totally-comp} $\cdot_1$ and $\cdot_2$ are totally compatible;
			\item\label{cdot_1-and-cdot_2-sg-match-for-some-sg} $\cdot_1$ and $\cdot_2$ are interchangeable and at least one of the two equalities \cref{matching-ass-identity} holds for some $\sg\in S_2$;
			\item\label{sg-match-and-inter} $\cdot_1$ and $\cdot_2$ are $\sg$-matching for some $\sg\in S_2$ and at least one of the two equalities \cref{interchangeable-ass-identity} holds;
			\item\label{cdot_1-and-cdot_2-sg-match-for-all-sg} $\cdot_1$ and $\cdot_2$ are $\sg_1$-matching for some $\sg_1\in S_2$ and at least one of the two equalities \cref{matching-ass-identity} holds for $\sg\ne\sg_1$;
		\end{enumerate}
		If $\ch(K)\ne 2$, then each of the conditions \cref{cdot_1-and-cdot_2-totally-comp,cdot_1-and-cdot_2-sg-match-for-some-sg,sg-match-and-inter,cdot_1-and-cdot_2-sg-match-for-all-sg} is also equivalent to 
		\begin{enumerate}
			\setcounter{enumi}{4}
			\item $\cdot_1$ and $\cdot_2$ are interchangeable and compatible.
		\end{enumerate} 
	\end{rem}

	Let $(A,\cdot)$ be an associative algebra. By a \textit{compatible} (resp. \textit{$\sg$-matching}, \textit{interchangeable} or \textit{totally compatible}) \textit{structure} on $A$ we mean an associative product $*$ on $A$ that is compatible (resp. $\sg$-matching, interchangeable or totally compatible) with $\cdot$. 
	
	The following examples show that the classes of $\id$-matching, $(12)$-matching and interchangeable structures on $(A,\cdot)$ are different and no class is in general contained in another one (although all of them contain the totally compatible structures). 
	\begin{exm}
		Let $A$ be the $3$-dimensional nilpotent associative algebra with a basis $\{e_i\}_{i=1}^3$ and the multiplication table $e_1\cdot e_2=e_3$. 
		
		The product $*$ on $A$ given by $e_1*e_1=e_1$ and $e_1*e_2=e_2$ is clearly associative (observe that $(A,*)$ is isomorphic to the direct sum of a $1$-dimensional algebra with zero multiplication and the subalgebra of $M_2(K)$ generated by the matrix units $\{E_{11},E_{12}\}$). Then $(e_i*e_j)\cdot e_k\ne 0\iff e_i\cdot(e_j*e_k)\ne 0\iff (i,j,k)=(1,1,2)$, in which case $(e_i*e_j)\cdot e_k=e_i\cdot(e_j*e_k)=e_3$. Moreover, $(A\cdot A)*A=A*(A\cdot A)=\{0\}$. Thus, the product $*$ is $(12)$-matching with $\cdot$, but it is neither $\id$-matching nor interchangeable with $\cdot$ because $(A*A)\cdot A\ne \{0\}$.
		
		On the other hand, consider the associative product $\star$ on $A$ given by $e_1\star e_1=e_1$ and $e_1\star e_3=e_3$. Then, 
		\[
		(e_i\star e_j)\cdot e_k\ne 0
		\quad \textup{if and only if} \quad
		e_i\star(e_j\cdot e_k)\ne 0
		\quad \textup{if and only if} \quad
		(i,j,k)=(1,1,2),
		\]
		in which case $(e_i\star e_j)\cdot e_k=e_i\star(e_j\cdot e_k)=e_3$. Moreover, $(A\cdot A)\star A=A\cdot(A\star A)=\{0\}$. Thus, the product $\star$ is $\id$-matching with $\cdot$, but it is neither $(12)$-matching nor interchangeable with $\cdot$ because $(A\star A)\cdot A\ne \{0\}$.
	\end{exm}
	
	\begin{exm}
		Let $A$ be the algebra with a basis $\{e_i\}_{i=1}^6$ and multiplication table 
		\[  e_1\cdot e_2 = e_4, \: e_1\cdot e_5 = e_6, \: e_4\cdot e_3 = e_6, \: e_2\cdot e_3 = e_5.\]
		It is a nilpotent associative algebra isomorphic to the subalgebra of $M_4(K)$ generated by the matrix units $\{E_{12},E_{23},E_{34}\}$.
		
		The product $*$ on $A$ given by $e_1*e_2 = e_5$, $e_1*e_4 = e_6$ is clearly associative because $(A*A)*A=A*(A*A)=\{0\}$. Furthermore, 
		\[e_i*(e_j\cdot e_k)\ne 0\iff e_i\cdot(e_j*e_k)\ne 0\iff(i,j,k)=(1,1,2),\]
		in which case $e_i*(e_j\cdot e_k)=e_i\cdot(e_j*e_k)=e_6$. Moreover, $(A*A)\cdot A=(A\cdot A)*A=\{0\}$. Thus, $*$ is interchangeable with $\cdot$, but it is neither $\id$-matching nor $(12)$-matching with $\cdot$ because $A*(A\cdot A)\ne \{0\}$.
	\end{exm}
	
	Given two structures $*_1$ and $*_2$ on $A$ that are compatible with $\cdot$, we say that $*_1$ and $*_2$ are \textit{isomorphic}, if there exists an automorphism $\phi$ of $(A,\cdot)$ such that 
	\begin{align*}
		\phi(a*_1b)=\phi(a)*_2\phi(b)
	\end{align*}
	for all $a,b\in A$ (i.e. if $(A,\cdot,*_1)$ and $(A,\cdot,*_2)$ are isomorphic).
	
	\section{\texorpdfstring{$\sg$}{σ}-matching and interchangeable structures on unital associative algebras}\label{sec-unital}
	Although unital associative algebras are a subclass of algebras with enough idempotents considered in \cref{sec-idemp-alg}, we begin with this particular case to motivate the choice of the context for \cref{sec-idemp-alg}. We also introduce here some notions that will be used below.
	
	For any associative algebra $(A,\cdot)$ and fixed $x\in A$, one defines the \textit{mutation~\cite{ElduqueMyung} of $\cdot$ by $x$} to be the product $\cdot_x$ on $A$ given by $a\cdot_x b=a\cdot x\cdot b$ for all $a,b\in A$. The following should be well-known (cf. \cite[Formula (9)]{Carinena-Grabowski-Marmo2000} and \cite[Example 2.2]{Odesskii-Sokolov06}), but we couldn't find an explicit proof.
	\begin{lem}\label{mutation-id-matching}
		Let $(A,\cdot)$ be an associative algebra. For any $x\in A$ the product $\cdot_x$ is associative and $\id$-matching with $\cdot$.
	\end{lem}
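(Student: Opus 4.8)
The plan is to reduce every assertion in the statement to the associativity of $\cdot$ by simply unfolding the definition $a\cdot_x b=a\cdot x\cdot b$. Once the mutations are expanded, each expression becomes a single $\cdot$-word in $A$, and the two sides of every identity differ only in the placement of parentheses; associativity of $\cdot$ then makes them equal.

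First I would establish associativity of $\cdot_x$. Expanding, $(a\cdot_x b)\cdot_x c=(a\cdot x\cdot b)\cdot x\cdot c$ while $a\cdot_x(b\cdot_x c)=a\cdot x\cdot(b\cdot x\cdot c)$; by associativity of $\cdot$ both sides equal $a\cdot x\cdot b\cdot x\cdot c$, so $\cdot_x$ is associative.

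Next I would verify the $\id$-matching identities \cref{matching-ass-identity} with $\sg=\id$. For $\sg=\id$ the two products play symmetric roles, since swapping $\cdot_1$ and $\cdot_2$ merely interchanges the two displayed equalities; hence the choice of which product is $\cdot_1$ is immaterial, and I may take $\cdot_1=\cdot$ and $\cdot_2=\cdot_x$. The two required equalities then read $(a\cdot b)\cdot_x c=a\cdot(b\cdot_x c)$ and $(a\cdot_x b)\cdot c=a\cdot_x(b\cdot c)$. Expanding the mutations, the first becomes $a\cdot b\cdot x\cdot c$ on both sides and the second becomes $a\cdot x\cdot b\cdot c$ on both sides, each equality being immediate from associativity of $\cdot$.

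I do not expect any genuine obstacle here: all four identities collapse to rearranging parentheses within a single associative product. The only point deserving attention is the bookkeeping, namely tracking into which slot the fixed element $x$ is inserted on each side and confirming that the $\id$-matching relation is symmetric in its two arguments, so that no orientation question arises. I therefore anticipate the entire argument to be a short direct computation.
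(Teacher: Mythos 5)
Your proof is correct and is essentially identical to the paper's: both verify associativity of $\cdot_x$ and the two $\id$-matching identities by expanding $a\cdot_x b=a\cdot x\cdot b$ and invoking associativity of $\cdot$. The extra remark about the symmetry of the $\id$-matching condition under swapping $\cdot_1$ and $\cdot_2$ is a harmless (and accurate) bit of bookkeeping that the paper leaves implicit.
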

	\begin{proof}
		For $a,b,c\in A$ we have $(a\cdot_x b)\cdot c=a\cdot x\cdot b\cdot c=a\cdot_x(b\cdot c)$ and $(a\cdot b)\cdot_x c=a\cdot b\cdot x\cdot c=a\cdot(b\cdot_x c)$. Moreover, $(a\cdot_x b)\cdot_x c=a\cdot x\cdot b\cdot x\cdot c=a\cdot_x (b\cdot_x c)$.
	\end{proof}
	
	For unital associative algebras the converse also holds.
	\begin{prop}\label{id-match-unital-algebra}
		Let $(A,\cdot)$ be a unital associative algebra. Then the $\id$-matching structures on $A$ are exactly the mutations of $\cdot$.
	\end{prop}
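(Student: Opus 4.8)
The plan is to establish the two inclusions separately. One of them is free: by \cref{mutation-id-matching}, for every $x\in A$ the mutation $\cdot_x$ is associative and $\id$-matching with $\cdot$, so every mutation is an $\id$-matching structure on $A$. The substance of the proposition is therefore the reverse inclusion, namely that an arbitrary $\id$-matching structure $*$ on $A$ must coincide with a mutation $\cdot_x$.

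To set up the converse, I would let $*$ be associative and $\id$-matching with $\cdot$, and spell out \cref{matching-ass-identity} for $\sg=\id$ (taking $\cdot_1=\cdot$ and $\cdot_2=*$):
\begin{align*}
(a\cdot b)*c = a\cdot(b*c)\quad\text{and}\quad (a*b)\cdot c = a*(b\cdot c)\qquad(a,b,c\in A).
\end{align*}
The candidate mutating element is $x=1*1$, where $1$ denotes the unit of $(A,\cdot)$, and the goal is to verify $a*b=a\cdot x\cdot b$ for all $a,b\in A$.

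I would extract this identity by repeatedly feeding the unit into the two displayed equations. Putting $b=1$ in the second equation gives $(a*1)\cdot c=a*c$, and then setting $a=1$ yields $1*c=(1*1)\cdot c=x\cdot c$. Putting $b=1$ in the first equation gives $a*c=a\cdot(1*c)$, and substituting the previous relation produces $a*c=a\cdot(x\cdot c)=a\cdot x\cdot c=a\cdot_x c$. Hence $*=\cdot_x$ is the mutation of $\cdot$ by $x=1*1$, which completes the converse and the proof. The computation is short and the only real decision is guessing the correct element $x=1*1$; once the unit is available, each matching identity collapses and expresses $*$ directly in terms of $\cdot$, so I do not anticipate any genuine obstacle (in particular, associativity of $*$ is not even needed, since the two matching identities alone force $*$ to be a mutation).
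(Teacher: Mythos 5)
Your proposal is correct and follows essentially the same route as the paper: the forward inclusion is \cref{mutation-id-matching}, and the converse is obtained by inserting the unit into the two $\id$-matching identities to get $a*b=a\cdot((1*1)\cdot b)=a\cdot_{1*1}b$, exactly as in the paper's one-line computation. Your side remark that associativity of $*$ is not needed for this computation is also consistent with the paper's argument.
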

	\begin{proof}
		Let $1$ be the identity element of $(A,\cdot)$ and $*$ an $\id$-matching associative product on $A$. Then, for all $a,b\in A$ we have
		\[
		a*b=(a\cdot 1)*b=a\cdot(1*b)=a\cdot(1*(1\cdot b))=a\cdot((1*1)\cdot b)=a\cdot_{1*1}b.
		\qedhere
		\]
	\end{proof}
	
	Let $(A,\cdot)$ be a (not necessarily associative) algebra. Recall that the \textit{centroid} of $A$ is the space $\G(A)$ of linear maps $\vf:A\to A$ such that
	\begin{align*}
		x\cdot\vf(y)=\vf(x\cdot y)=\vf(x)\cdot y
	\end{align*}
	for all $x,y\in A$. 
	
	\begin{lem}\label{a*b=vf(ab)-tot-comp}
		Let $(A,\cdot)$ be a (not necessarily associative) algebra. Given $\vf\in\G(A)$, the product 
		\begin{align}\label{a*b=vf(a-cdot-b)}
			a*_\vf b:=\vf(a\cdot b)
		\end{align}
		is interchangeable with $\cdot$. Moreover, if $\cdot$ is associative, then $*_\vf$ is also associative and totally compatible with $\cdot$.
	\end{lem}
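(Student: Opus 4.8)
The plan is to prove everything by direct substitution, using only the two defining identities of the centroid, namely $\vf(x)\cdot y=\vf(x\cdot y)$ and $x\cdot\vf(y)=\vf(x\cdot y)$ for all $x,y\in A$. These let one pull $\vf$ out of a product through either factor, and I expect the interchangeability part to require no associativity of $\cdot$ whatsoever.

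First I would verify interchangeability, i.e.\ the two identities \cref{interchangeable-ass-identity} with $\cdot_1=*_\vf$ and $\cdot_2=\cdot$. Taking $x=a\cdot b$, $y=c$ in the first centroid identity gives
\[
(a*_\vf b)\cdot c=\vf(a\cdot b)\cdot c=\vf\bigl((a\cdot b)\cdot c\bigr)=(a\cdot b)*_\vf c,
\]
and taking $x=a$, $y=b\cdot c$ in the second gives
\[
a*_\vf(b\cdot c)=\vf\bigl(a\cdot(b\cdot c)\bigr)=a\cdot\vf(b\cdot c)=a\cdot(b*_\vf c).
\]
This settles the first claim for an arbitrary, not necessarily associative, product $\cdot$.

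Next, assuming $\cdot$ associative, I would establish associativity of $*_\vf$ by reducing each triple product to $\vf^2$ evaluated on an ordinary $\cdot$-product: applying the centroid relations once inside and once outside,
\[
(a*_\vf b)*_\vf c=\vf\bigl(\vf(a\cdot b)\cdot c\bigr)=\vf^2\bigl((a\cdot b)\cdot c\bigr),
\qquad
a*_\vf(b*_\vf c)=\vf\bigl(a\cdot\vf(b\cdot c)\bigr)=\vf^2\bigl(a\cdot(b\cdot c)\bigr),
\]
and these coincide since $(a\cdot b)\cdot c=a\cdot(b\cdot c)$. Total compatibility then follows at once: by the computations above, the four expressions in \cref{totally-comp-ass-identity} equal $\vf((a\cdot b)\cdot c)$, $\vf((a\cdot b)\cdot c)$, $\vf(a\cdot(b\cdot c))$ and $\vf(a\cdot(b\cdot c))$, which are all equal by associativity of $\cdot$.

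I do not anticipate a genuine obstacle: the whole argument is a short, mechanical application of the two centroid relations, the only bookkeeping being which relation to apply on which factor. The point worth emphasizing is that associativity of $\cdot$ is invoked \emph{only} to identify $(a\cdot b)\cdot c$ with $a\cdot(b\cdot c)$, which is exactly why interchangeability survives in the non-associative setting whereas the associativity of $*_\vf$ and the stronger total compatibility do not. Total compatibility could alternatively be read off from \cref{tot-com-equiv-cond}, once interchangeability and a single matching identity are in hand, but the direct route above is equally quick and needs no hypothesis beyond associativity of $\cdot$.
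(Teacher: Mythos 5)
Your proof is correct, and the interchangeability computation is word-for-word the paper's own: both arguments pull $\vf$ through the left factor to get $(a*_\vf b)\cdot c=\vf((a\cdot b)\cdot c)=(a\cdot b)*_\vf c$ and through the right factor to get $a*_\vf(b\cdot c)=a\cdot(b*_\vf c)$, with no associativity used. Where you genuinely diverge is the second statement: the paper disposes of the associativity of $*_\vf$ and the total compatibility by citing Proposition 2.7 of Zhang--Bai--Guo, whereas you verify both directly --- reducing $(a*_\vf b)*_\vf c$ and $a*_\vf(b*_\vf c)$ to $\vf^2$ applied to the two $\cdot$-associations, and observing that all four expressions in \cref{totally-comp-ass-identity} collapse to $\vf((a\cdot b)\cdot c)=\vf(a\cdot(b\cdot c))$. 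Your route costs three extra lines but makes the lemma self-contained and makes transparent exactly where associativity of $\cdot$ enters; the paper's route is shorter on the page but outsources the content. There is no gap in your argument.
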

	\begin{proof}
		For all $a,b,c\in A$ we have 
		\begin{align*}
			(a*_\vf b)\cdot c&=\vf(a\cdot b)\cdot c=\vf((a\cdot b)\cdot c)=(a\cdot b)*_\vf c \text { and }\\
			a*_\vf (b\cdot c)&=\vf(a\cdot (b\cdot c))=a\cdot \vf(b\cdot c)=a\cdot (b*_\vf c).
		\end{align*}
		
		The second statement follows from \cite[Proposition 2.7]{zbg13}.
	\end{proof}
	
	\begin{defn}
		The product of the form \cref{a*b=vf(a-cdot-b)} is said to be \textit{determined} by $\vf\in\G(A)$.
	\end{defn}
	
	In the associative case any element $c$ of the \textit{center} $C(A)$ of $A$ defines $\vf\in\G(A)$ by means of $\vf(a)=c\cdot a$. If, moreover, $A$ is unital, then this gives an isomorphism of $K$-spaces $C(A)\cong\G(A)$, whose inverse maps $\vf\in\G(A)$ to $\vf(1)$. Thus, we have the following.
	\begin{lem}
		Let $(A,\cdot)$ be a unital associative algebra. Then the products $*$ determined by elements of $\G(A)$ are exactly the mutations of $\cdot$ by elements of $C(A)$.
	\end{lem}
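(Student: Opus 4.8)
The plan is to exploit the $K$-space isomorphism $C(A)\cong\G(A)$ recalled just before the statement, whose forward map sends $c\in C(A)$ to the centroid element $a\mapsto c\cdot a$ and whose inverse sends $\vf\in\G(A)$ to $\vf(1)$. The whole argument then reduces to the elementary observation that centrality of $c$ lets one move $c$ from the middle slot of $a\cdot c\cdot b$ to the front $c\cdot a\cdot b$, which is exactly what turns a mutation into a product determined by left multiplication by $c$.

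First I would show that each mutation by a central element is determined by a centroid element. Fixing $c\in C(A)$, let $\vf_c\in\G(A)$ be given by $\vf_c(a)=c\cdot a$ (which lies in $\G(A)$ by the preceding remark). Then for all $a,b\in A$,
\begin{align*}
a\cdot_c b=a\cdot c\cdot b=c\cdot a\cdot b=\vf_c(a\cdot b)=a*_{\vf_c}b,
\end{align*}
so $\cdot_c=*_{\vf_c}$, and every mutation by a central element appears among the products determined by centroid elements.

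Conversely, I would start from $\vf\in\G(A)$ and set $c=\vf(1)$. By the inverse isomorphism $c\in C(A)$ and $\vf(a)=c\cdot a$ for all $a$, whence
\begin{align*}
a*_\vf b=\vf(a\cdot b)=c\cdot a\cdot b=a\cdot c\cdot b=a\cdot_c b,
\end{align*}
using centrality of $c$ in the last step; thus $*_\vf=\cdot_c$ with $c=\vf(1)\in C(A)$. These two computations identify the two families of products, giving the claimed equality. There is no genuine obstacle here: the only place the unital hypothesis intervenes is in guaranteeing that every centroid element is left multiplication by the single central element $\vf(1)$, and this surjectivity is precisely what the already-established isomorphism $C(A)\cong\G(A)$ supplies; the remaining steps are just the centrality bookkeeping displayed above.
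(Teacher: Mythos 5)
Your argument is correct and follows exactly the route the paper intends: the lemma is stated there without proof as an immediate consequence of the isomorphism $C(A)\cong\G(A)$ (with $c\mapsto(a\mapsto c\cdot a)$ and inverse $\vf\mapsto\vf(1)$), and your two displayed computations simply make explicit the centrality bookkeeping that identifies $\cdot_c$ with $*_{\vf_c}$ in both directions.
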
 
	
	\begin{prop}\label{comp-prod-unital-algebra}
		Let $(A,\cdot)$ be a unital associative algebra and $*$ be an associative product on $A$. Then the following are equivalent:
		\begin{enumerate}
			\item\label{*-(12)-matching-with-1} $*$ is $(12)$-matching with $\cdot$;
			\item\label{*-interchangeable-with-1} $*$ is interchangeable with $\cdot$;
			\item\label{*-totally-compatible-with-1} $*$ is totally compatible with $\cdot$;
			\item\label{*-mutation-with-1} $*$ is determined by an element of $\G(A)\cong C(A)$.
		\end{enumerate}
	\end{prop}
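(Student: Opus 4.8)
The plan is to run a cycle of implications in which the only substantial arrows are \cref{*-(12)-matching-with-1}$\Rightarrow$\cref{*-mutation-with-1} and \cref{*-interchangeable-with-1}$\Rightarrow$\cref{*-mutation-with-1}. The implication \cref{*-mutation-with-1}$\Rightarrow$\cref{*-totally-compatible-with-1} is exactly the second statement of \cref{a*b=vf(ab)-tot-comp}, while \cref{*-totally-compatible-with-1}$\Rightarrow$\cref{*-(12)-matching-with-1} and \cref{*-totally-compatible-with-1}$\Rightarrow$\cref{*-interchangeable-with-1} are immediate from comparing \cref{totally-comp-ass-identity} with \cref{matching-ass-identity,interchangeable-ass-identity}: total compatibility forces all four triple products to coincide, which in particular gives both $\sg=(12)$ matching equalities and both interchangeability equalities. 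These arrows chain as $\text{\cref{*-mutation-with-1}}\Rightarrow\text{\cref{*-totally-compatible-with-1}}\Rightarrow\text{\cref{*-(12)-matching-with-1}}$ and $\text{\cref{*-totally-compatible-with-1}}\Rightarrow\text{\cref{*-interchangeable-with-1}}$, so once the two remaining implications into \cref{*-mutation-with-1} are in place, all four conditions are equivalent. Thus the real task is to feed the identity element $1$ of $(A,\cdot)$ into the defining identities of \cref{*-(12)-matching-with-1} and \cref{*-interchangeable-with-1} and recover a centroid element.

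For \cref{*-(12)-matching-with-1}$\Rightarrow$\cref{*-mutation-with-1}, writing $\cdot_1=\cdot$, $\cdot_2=*$ in \cref{matching-ass-identity} and putting $a=1$ in the first identity gives $b*c=1*(b\cdot c)$; hence, setting $\psi(x):=1*x$, one obtains $a*b=\psi(a\cdot b)$ for all $a,b$. Putting $a=1$ in the second identity of \cref{matching-ass-identity} gives $\psi(b)\cdot c=b*c=\psi(b\cdot c)$, and putting $c=1$ there gives $a*b=a\cdot(b*1)=a\cdot\psi(b)$, whence $\psi(a\cdot b)=a\cdot\psi(b)$. These are precisely the two centroid equations, so $\psi\in\G(A)$ and $*$ is the product determined by $\psi$.

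For \cref{*-interchangeable-with-1}$\Rightarrow$\cref{*-mutation-with-1}, set $u:=1*1$. From the first identity of \cref{interchangeable-ass-identity} with $a=1$ we get $b*c=(1*b)\cdot c$, and the further specialization $b=1$ gives $1*c=u\cdot c$, so that $b*c=u\cdot b\cdot c$; symmetrically, the second identity of \cref{interchangeable-ass-identity} with $c=1$ and then $b=1$ yields $a*b=a\cdot b\cdot u$. Comparing the two expressions and setting one argument equal to $1$ forces $u\cdot a=a\cdot u$ for all $a\in A$, i.e.\ $u\in C(A)$; then $a*b=u\cdot(a\cdot b)$ is the product determined by the centroid map $x\mapsto u\cdot x$, under the isomorphism $C(A)\cong\G(A)$ described above.

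I do not expect a genuine obstacle here: the whole content is in choosing the right specializations of $1$, and unitality is what makes each associator-type identity collapse. The point that needs care is that neither \cref{*-(12)-matching-with-1} nor \cref{*-interchangeable-with-1} forces total compatibility without a unit---the nilpotent examples in \cref{prelim} exhibit $(12)$-matching and interchangeable structures that fail to be totally compatible---so each argument must really invoke $1$. Concretely, I would make sure to derive \emph{both} centroid equations in the matching case (and \emph{both} one-sided forms of $*$ in the interchangeable case) before concluding, since establishing only one would merely recover a mutation rather than pin down a central, and hence centroid, element.
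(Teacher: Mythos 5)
Your proposal is correct and follows essentially the same route as the paper: the cycle of implications is identical, and both substantial arrows are proved by specializing the unit $1$ into the defining identities, showing that $1*1$ is central (equivalently, that $x\mapsto 1*x$ lies in $\G(A)$) and that $a*b=a\cdot(1*1)\cdot b$. The only cosmetic difference is that you phrase the $(12)$-matching case directly in terms of the centroid map $\psi(x)=1*x$ rather than the central element $1*1$, which amounts to the same computation.
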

	\begin{proof}
		\cref{*-(12)-matching-with-1}$\impl$\cref{*-mutation-with-1}. Let $*$ be $(12)$-matching with $\cdot$. Then for all $a\in A$ we have 
		\[1*a=(1*a)\cdot 1=1\cdot (a*1)=a*1.\] 
		It follows that 
		\[(1*1)\cdot a=1\cdot (1*a)=1*a=a*1=(a*1)\cdot 1=a\cdot(1*1),\]
		so $1*1\in C(A)$. Using these equalities, we also have 
		\[a*b=(a*b)\cdot 1=a\cdot (b*1)=a\cdot ((1*1)\cdot b)=a\cdot_{1*1} b\] 
		for all $a,b\in A$.
		
		\cref{*-interchangeable-with-1}$\impl$\cref{*-mutation-with-1}. Let $*$ be interchangeable with $\cdot$. Then for all $a\in A$ we have   
		\[1*a=(1\cdot 1)*a=(1*1)\cdot a, \text{ and } a*1=a*(1\cdot 1)=a\cdot(1*1).\] Now, $1*a=(1*a)\cdot 1=(1\cdot a)*1=a*1$, whence $(1*1)\cdot a=a\cdot(1*1)$, i.e. $1*1\in C(A)$. Thus, 
		\[a*b=(a\cdot 1)*b=(a*1)\cdot b=a\cdot(1*1)\cdot b=a\cdot_{1*1} b\] for all $a,b\in A$.
		
		The implications \cref{*-mutation-with-1}$\impl$\cref{*-totally-compatible-with-1}, \cref{*-totally-compatible-with-1}$\impl$\cref{*-interchangeable-with-1} and \cref{*-totally-compatible-with-1}$\impl$\cref{*-(12)-matching-with-1} are obvious.
	\end{proof}
	
	\begin{cor}\label{mutation-by-non-central-element}
		Let $A$ be a non-commutative unital associative algebra. Then there exist products on $A$ that are $\id$-matching with $\cdot$, but not totally compatible with $\cdot$.
	\end{cor}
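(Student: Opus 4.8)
The plan is to use the mutations from \cref{mutation-id-matching} as a ready supply of $\id$-matching structures and to show that, on a unital non-commutative algebra, a mutation by a non-central element cannot be totally compatible. First I would recall that by \cref{mutation-id-matching} the product $\cdot_x$ is associative and $\id$-matching with $\cdot$ for every $x\in A$, so it remains only to exhibit one such $x$ for which $\cdot_x$ fails to be totally compatible.

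Next I would pin down exactly which mutations are totally compatible. By \cref{comp-prod-unital-algebra} a product on a unital algebra is totally compatible with $\cdot$ if and only if it is determined by an element of $\G(A)\cong C(A)$, and by the unlabeled lemma immediately preceding \cref{comp-prod-unital-algebra} the products determined by elements of $\G(A)$ are precisely the mutations $\cdot_c$ with $c\in C(A)$. Thus the totally compatible mutations are exactly those by central elements. To turn this into a condition on $x$ itself, I would use that the assignment $x\mapsto\cdot_x$ is injective on a unital algebra: from $a\cdot x\cdot b=a\cdot y\cdot b$ for all $a,b$, setting $a=b=1$ gives $x=y$. Hence $\cdot_x$ is totally compatible if and only if $x\in C(A)$.

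Finally, since $A$ is non-commutative we have $C(A)\subsetneq A$, so there exists $x\in A\setminus C(A)$; the corresponding mutation $\cdot_x$ is then $\id$-matching with $\cdot$ by \cref{mutation-id-matching} but not totally compatible with $\cdot$, which is the desired conclusion. I do not expect a genuine obstacle here, as the statement falls out by combining the earlier characterizations; the only point needing care is the injectivity of the mutation map, which rests on unitality and guarantees that a non-central $x$ cannot accidentally yield the same product as a mutation by some central element.
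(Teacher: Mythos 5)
Your proposal is correct and follows essentially the same route as the paper: take a non-central $x$, invoke \cref{mutation-id-matching} for the $\id$-matching part, and use \cref{comp-prod-unital-algebra} to see that total compatibility would force $\cdot_x=\cdot_c$ for some $c\in C(A)$, which evaluating at $a=b=1$ rules out. Your "injectivity of $x\mapsto\cdot_x$" is exactly the paper's step $x=1\cdot_x 1=1\cdot_c 1=c$, so there is no substantive difference.
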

	\begin{proof}
		Let $x$ be a non-central element of $A$. Then $\cdot_x$ is $\id$-matching with $\cdot$ by \cref{mutation-id-matching}. If $\cdot_x$ was totally compatible with $\cdot$, then by \cref{comp-prod-unital-algebra} there would be $c\in C(A)$ such that $\cdot_x=\cdot_c$, whence $x=1\cdot_x 1=1\cdot_c 1=c$, a contradiction.
	\end{proof}
	
	\begin{rem}
		Let $(A,\cdot)$ be a unital associative algebra and $x,y\in A$. Then the structures $\cdot_x$ and $\cdot_y$ are isomorphic if and only if there is $\phi\in\Aut(A)$ such that $\phi(x)=y$.
		
		For, given $\phi\in\Aut(A)$, one has \[\phi(a\cdot_xb)=\phi(a)\cdot_y\phi(b)\iff \phi(a)\cdot\phi(x)\cdot\phi(b)=\phi(a)\cdot y\cdot \phi(b).\] The latter holds for all $a,b\in A$ if and only if $\phi(x)=y$ (take $a=b=1$).
	\end{rem}
	
	\begin{rem}
		Whenever $\G(A)=K$, there are only two non-isomorphic structures of the form \cref{a*b=vf(a-cdot-b)}: the original product $\cdot$ and the zero product.
		
		For, in this case $a*_\vf b=\lb(a\cdot b)$ for some $\lb\in K$, so, if $\lb\ne 0$, the map $\lb\m\id$ is an isomorphism between $\cdot$ and $*_\vf$.
	\end{rem}
	
	\begin{rem}
		For non-unital algebras the result of \cref{comp-prod-unital-algebra} may be false. For example, if $A$ is an algebra with zero multiplication, then any associative product $*$ on $A$ is totally compatible with $\cdot$ since all the mixed monomials in \cref{totally-comp-ass-identity} are zero. However, all the mutations of $\cdot$ and all the products on $A$ determined by elements of $\G(A)$ are zero.
	\end{rem}
	
	\section{\texorpdfstring{$\sg$}{σ}-matching and interchangeable structures on certain idempotent associative algebras}\label{sec-idemp-alg}
	
	In this section we are going to see to which extent the results of \cref{sec-unital} generalize to several classes of associative algebras that are in some sense close to being unital. Recall that a (not-necessarily associative) algebra $(A,\cdot)$ is said to be \textit{idempotent}\footnote{In the context of Lie algebras one prefers to use the term ``perfect'' rather than ``idempotent''.} if $A\cdot A=A$. 
	\begin{prop}\label{interchangeable-is-totally-comp-A^2=A}
		Let $(A,\cdot)$ be an idempotent associative algebra and $*$ be an associative product on $A$. Then the following are equivalent:
		\begin{enumerate}
			\item\label{*-interchangeable-A^2=A} $*$ is interchangeable with $\cdot$;
			\item\label{*-totally-compatible-A^2=A} $*$ is totally compatible with $\cdot$.
		\end{enumerate}
	\end{prop}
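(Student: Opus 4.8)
\emph{Plan.} The implication \cref{*-totally-compatible-A^2=A}$\impl$\cref{*-interchangeable-A^2=A} is immediate, since the four equalities \cref{totally-comp-ass-identity} defining total compatibility already contain the two equalities \cref{interchangeable-ass-identity} of interchangeability. So the entire content lies in the converse, and for that I would first fix notation for the four trilinear maps involved:
\[
\begin{aligned}
&T_1(a,b,c)=(a\cdot b)*c, && T_2(a,b,c)=(a*b)\cdot c,\\
&T_3(a,b,c)=a\cdot(b*c), && T_4(a,b,c)=a*(b\cdot c).
\end{aligned}
\]
In this language interchangeability is exactly $T_1=T_2$ and $T_3=T_4$, whereas total compatibility is $T_1=T_2=T_3=T_4$. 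Hence it suffices to prove a single ``crossing'' identity linking the two pairs, and I would aim for $T_3=T_2$, that is $a\cdot(b*c)=(a*b)\cdot c$.

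The main idea is to exploit $A\cdot A=A$ by decomposing the \emph{middle} argument. Writing $b=\sum_i p_i\cdot q_i$ and working termwise, I would start from $a\cdot(b*c)$ and rewrite a typical summand $a\cdot\big((p_i\cdot q_i)*c\big)$ in three moves: the first interchangeability relation turns $(p_i\cdot q_i)*c$ into $(p_i*q_i)\cdot c$; associativity of $\cdot$ then regroups $a\cdot\big((p_i*q_i)\cdot c\big)$ as $\big(a\cdot(p_i*q_i)\big)\cdot c$; and finally the second interchangeability relation replaces $a\cdot(p_i*q_i)$ by $a*(p_i\cdot q_i)$. Summing over $i$ and using bilinearity of $*$ collapses $\sum_i a*(p_i\cdot q_i)$ back to $a*b$, yielding $a\cdot(b*c)=(a*b)\cdot c$. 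Combined with $T_1=T_2$ and $T_3=T_4$, this gives all four expressions equal, which is total compatibility.

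I expect the only delicate point to be the choice of which argument to decompose. Decomposing the middle $b$ is precisely what lets the factor $*$ be ``transported'' from the right parenthesisation to the left through the two interchangeability relations; decomposing $a$ or $c$ instead produces rewritings that merely loop back to the starting expression. Everything else is routine termwise bookkeeping over the finite sum supplied by idempotency, applied to each $p_i\cdot q_i$ and recombined by bilinearity of both products, so that the conclusion does not depend on the particular decomposition of $b$. It is worth noting that associativity of $*$ is not needed in this direction: the argument uses only associativity of $\cdot$, the two interchangeability relations, and $A\cdot A=A$.
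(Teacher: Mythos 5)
Your proposal is correct and follows essentially the same route as the paper: the paper also decomposes the middle argument $b=b_1\cdot b_2$, applies the first interchangeability relation, reassociates under $\cdot$, and applies the second interchangeability relation to obtain $a\cdot(b*c)=(a*b)\cdot c$, then concludes via \cref{tot-com-equiv-cond}. Your observation that associativity of $*$ is not used in this direction is accurate and consistent with the paper's argument.
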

	\begin{proof}
		We only need to prove \cref{*-interchangeable-A^2=A}$\impl$\cref{*-totally-compatible-A^2=A}. Let $*$ be interchangeable with $\cdot$. Then, for all $a,b,c\in A$ with $b=b_1\cdot b_2$ we have \[a\cdot(b*c)=a\cdot((b_1\cdot b_2)*c)=a\cdot((b_1*b_2)\cdot c)=(a\cdot(b_1*b_2))\cdot c=(a*(b_1\cdot b_2))\cdot c=(a*b)\cdot c.\] Since $A\cdot A=A$, then by linearity $a\cdot(b*c)=(a*b)\cdot c$ for all $a,b,c\in A$. Thus, $*$ is  totally compatible with $\cdot$ by \cref{tot-com-equiv-cond}\cref{cdot_1-and-cdot_2-sg-match-for-some-sg}.
	\end{proof}
	
	It is natural to ask if one can replace interchangeable products by $\sg$-matching ones in \cref{interchangeable-is-totally-comp-A^2=A}. For $\sg=\id$ the answer is ``no'' even in the case of unital algebras, as we saw in \cref{mutation-by-non-central-element}. For $\sg=(12)$ there are classes of idempotent algebras containing the unital ones for which the answer is positive and those for which it is negative. We first point out a class admitting the positive answer.
	
	\begin{prop}\label{(12)-matching-is-totally-comp-left-right-unit}
		Let $(A,\cdot)$ be an associative algebra with a left or right unit and $*$ be an associative product on $A$. Then the following are equivalent:
		\begin{enumerate}
			\item\label{*-(12)-matching-left-unit} $*$ is $(12)$-matching with $\cdot$;
			\item\label{*-totally-compatible-left-unit} $*$ is totally compatible with $\cdot$.
		\end{enumerate}
	\end{prop}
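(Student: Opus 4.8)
The implication \cref{*-totally-compatible-left-unit}$\impl$\cref{*-(12)-matching-left-unit} is immediate, since total compatibility \cref{totally-comp-ass-identity} forces in particular $(a\cdot b)*c=a*(b\cdot c)$ and $(a*b)\cdot c=a\cdot(b*c)$, which are exactly the two $(12)$-matching identities. So the plan is to prove \cref{*-(12)-matching-left-unit}$\impl$\cref{*-totally-compatible-left-unit}. By \cref{tot-com-equiv-cond}\cref{sg-match-and-inter} it suffices to show that, on top of the $(12)$-matching identities
\[
(a\cdot b)*c=a*(b\cdot c)\quad\text{and}\quad (a*b)\cdot c=a\cdot(b*c),
\]
at least one of the interchangeable identities $(a\cdot b)*c=(a*b)\cdot c$ or $a\cdot(b*c)=a*(b\cdot c)$ holds. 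The difficulty is that the two $(12)$-matching identities split the four products of \cref{totally-comp-ass-identity} into two pairs that are each internally equal but are not linked to one another; the interchangeable identities are precisely the missing cross-links, and the one-sided unit is the tool I will use to supply one of them.

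Suppose first that $(A,\cdot)$ has a left unit $e$. The key observation is that the first $(12)$-matching identity, together with $e\cdot a=a$, rewrites $*$ as a twist of $\cdot$: namely $a*b=(e\cdot a)*b=e*(a\cdot b)$. I would then feed this into the second $(12)$-matching identity. Substituting gives $(a*b)\cdot c=(e*(a\cdot b))\cdot c$, and applying $(a*b)\cdot c=a\cdot(b*c)$ with first argument $e$ turns this into $e\cdot\big((a\cdot b)*c\big)$, which collapses to $(a\cdot b)*c$ because $e$ is a left unit. This yields the interchangeable identity $(a*b)\cdot c=(a\cdot b)*c$, and \cref{tot-com-equiv-cond}\cref{sg-match-and-inter} finishes this case.

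The right-unit case is the mirror image. If $f$ is a right unit, then the same first $(12)$-matching identity now gives $b*c=b*(c\cdot f)=(b\cdot c)*f$; substituting into $a\cdot(b*c)$ and applying the second $(12)$-matching identity with last argument $f$ produces $\big(a*(b\cdot c)\big)\cdot f=a*(b\cdot c)$, so that $a\cdot(b*c)=a*(b\cdot c)$, the other interchangeable identity. Again \cref{tot-com-equiv-cond}\cref{sg-match-and-inter} applies.

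The only place that requires care is matching the labels of the variables to the correct instance of each $(12)$-matching identity; no step uses more than the two matching identities and the defining property of the one-sided unit (in particular, associativity of $*$ enters only through the invocation of \cref{tot-com-equiv-cond}). I expect no genuine obstacle beyond this bookkeeping, the substantive idea being the rewriting $a*b=e*(a\cdot b)$ (resp. $a*b=(a\cdot b)*f$) that lets the unit be absorbed after the second matching identity is applied.
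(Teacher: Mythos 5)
Your proof is correct and follows essentially the same route as the paper's: insert the one-sided unit, shuffle using the $(12)$-matching identities, strip the unit off again, and close the argument by citing \cref{tot-com-equiv-cond}. The only (immaterial) difference is that you derive one of the interchangeable identities and invoke clause \cref{sg-match-and-inter}, whereas the paper derives the $\id$-matching identity $(a*b)\cdot c=a*(b\cdot c)$ and invokes clause \cref{cdot_1-and-cdot_2-sg-match-for-all-sg}.
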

	\begin{proof}
		We only need to prove \cref{*-(12)-matching-left-unit}$\impl$\cref{*-totally-compatible-left-unit}. Assume that $A$ has a left unit $e$. Let $*$ be $(12)$-matching with $\cdot$. Then for all $a,b,c\in A$ we have 
		\begin{align*}
			(a*b)\cdot c
			&=e\cdot((a*b)\cdot c)\\
			&=(e\cdot(a*b))\cdot c\\
			&=((e*a)\cdot b)\cdot c\\
			&=(e*a)\cdot(b\cdot c)\\
			&=e\cdot(a*(b\cdot c))\\
			&=a*(b\cdot c).
		\end{align*}
		It follows that $*$ is totally compatible with $\cdot$ by \cref{tot-com-equiv-cond}\cref{cdot_1-and-cdot_2-sg-match-for-all-sg}. If $A$ has a right unit $e$, then we symmetrically have 
		\begin{align*}
			a\cdot(b*c)
			&=(a\cdot(b*c))\cdot e\\
			&=a\cdot((b*c)\cdot e)\\
			&=a\cdot(b\cdot(c*e))\\
			&=(a\cdot b)\cdot(c*e)\\
			&=((a\cdot b)*c)\cdot e=(a\cdot b)*c.
			\qedhere
		\end{align*}
	\end{proof}
	
	\subsection{The semigroup algebra of a rectangular band}\label{sec-rect-band}
	
	Given arbitrary non-empty sets $I$ and $J$, one easily sees that $S:=\{e_{ij}\mid (i,j)\in I\times J\}$ is a semigroup under the multiplication 
	\begin{align}\label{e_ij.e_kl=e_il}
		e_{ij}\cdot e_{kl}=e_{il}
	\end{align}
	for all $(i,j),(k,l)\in I\times J$. It is called \textit{a rectangular band}~\cite{Clifford-Preston-1}, and it is a classical example of a semigroup in which every element is idempotent. Let $(A,\cdot)$ be the semigroup $K$-algebra of $S$. Thus, $A$ is an associative algebra admitting a basis consisting of the idempotents $e_{ij}, (i,j)\in I\times J$. We are going to describe the $\sg$-matching structures on $(A,\cdot)$.
	
	\begin{prop}\label{id-match-rect-band}
		The $\id$-matching structures on $(A,\cdot)$ are exactly the products $*$ of the form
		\begin{align}\label{e_ij*e_kl=lb_jk.e_il}
			e_{ij}*e_{kl}=\lb_{jk} e_{il},
		\end{align}
		where $\lb_{jk}\in K$ and $(i,j),(k,l)\in I\times J$.
	\end{prop}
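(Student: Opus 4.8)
The plan is to prove the two inclusions. For the easy direction I would simply verify on basis elements, using \cref{e_ij.e_kl=e_il}, that any product defined by $e_{ij}*e_{kl}=\lb_{jk}e_{il}$ is associative and satisfies both halves of \cref{matching-ass-identity}. Associativity follows from $(e_{ij}*e_{kl})*e_{mn}=\lb_{jk}\lb_{lm}e_{in}=e_{ij}*(e_{kl}*e_{mn})$, while $(e_{ij}\cdot e_{kl})*e_{mn}=\lb_{lm}e_{in}=e_{ij}\cdot(e_{kl}*e_{mn})$ and $(e_{ij}*e_{kl})\cdot e_{mn}=\lb_{jk}e_{in}=e_{ij}*(e_{kl}\cdot e_{mn})$ give the two $\id$-matching identities. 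This is a purely mechanical check.

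For the converse, suppose $*$ is $\id$-matching with $\cdot$, so that both $(a\cdot b)*c=a\cdot(b*c)$ and $(a*b)\cdot c=a*(b\cdot c)$ hold. The structural feature I would exploit is that in a rectangular band every basis element factors as $e_{ij}=e_{ip}\cdot e_{qj}$ for \emph{arbitrary} $p\in J$ and $q\in I$. Applying the identity $(a\cdot b)*c=a\cdot(b*c)$ in the form $e_{ij}*e_{kl}=(e_{ip}\cdot e_{qj})*e_{kl}=e_{ip}\cdot(e_{qj}*e_{kl})$ and using $e_{ip}\cdot e_{mn}=e_{in}$ forces every basis vector occurring in $e_{ij}*e_{kl}$ to have first index $i$, so $e_{ij}*e_{kl}\in\Span\{e_{in}\mid n\in J\}$. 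Symmetrically, factoring $e_{kl}=e_{ks}\cdot e_{rl}$ and applying $(a*b)\cdot c=a*(b\cdot c)$ as $e_{ij}*e_{kl}=(e_{ij}*e_{ks})\cdot e_{rl}$ forces every such vector to have last index $l$, so $e_{ij}*e_{kl}\in\Span\{e_{ml}\mid m\in I\}$. Intersecting the two supports yields $e_{ij}*e_{kl}=\mu_{ij,kl}\,e_{il}$ for some scalar $\mu_{ij,kl}\in K$.

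It then remains to show $\mu_{ij,kl}$ depends only on $j$ and $k$, and re-running the same two identities on this reduced form does exactly that. From $e_{ij}*e_{kl}=e_{ip}\cdot(e_{qj}*e_{kl})=\mu_{qj,kl}\,e_{il}$ I read off $\mu_{ij,kl}=\mu_{qj,kl}$ for all $q$, so $\mu$ is independent of its first index; from $e_{ij}*e_{kl}=(e_{ij}*e_{ks})\cdot e_{rl}=\mu_{ij,ks}\,e_{il}$ I get $\mu_{ij,kl}=\mu_{ij,ks}$ for all $s$, so $\mu$ is independent of its last index. Hence $\mu_{ij,kl}=\lb_{jk}$ depends only on $j,k$, which is the asserted form \cref{e_ij*e_kl=lb_jk.e_il}.

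The main obstacle is the support-pinning step: the crucial observation is that the rectangular-band factorization $e_{ij}=e_{ip}\cdot e_{qj}$, valid for all $p,q$ by \cref{e_ij.e_kl=e_il}, is precisely what allows the two matching identities to ``sandwich'' the product $e_{ij}*e_{kl}$ and collapse its support down to the single vector $e_{il}$. Once this is achieved, the scalar analysis is immediate; I also note that the associativity of $*$ is never used in the converse, as the two $\id$-matching identities alone determine the form.
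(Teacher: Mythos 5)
Your proof is correct and follows essentially the same route as the paper's: the paper pins the support by writing $e_{ij}*e_{kl}=e_{ij}\cdot(e_{ij}*e_{kl})\cdot e_{kl}\in Ke_{il}$ via the idempotency $e_{ij}=e_{ij}\cdot e_{ij}$, and then derives the independence of $\mu_{ij,kl}$ from $i$ and $l$ by the same left/right multiplication computations you use. Your use of the general factorization $e_{ij}=e_{ip}\cdot e_{qj}$ together with an intersection-of-supports argument, in place of the one-step sandwich by idempotents, is only a cosmetic variation.
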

	\begin{proof}
		Let $*$ be $\id$-matching with $\cdot$. Then for all $(i,j),(k,l)\in I\times J$ we have 
		\[e_{ij}*e_{kl}=(e_{ij}\cdot e_{ij})*e_{kl}=e_{ij}\cdot (e_{ij}*e_{kl})=e_{ij}\cdot (e_{ij}*(e_{kl}\cdot e_{kl}))=e_{ij}\cdot (e_{ij}*e_{kl})\cdot e_{kl},\]
		which equals $\mu_{ij,kl}e_{il}$ for some $\mu_{ij,kl}\in K$ by \cref{e_ij.e_kl=e_il}. Moreover, for any $(p,q)\in I\times J$ we have 
		\[\mu_{ij,kl}e_{iq}=(e_{ij}*e_{kl})\cdot e_{pq}=e_{ij}*(e_{kl}\cdot e_{pq})=e_{ij}*e_{kq}=\mu_{ij,kq}e_{iq},\] whence $\mu_{ij,kl}=\mu_{ij,kq}$. Similarly, \[\mu_{ij,kl}e_{pl}=e_{pq}\cdot(e_{ij}*e_{kl})=(e_{pq}\cdot e_{ij})*e_{kl}=e_{pj}*e_{kl}=\mu_{pj,kl}e_{pl},\] whence $\mu_{ij,kl}=\mu_{pj,kl}$. Thus, $\mu_{ij,kl}$ does not depend on $i$ and $l$, so denoting it by $\lb_{jk}$, we get \cref{e_ij*e_kl=lb_jk.e_il}.
		
		Conversely, let $*$ be given by \cref{e_ij*e_kl=lb_jk.e_il}. Then for all $(i,j),(k,l),(p,q)\in I\times J$ we have 
		\[
		(e_{ij}*e_{kl})*e_{pq}=\lb_{jk} e_{il}*e_{pq}=\lb_{jk}\lb_{lp}e_{iq}
		\quad \textup{and} \quad
		e_{ij}*(e_{kl}*e_{pq})=\lb_{lp}e_{ij}*e_{kq}=\lb_{lp}\lb_{jk}e_{iq},
		\]
		so that $*$ is associative. Since \[(e_{ij}*e_{kl})\cdot e_{pq}=\lb_{jk} e_{il}\cdot e_{pq}=\lb_{jk}e_{iq}=e_{ij}*e_{kq}=e_{ij}*(e_{kl}\cdot e_{pq})\] and similarly \[(e_{ij}\cdot e_{kl})*e_{pq}=e_{il}*e_{pq}=\lb_{lp}e_{iq}=e_{ij}\cdot \lb_{lp}e_{kq}=e_{ij}\cdot(e_{kl}*e_{pq}),\] the product $*$ is $\id$-matching with $\cdot$.
	\end{proof}
	
	\begin{rem}
		The product~\cref{e_ij*e_kl=lb_jk.e_il} is a mutation of $\cdot$ if and only if $\lb_{jk}$ does not depend on $j$ and $k$.
		
		Indeed, given $a=\sum_{p,q} a_{pq}e_{pq}$ with $a_{pq}\in K$, we have $e_{ij}\cdot_a e_{kl}=e_{ij}\cdot a\cdot e_{kl}=\lb e_{il}$, where $\lb=\sum_{p,q}a_{pq}$. Conversely, if $\lb_{jk}=\lb$ for all $j$ and $k$ in~\cref{e_ij*e_kl=lb_jk.e_il}, then $e_{ij}*e_{kl}=\lb e_{il}=e_{ij}\cdot_a\cdot e_{kl}$, where $a=\lb e_{pq}$ for some fixed arbitrary $(p,q)\in I\times J$.
	\end{rem}
	
	\begin{lem}\label{e_ij*e_kl-is-e_il*e_il}
		Let $*$ be an associative product on $A$. If $*$ is $(12)$-matching with $\cdot$, then 
		\begin{align}\label{e_ij*e_kl=e_il*e_il}
			e_{ij}*e_{kl}=e_{il}*e_{il}
		\end{align}
		for all $(i,j),(k,l)\in I\times J$.
	\end{lem}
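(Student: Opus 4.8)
The plan is to prove the lemma using only one of the two defining $(12)$-matching identities, namely the one asserting that $(a\cdot b)*c=a*(b\cdot c)$ for all $a,b,c\in A$, together with the two elementary relations of the rectangular band. First I would record these band relations in exactly the form I need: on one hand $e_{ij}\cdot e_{kl}=e_{il}$ by \cref{e_ij.e_kl=e_il}, and on the other hand the idempotent-type relation $e_{il}\cdot e_{ij}=e_{ij}$, which is again a special case of \cref{e_ij.e_kl=e_il} (the product $e_{ab}\cdot e_{cd}$ keeps the first index of the left factor and the last index of the right factor).

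The heart of the argument is a single well-chosen factorization of the left factor $e_{ij}$. I would rewrite $e_{ij}=e_{il}\cdot e_{ij}$ using the relation above, and then apply the matching identity $(a\cdot b)*c=a*(b\cdot c)$ with $a=e_{il}$, $b=e_{ij}$ and $c=e_{kl}$ to obtain
\[
e_{ij}*e_{kl}=(e_{il}\cdot e_{ij})*e_{kl}=e_{il}*(e_{ij}\cdot e_{kl})=e_{il}*e_{il},
\]
where the final equality is just $e_{ij}\cdot e_{kl}=e_{il}$. This is precisely \cref{e_ij*e_kl=e_il*e_il}, completing the proof.

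Once the right factorization is in place the computation is immediate, so the only real decision is which factorization and which of the two $(12)$-matching identities to invoke; there is no genuine obstacle. The key point to get right is that the inner indices $j$ and $k$ must be absorbed: factoring $e_{ij}$ on the left as $e_{il}\cdot e_{ij}$ lets the matching identity push $e_{il}$ outside the $*$-product while simultaneously collapsing $e_{ij}\cdot e_{kl}$ to $e_{il}$ inside it, leaving an expression that depends only on the outer indices $i$ and $l$. As a sanity check one may instead factor the right factor as $e_{kl}=e_{kl}\cdot e_{il}$ and use the same identity, giving $e_{ij}*e_{kl}=e_{ij}*(e_{kl}\cdot e_{il})=(e_{ij}\cdot e_{kl})*e_{il}=e_{il}*e_{il}$; notice that neither route uses the associativity of $*$, only the $(12)$-matching identity.
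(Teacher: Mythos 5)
Your proof is correct and takes essentially the same approach as the paper: a direct computation using the $(12)$-matching identity $(a\cdot b)*c=a*(b\cdot c)$ together with the rectangular band relations. The only difference is cosmetic --- the paper factors $e_{ij}=e_{ij}\cdot e_{ij}$ and then $e_{il}=e_{il}\cdot e_{il}$, applying the identity twice, whereas your single factorization $e_{ij}=e_{il}\cdot e_{ij}$ reaches \cref{e_ij*e_kl=e_il*e_il} in one application.
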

	\begin{proof}
		We have
		\begin{align*}
			e_{ij}*e_{kl}
			&=(e_{ij}\cdot e_{ij})*e_{kl}\\
			&=e_{ij}*(e_{ij}\cdot e_{kl})\\
			&=e_{ij}*e_{il}\\
			&=e_{ij}*(e_{il}\cdot e_{il})\\
			&=(e_{ij}\cdot e_{il})*e_{il}\\
			&=e_{il}*e_{il}.
			\qedhere
		\end{align*}
	\end{proof}
	
	Denote by $\Ann(A)$ the (two-sided) \textit{annihilator} of $A$, i.e. the ideal of $A$ consisting of $a\in A$ such that $a\cdot b=b\cdot a=0$ for all $b\in A$. 
	
	\begin{lem}\label{centralizer-e_ij}
		For arbitrary $(i,j)\in I\times J$ the centralizer of $e_{ij}$ in $A$ coincides with $\spn_K\{e_{ij}\}\oplus \Ann(A)$.
	\end{lem}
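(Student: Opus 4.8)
The plan is to write a general element $a=\sum_{p,q}a_{pq}e_{pq}\in A$ (a finite sum over $I\times J$) and to compute both one-sided products with $e_{ij}$ directly from the rule $e_{ab}\cdot e_{cd}=e_{ad}$. On one hand $a\cdot e_{ij}=\sum_{p,q}a_{pq}e_{pj}=\sum_p\big(\sum_q a_{pq}\big)e_{pj}$, which is supported on the ``$j$-th column'' $\{e_{pj}:p\in I\}$; on the other hand $e_{ij}\cdot a=\sum_{p,q}a_{pq}e_{iq}=\sum_q\big(\sum_p a_{pq}\big)e_{iq}$, which is supported on the ``$i$-th row'' $\{e_{iq}:q\in J\}$. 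The only basis element lying in both supports is $e_{ij}$, so comparing coefficients of the $e_{pq}$ separates cleanly into three families of conditions.

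Introducing the row sums $r_p:=\sum_q a_{pq}$ and the column sums $c_q:=\sum_p a_{pq}$, the element $a$ centralizes $e_{ij}$ precisely when $\sum_p r_p e_{pj}=\sum_q c_q e_{iq}$; equating coefficients on the basis gives $r_p=0$ for $p\ne i$, $c_q=0$ for $q\ne j$, and $r_i=c_j$ (the latter being the coefficient of the shared element $e_{ij}$). Next I would record the analogous but simpler computation for the annihilator: letting $b$ run over the basis, $a\cdot e_{kl}=\sum_p r_p e_{pl}$ and $e_{kl}\cdot a=\sum_q c_q e_{kq}$, so $a\in\Ann(A)$ if and only if every row sum and every column sum vanishes.

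With these two descriptions the equality follows at once. For the inclusion of $\spn_K\{e_{ij}\}\oplus\Ann(A)$ into the centralizer, one checks that $\lb e_{ij}$ has $r_i=c_j=\lb$ with all other sums zero (hence it centralizes $e_{ij}$), while elements of $\Ann(A)$ satisfy the centralizer conditions trivially. The sum is direct because $\lb e_{ij}\in\Ann(A)$ forces $r_i=\lb=0$. For the reverse inclusion, given $a$ in the centralizer I set $\lb:=r_i=c_j$ and consider $a':=a-\lb e_{ij}$: subtracting $\lb e_{ij}$ cancels the only possibly nonzero sums $r_i$ and $c_j$, so every row and column sum of $a'$ is zero, whence $a'\in\Ann(A)$ and $a=\lb e_{ij}+a'$ lies in the required direct sum.

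There is no serious obstacle here; the argument is a finite linear-algebra computation. The one point requiring care is the bookkeeping over the possibly infinite index sets $I,J$: all the sums $\sum_q a_{pq}$ and $\sum_p a_{pq}$ are finite since only finitely many $a_{pq}$ are nonzero, and the structural fact that makes the coefficient comparison transparent is that $a\cdot e_{ij}$ and $e_{ij}\cdot a$ are supported on the single column $j$ and the single row $i$ respectively, meeting only at $e_{ij}$.
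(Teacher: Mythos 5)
Your proposal is correct and follows essentially the same route as the paper: both compute $a\cdot e_{ij}$ and $e_{ij}\cdot a$ in terms of row and column sums, extract the three coefficient conditions, and then show $a-\lb e_{ij}\in\Ann(A)$ for $\lb$ the common value of the $i$-th row sum and $j$-th column sum. The only cosmetic difference is that you first characterize $\Ann(A)$ as the elements with all row and column sums zero, whereas the paper verifies $(a-\lb e_{ij})\cdot e_{pq}=e_{pq}\cdot(a-\lb e_{ij})=0$ directly; the underlying computation is identical.
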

	\begin{proof}
		It is obvious that $\spn_K\{e_{ij}\}\cap \Ann(A)=\{0\}$ and that any element of the sum $\spn_K\{e_{ij}\}+\nolinebreak\Ann(A)$ commutes with $e_{ij}$, so we only need to prove that the centralizer of $e_{ij}$ is contained in $\spn_K\{e_{ij}\}+\Ann(A)$. Let $a=\sum_{k,l} a_{kl}e_{kl}\in A$ with $a_{kl}\in K$. Then $a\cdot e_{ij}=\sum_{k,l}a_{kl}e_{kj}=\sum_{k\in I}\left(\sum_{l\in J}a_{kl}\right)e_{kj}$ and $e_{ij}\cdot a=\sum_{k,l}a_{kl}e_{il}=\sum_{l\in J}\left(\sum_{k\in I}a_{kl}\right)e_{il}$. Thus,
		\begin{align}\label{a.e_ij=e_ij.a}
			a\cdot e_{ij}=e_{ij}\cdot a\iff
			\begin{cases}
				\sum_{l\in J}a_{kl}=0\text{ for all }k\ne i,\\
				\sum_{k\in I}a_{kl}=0\text{ for all }l\ne j,\\
				\sum_{l\in J}a_{il}=\sum_{k\in I}a_{kj}.
			\end{cases}
		\end{align}
		Assuming that $a\cdot e_{ij}=e_{ij}\cdot a$, for all $(p,q)\in I\times J$ by \cref{a.e_ij=e_ij.a} we have \[a\cdot e_{pq}=\sum_{k\in I}\left(\sum_{l\in J}a_{kl}\right)e_{kq}=\lb e_{iq}\] and \[e_{pq}\cdot a=\sum_{l\in J}\left(\sum_{k\in I}a_{kl}\right)e_{pl}=\lb e_{pj},\] where $\lb=\sum_{l\in J}a_{il}=\sum_{k\in I}a_{kj}$.  Then $(a-\lb e_{ij})\cdot e_{pq}=a\cdot e_{pq}-\lb e_{iq}=0$ and  $e_{pq}\cdot(a-\lb e_{ij})=e_{pq}\cdot a-\lb e_{pj}=0$, so that $a-\lb e_{ij}\in\Ann(A)$.
	\end{proof}
	
	\begin{lem}\label{e_ij*e_ij=centroid+ann}
		Let $*$ be an associative product on $A$. If $*$ is $(12)$-matching with $\cdot$, then there exist $\lb\in K$ and $r_{ij}\in\Ann(A)$, such that
		\begin{align}\label{e_ij*e_ij=lb.e_ij+r_ij}
			e_{ij}*e_{ij}=\lb e_{ij}+r_{ij}
		\end{align}
		for all $(i,j)\in I\times J$.
	\end{lem}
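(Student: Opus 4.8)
The plan is to first pin down the shape of $e_{ij}*e_{ij}$ by a centralizer argument and then to show that the scalar it produces is the same for every corner. Writing out the $(12)$-matching identity $(a*b)\cdot c=a\cdot(b*c)$ with $a=b=c=e_{ij}$ gives $(e_{ij}*e_{ij})\cdot e_{ij}=e_{ij}\cdot(e_{ij}*e_{ij})$, so $e_{ij}*e_{ij}$ lies in the centralizer of $e_{ij}$. By \cref{centralizer-e_ij} this centralizer equals $\spn_K\{e_{ij}\}\oplus\Ann(A)$, which immediately yields scalars $\lb_{ij}\in K$ and elements $r_{ij}\in\Ann(A)$ with $e_{ij}*e_{ij}=\lb_{ij}e_{ij}+r_{ij}$. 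The remaining task is to prove that $\lb_{ij}$ does not actually depend on $(i,j)$.

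For the independence of the first index, I would multiply $e_{ij}*e_{ij}$ on the left by $e_{pi}$. A direct computation using $r_{ij}\in\Ann(A)$ and $e_{pi}\cdot e_{ij}=e_{pj}$ gives $e_{pi}\cdot(e_{ij}*e_{ij})=\lb_{ij}e_{pj}$. On the other hand, applying the matching identity in the form $a\cdot(b*c)=(a*b)\cdot c$ with $a=e_{pi}$ and $b=c=e_{ij}$ rewrites the same expression as $(e_{pi}*e_{ij})\cdot e_{ij}$; here \cref{e_ij*e_kl-is-e_il*e_il} collapses $e_{pi}*e_{ij}$ to $e_{pj}*e_{pj}=\lb_{pj}e_{pj}+r_{pj}$, and then $e_{pj}\cdot e_{ij}=e_{pj}$ together with $r_{pj}\in\Ann(A)$ gives $\lb_{pj}e_{pj}$. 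Comparing the two evaluations forces $\lb_{ij}=\lb_{pj}$, so $\lb_{ij}$ is independent of $i$.

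The independence of the second index is handled symmetrically: multiplying $e_{ij}*e_{ij}$ on the right by $e_{jq}$ yields $\lb_{ij}e_{iq}$ directly, while the matching identity $(a*b)\cdot c=a\cdot(b*c)$ with $a=b=e_{ij}$ and $c=e_{jq}$, combined with \cref{e_ij*e_kl-is-e_il*e_il} reducing $e_{ij}*e_{jq}$ to $e_{iq}*e_{iq}$, gives $\lb_{iq}e_{iq}$; hence $\lb_{ij}=\lb_{iq}$. Combining the two independences shows $\lb_{ij}$ is a single constant $\lb$, completing the proof. The only real subtlety is the second step: one must use the matching identity to turn the mixed expression $e_{pi}\cdot(e_{ij}*e_{ij})$ into a single $*$-product to which \cref{e_ij*e_kl-is-e_il*e_il} applies, so that the scalar attached to the corner $(p,j)$ can be read off and matched against the one attached to $(i,j)$.
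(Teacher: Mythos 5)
Your proof is correct and follows essentially the same route as the paper's: the centralizer decomposition via \cref{centralizer-e_ij}, followed by one left and one right multiplication combined with the $(12)$-matching identity and \cref{e_ij*e_kl-is-e_il*e_il} to remove the dependence of $\lb_{ij}$ on each index in turn. The only slip is notational: $e_{pi}$ and $e_{jq}$ are not in general elements of the rectangular band (that would require $i\in J$ and $j\in I$); replace them by $e_{pj}$ and $e_{pq}$ respectively --- since the band product $e_{ab}\cdot e_{cd}=e_{ad}$ ignores the inner indices, every computation goes through unchanged, exactly as in the paper.
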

	\begin{proof}
		Since $e_{ij}\cdot(e_{ij}*e_{ij})=(e_{ij}*e_{ij})\cdot e_{ij}$, by \cref{centralizer-e_ij} there are $\lb_{ij}\in K$ and $r_{ij}\in\Ann(A)$, such that $e_{ij}*e_{ij}=\lb_{ij} e_{ij}+r_{ij}$. Now, on the one hand $(e_{ij}*e_{ij})\cdot e_{il}=(\lb_{ij} e_{ij}+r_{ij})\cdot e_{il}=\lb_{ij}e_{il}$, and on the other hand using \cref{e_ij*e_kl=e_il*e_il} we have 
		\[(e_{ij}*e_{ij})\cdot e_{il}=e_{ij}\cdot (e_{ij}* e_{il})=e_{ij}\cdot (e_{il}* e_{il})=e_{ij}\cdot(\lb_{il} e_{il}+r_{il})=\lb_{il}e_{il},\]
		whence $\lb_{ij}=\lb_{il}$ for all $i\in I$ and $j,l\in J$. Similarly, $e_{kj}\cdot (e_{ij}*e_{ij})=e_{kj}\cdot(\lb_{ij} e_{ij}+r_{ij})=\lb_{ij}e_{kj}$ and 
		\[e_{kj}\cdot (e_{ij}*e_{ij})=(e_{kj}*e_{ij})\cdot e_{ij}=(e_{kj}*e_{kj})\cdot e_{ij}=(\lb_{kj} e_{kj}+r_{kj})\cdot e_{ij}=\lb_{kj} e_{kj}\] imply $\lb_{ij}=\lb_{kj}$ for all $i,k\in I$ and $j\in J$. Thus, $\lb_{ij}=\lb_{kl}$ for all $(i,j),(k,l)\in I\times J$, and \cref{e_ij*e_ij=lb.e_ij+r_ij} follows.
	\end{proof}
	
	\begin{prop}\label{(12)-matching-rect-band}
		A bilinear product $*$ on $A$ is a $(12)$-matching structure on $(A,\cdot)$ if and only if $*$ is associative and, for all $(i,j),(k,l)\in I\times J$,
		\begin{align}\label{e_ij*e_kl=lb.e_il+r_il}
			e_{ij}*e_{kl}=\lb e_{il}+r_{il},
		\end{align}
		where $\lb\in K$ and $r_{il}\in\Ann(A)$.
	\end{prop}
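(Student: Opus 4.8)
The plan is to read the forward implication directly off the two preceding lemmas and to establish the converse by a short computation on the basis $\{e_{ij}\}$.

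For the forward direction, suppose $*$ is a $(12)$-matching structure on $(A,\cdot)$, so by definition $*$ is associative and $(12)$-matching with $\cdot$. Associativity is then one half of the right-hand side of the equivalence for free, and it remains to produce the form \cref{e_ij*e_kl=lb.e_il+r_il}. Here I would simply chain \cref{e_ij*e_kl-is-e_il*e_il} and \cref{e_ij*e_ij=centroid+ann}: the former collapses an arbitrary product of basis elements onto a diagonal one via $e_{ij}*e_{kl}=e_{il}*e_{il}$, while the latter evaluates every diagonal product as $e_{il}*e_{il}=\lb e_{il}+r_{il}$ with a single global scalar $\lb\in K$ and some $r_{il}\in\Ann(A)$. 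Composing the two gives $e_{ij}*e_{kl}=\lb e_{il}+r_{il}$ at once.

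For the converse, assume $*$ is associative and satisfies \cref{e_ij*e_kl=lb.e_il+r_il}; I would verify the two $(12)$-matching identities \cref{matching-ass-identity}, which with $\cdot_1=\cdot$ and $\cdot_2=*$ read $(a\cdot b)*c=a*(b\cdot c)$ and $(a*b)\cdot c=a\cdot(b*c)$, on basis elements $a=e_{ij}$, $b=e_{kl}$, $c=e_{pq}$ and then extend by bilinearity. The first identity is immediate: by \cref{e_ij.e_kl=e_il} one has $(e_{ij}\cdot e_{kl})*e_{pq}=e_{il}*e_{pq}$ and $e_{ij}*(e_{kl}\cdot e_{pq})=e_{ij}*e_{kq}$, and both equal $\lb e_{iq}+r_{iq}$ by \cref{e_ij*e_kl=lb.e_il+r_il}. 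For the second identity, I would expand $(e_{ij}*e_{kl})\cdot e_{pq}=(\lb e_{il}+r_{il})\cdot e_{pq}$ and $e_{ij}\cdot(e_{kl}*e_{pq})=e_{ij}\cdot(\lb e_{kq}+r_{kq})$; since $r_{il},r_{kq}\in\Ann(A)$ the remainder terms vanish, and \cref{e_ij.e_kl=e_il} leaves both sides equal to $\lb e_{iq}$.

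I do not expect a genuine obstacle in this proposition, since the substantive work is already carried out in \cref{e_ij*e_kl-is-e_il*e_il,e_ij*e_ij=centroid+ann}. The one point worth flagging is that associativity of $*$ is a genuinely independent hypothesis and not a consequence of the form \cref{e_ij*e_kl=lb.e_il+r_il}: because $\Ann(A)$ is an annihilator only for $\cdot$ and not for $*$, a product of this shape need not be $*$-associative for arbitrary $r_{il}$, which is exactly why the statement lists associativity separately. Correspondingly, the annihilator hypothesis $r_{il}\in\Ann(A)$ is used in precisely one place, namely to kill the cross terms in the second matching identity.
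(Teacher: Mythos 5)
Your proposal is correct and follows essentially the same route as the paper: the forward direction is obtained by chaining \cref{e_ij*e_kl-is-e_il*e_il,e_ij*e_ij=centroid+ann}, and the converse is verified on basis elements by exactly the same four computations, with $r_{il}\in\Ann(A)$ killing the cross terms in $(e_{ij}*e_{kl})\cdot e_{pq}=e_{ij}\cdot(e_{kl}*e_{pq})$. Your remark that associativity must be assumed separately is also consistent with how the statement is phrased.
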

	\begin{proof}
		By \cref{e_ij*e_kl-is-e_il*e_il,e_ij*e_ij=centroid+ann} any $(12)$-matching structure on $(A,\cdot)$ has the form \cref{e_ij*e_kl=lb.e_il+r_il}. Conversely, let $*$ be a product on $A$ given by \cref{e_ij*e_kl=lb.e_il+r_il}. Then for all $(i,j),(k,l),(p,q)\in I\times J$ we have $(e_{ij}*e_{kl})\cdot e_{pq}=(\lb e_{il}+r_{il})\cdot e_{pq}=\lb e_{iq}$ and $e_{ij}\cdot(e_{kl}*e_{pq})=e_{ij}\cdot(\lb e_{kq}+r_{kq})=\lb e_{iq}$. Moreover, $(e_{ij}\cdot e_{kl})*e_{pq}=e_{il}*e_{pq}=\lb e_{iq}+r_{iq}$ and $e_{ij}*(e_{kl}\cdot e_{pq})=e_{ij}*e_{kq}=\lb e_{iq}+r_{iq}$.
	\end{proof}
	
	\begin{cor}\label{id-matching-iff-r_il=0}
		The totally compatible structures on $(A,\cdot)$ are exactly the products $*$ of the form
		\begin{align}\label{e_ij*e_kl=lb.e_il}
			e_{ij}*e_{kl}=\lb e_{il},
		\end{align}
		where $\lb\in K$ and $(i,j),(k,l)\in I\times J$.
	\end{cor}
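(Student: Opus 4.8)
The plan is to identify the totally compatible structures on $(A,\cdot)$ as exactly those products that are simultaneously $\id$-matching and $(12)$-matching, and then to intersect the two explicit families already described in \cref{id-match-rect-band,(12)-matching-rect-band}. First I would record the elementary fact that, for associative products, total compatibility is equivalent to being both $\id$-matching and $(12)$-matching: total compatibility makes all four triple products in \cref{totally-comp-ass-identity} coincide, which is precisely both identities of \cref{matching-ass-identity} for $\sg=\id$ together with both for $\sg=(12)$; conversely, the two matching identities chain together to force all four products to be equal. (One may also simply invoke \cref{tot-com-equiv-cond}\cref{cdot_1-and-cdot_2-sg-match-for-all-sg}.)

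For the forward implication, assume $*$ is totally compatible with $\cdot$, hence both $\id$-matching and $(12)$-matching. By \cref{id-match-rect-band} there are scalars $\lb_{jk}\in K$ with $e_{ij}*e_{kl}=\lb_{jk}e_{il}$, and by \cref{(12)-matching-rect-band} there are a single $\lb\in K$ and elements $r_{il}\in\Ann(A)$ with $e_{ij}*e_{kl}=\lb e_{il}+r_{il}$. Equating these for each fixed $(i,j),(k,l)$ yields $(\lb_{jk}-\lb)e_{il}=r_{il}\in\Ann(A)$. The decisive observation is that $e_{il}\notin\Ann(A)$, since $e_{il}\cdot e_{il}=e_{il}\ne 0$; as $\Ann(A)$ is a subspace, this forces $\lb_{jk}=\lb$ and $r_{il}=0$. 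Hence $e_{ij}*e_{kl}=\lb e_{il}$ with a single constant $\lb$, i.e. $*$ has the form \cref{e_ij*e_kl=lb.e_il}.

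Conversely, a product $*$ of the form \cref{e_ij*e_kl=lb.e_il} is a special instance of \cref{e_ij*e_kl=lb_jk.e_il} (take $\lb_{jk}=\lb$ constant), so by \cref{id-match-rect-band} it is $\id$-matching with $\cdot$, and in particular associative. Being associative and of the form \cref{e_ij*e_kl=lb.e_il+r_il} with $r_{il}=0$, it is also $(12)$-matching by \cref{(12)-matching-rect-band}. Being both $\id$- and $(12)$-matching, $*$ is totally compatible, as required.

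I do not anticipate a real obstacle: granted the two preceding propositions, the corollary amounts to intersecting two explicitly parametrised families of products. The only genuine content is the linear-algebra remark $e_{il}\notin\Ann(A)$, which separates the scalar multiple of $e_{il}$ from the annihilator term and simultaneously reconciles the index-dependent coefficients $\lb_{jk}$ of the $\id$-matching description with the global constant $\lb$ of the $(12)$-matching description.
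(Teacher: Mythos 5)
Your proof is correct, but it is organized differently from the paper's. You characterize total compatibility as the conjunction of $\id$-matching and $(12)$-matching (which is indeed covered by \cref{tot-com-equiv-cond}\cref{cdot_1-and-cdot_2-sg-match-for-all-sg}) and then intersect the two parametrized families of \cref{id-match-rect-band,(12)-matching-rect-band}, using the observation that $e_{il}\notin\Ann(A)$ to separate the scalar part from the annihilator part and to force $\lb_{jk}$ to be constant. The paper's forward direction is leaner: it uses only the $(12)$-matching description \cref{e_ij*e_kl=lb.e_il+r_il} and kills $r_{ij}$ by the single instance $(e_{ij}*e_{ij})\cdot e_{ij}=e_{ij}*(e_{ij}\cdot e_{ij})$ of the total-compatibility identity, never invoking \cref{id-match-rect-band}. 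For the converse, the paper recognizes the product \cref{e_ij*e_kl=lb.e_il} as $*_\vf$ for $\vf=\lb\id\in\G(A)$ and applies \cref{a*b=vf(ab)-tot-comp}, which is what lets it immediately deduce $\G(A)=K$ in the following corollary; your converse instead re-derives associativity and both matching properties from the two rectangular-band propositions, which is more self-contained within the subsection but does not set up the centroid computation. Both arguments are complete; yours trades a small amount of extra machinery (the $\id$-matching description and the subspace argument about $\Ann(A)$) for avoiding the direct computation and the centroid lemma.
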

	\begin{proof}
		Let $*$ be totally compatible with $\cdot$. In particular, $*$ is $(12)$-matching with $\cdot$. Then for all $(i,j)\in I\times J$ by \cref{e_ij*e_kl=lb.e_il+r_il} we have $(e_{ij}*e_{ij})\cdot e_{ij}=\lb e_{ij}$, while $e_{ij}*(e_{ij}\cdot e_{ij})=e_{ij}*e_{ij}=\lb e_{ij}+r_{ij}$. Thus, $r_{ij}=0$ for all $(i,j)\in I\times J$ in \cref{e_ij*e_kl=lb.e_il+r_il}, and we get \cref{e_ij*e_kl=lb.e_il}. Conversely, the product \cref{e_ij*e_kl=lb.e_il} is $*_\vf$, where $\vf=\lb\id\in\G(A)$, so $*$ is totally compatible with $\cdot$ by \cref{a*b=vf(ab)-tot-comp}.
	\end{proof}
	
	\begin{cor}
		We have $\G(A)=K$.
	\end{cor}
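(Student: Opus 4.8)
The plan is to identify the centroid $\G(A)$ with the family of totally compatible structures on $(A,\cdot)$ that was just described in \cref{id-matching-iff-r_il=0}. First, every scalar map $\lb\id$ clearly satisfies $x\cdot(\lb\id)(y)=\lb(x\cdot y)=(\lb\id)(x)\cdot y$, so it lies in $\G(A)$; thus the inclusion $K\sst\G(A)$ holds trivially, identifying $\lb\in K$ with $\lb\id$. The content of the statement is therefore the reverse inclusion.

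To prove $\G(A)\sst K\id$, I would take an arbitrary $\vf\in\G(A)$ and feed it into \cref{a*b=vf(ab)-tot-comp}, which tells us that the product $*_\vf$ defined by $a*_\vf b=\vf(a\cdot b)$ is totally compatible with $\cdot$. Now I would invoke \cref{id-matching-iff-r_il=0}, according to which every totally compatible structure on $(A,\cdot)$ has the shape $e_{ij}*e_{kl}=\lb e_{il}$ for a single scalar $\lb\in K$. Applying this description to $*_\vf$ and using the definition of the latter gives $\vf(e_{il})=\vf(e_{ij}\cdot e_{kl})=e_{ij}*_\vf e_{kl}=\lb e_{il}$ for all $(i,l)\in I\times J$.

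Since the elements $e_{il}$ with $(i,l)$ ranging over $I\times J$ form a basis of $A$, and the scalar $\lb$ does not depend on the indices, this forces $\vf=\lb\id$, which yields the inclusion $\G(A)\sst K\id$ and hence the equality $\G(A)=K$. I do not anticipate any genuine obstacle here: the whole argument is a direct transfer of information through the two preceding results. The only point worth checking carefully is that the scalar $\lb$ supplied by \cref{id-matching-iff-r_il=0} is truly uniform in $(i,l)$, so that $\vf$ indeed acts by one and the same scalar on every basis vector rather than by a possibly index-dependent family of scalars.
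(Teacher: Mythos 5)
Your proposal is correct and follows essentially the same route as the paper: pass from $\vf\in\G(A)$ to the totally compatible product $*_\vf$ via \cref{a*b=vf(ab)-tot-comp}, apply \cref{id-matching-iff-r_il=0} to get a uniform scalar $\lb$ with $\vf(e_{il})=\lb e_{il}$, and conclude $\vf=\lb\id$ by linearity (the paper simply specializes to $e_{ij}\cdot e_{ij}=e_{ij}$ rather than general $e_{ij}\cdot e_{kl}$). The uniformity of $\lb$ you flag as the point to check is exactly what \cref{id-matching-iff-r_il=0} already asserts, so there is no gap.
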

	\begin{proof}
		Let $\vf\in\G(A)$ and consider the totally compatible product $*_\vf$ on $A$ as in \cref{a*b=vf(ab)-tot-comp}. By \cref{id-matching-iff-r_il=0} there exists $\lb\in K$ such that $\vf(e_{ij})=\vf(e_{ij}\cdot e_{ij})=e_{ij}*_\vf e_{ij}=\lb e_{ij}$ for all $(i,j)\in I\times J$. By linearity, $\vf=\lb\id$.
	\end{proof}
	
	We summarize the results on totally compatible structures on $A$ in the following.
	\begin{prop}\label{totally-comp-rect-band}
		Let $*$ be an associative product on $A$. Then the following are equivalent:
		\begin{enumerate}
			\item\label{*-interchangeable-rect-band} $*$ is interchangeable with $\cdot$;
			\item\label{*-totally-compatible-rect-band} $*$ is totally compatible with $\cdot$;
			\item\label{*-mutation-rect-band} $*$ is determined by some $\vf\in\G(A)$.
		\end{enumerate}
	\end{prop}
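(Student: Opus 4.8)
The plan is to assemble the proposition entirely from the results already established for the rectangular band algebra, since the substantive computations were carried out in the preceding lemmas and corollary; the proposition is essentially a clean summary of them.

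First I would record the one structural fact that is needed but has not yet been stated explicitly: the algebra $(A,\cdot)$ is idempotent. Indeed, each basis element satisfies $e_{ij}\cdot e_{ij}=e_{ij}$ by \cref{e_ij.e_kl=e_il}, so every $e_{ij}$ lies in $A\cdot A$; hence the basis is contained in $A\cdot A$, and therefore $A\cdot A=A$. With this in hand, the equivalence \cref{*-interchangeable-rect-band}$\iff$\cref{*-totally-compatible-rect-band} is immediate from \cref{interchangeable-is-totally-comp-A^2=A}, which applies to any idempotent associative algebra.

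It then remains to connect total compatibility with being determined by a centroid element. The implication \cref{*-mutation-rect-band}$\impl$\cref{*-totally-compatible-rect-band} is exactly the second assertion of \cref{a*b=vf(ab)-tot-comp}: for $\vf\in\G(A)$ the product $*_\vf$ is totally compatible with $\cdot$ whenever $\cdot$ is associative, which is the case here. For the converse \cref{*-totally-compatible-rect-band}$\impl$\cref{*-mutation-rect-band} I would invoke \cref{id-matching-iff-r_il=0}, which says that a totally compatible $*$ necessarily has the form $e_{ij}*e_{kl}=\lb e_{il}$ for some scalar $\lb\in K$. Writing $\lb e_{il}=\lb(e_{ij}\cdot e_{kl})$ and taking $\vf=\lb\,\id$, which plainly belongs to $\G(A)$, we obtain $e_{ij}*e_{kl}=\vf(e_{ij}\cdot e_{kl})=e_{ij}*_\vf e_{kl}$, so $*=*_\vf$ is determined by $\vf$.

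Since the three conditions are thereby linked by \cref{*-interchangeable-rect-band}$\iff$\cref{*-totally-compatible-rect-band}, \cref{*-totally-compatible-rect-band}$\impl$\cref{*-mutation-rect-band} and \cref{*-mutation-rect-band}$\impl$\cref{*-totally-compatible-rect-band}, all of them are equivalent, which completes the argument. There is no genuine obstacle: the only step requiring a moment's thought is verifying idempotency so that \cref{interchangeable-is-totally-comp-A^2=A} can legitimately be applied, after which the remaining implications are direct citations of \cref{a*b=vf(ab)-tot-comp} and \cref{id-matching-iff-r_il=0}.
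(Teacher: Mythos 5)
Your proof is correct and follows exactly the paper's route: the paper proves this proposition by citing \cref{interchangeable-is-totally-comp-A^2=A}, \cref{a*b=vf(ab)-tot-comp} and \cref{id-matching-iff-r_il=0}, which are precisely the three ingredients you assemble. You merely make explicit the idempotency check and the identification $\vf=\lb\,\id$ that the paper leaves implicit.
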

	\begin{proof}
		This is a consequence of \cref{interchangeable-is-totally-comp-A^2=A,a*b=vf(ab)-tot-comp,id-matching-iff-r_il=0}.
	\end{proof}

	In general, the annihilator part of \cref{e_ij*e_kl=lb.e_il+r_il} may be non-trivial, which makes it difficult to characterize explicitly the associativity of \cref{e_ij*e_kl=lb.e_il+r_il} in terms of $\lb$ and $r_{il}$. So, we just give an example of such an associative product below, which also provides a $(12)$-matching structure on $A$ that is not totally compatible with $\cdot$, showing that one cannot replace interchangeable structures by $(12)$-matching ones in \cref{interchangeable-is-totally-comp-A^2=A}.
	\begin{exm}
		Let $I=J=\{1,2\}$ and $A$ be the semigroup algebra of the rectangular band $I\times J$. Then $\Ann(A)=\spn_K\{e_{11}-e_{12}-e_{21}+e_{22}\}$. Indeed, if $a=\sum_{i,j}a_{ij}e_{ij}$ with $a_{ij}\in K$ belongs to $\Ann(A)$, then $0=a\cdot e_{11}=(a_{11}+a_{12})e_{11}+(a_{21}+a_{22})e_{21}$ and $0=e_{11}\cdot a=(a_{11}+a_{21})e_{11}+(a_{12}+a_{22})e_{12}$, whence $a_{11}=-a_{12}=-a_{21}=a_{22}$, so that $a\in \spn_K\{e_{11}-e_{12}-e_{21}+e_{22}\}$. Conversely, $e_{11}-e_{12}-e_{21}+e_{22}\in\Ann(A)$ by a straightforward calculation.

		Consider the product $*$ on $A$ given by 
		\begin{align*}
			e_{ij}*e_{kl}=
			\begin{cases}
				e_{11}-e_{12}-e_{21}+e_{22}, & (i,l)=(1,2),\\
				0, & \text{otherwise}.
			\end{cases}
		\end{align*}
		Then $(A*A)*A=A*(A*A)=\{0\}$, so that $(A,*)$ is associative. Observe that $*$ is of the form \cref{e_ij*e_kl=lb.e_il+r_il} for $\lb=0$ and $r_{il}=e_{11}-e_{12}-e_{21}+e_{22}\in\Ann(A)$ for $(i,l)=(1,2)$ and $r_{il}=0$ otherwise. Thus, $*$ is $(12)$-matching with $\cdot$, but it is not totally compatible with $\cdot$ by \cref{id-matching-iff-r_il=0}.
	\end{exm}
	
	However, in some cases $\Ann(A)=\{0\}$ and we have the following result.
	
	\begin{prop}
		Let $|I|=1$ or $|J|=1$ (so that $S$ is a right zero or a left zero semigroup, respectively). For an associative product $*$ on $A$ the following are equivalent:
		\begin{enumerate}
			\item\label{*-(12)-matching-right-zero} $*$ is $(12)$-matching with $\cdot$;
			\item\label{*-interchangeable-right-zero} $*$ is interchangeable with $\cdot$;
			\item\label{*-totally-compatible-right-zero} $*$ is totally compatible with $\cdot$;
			\item\label{*-mutation-right-zero} $*$ is determined by some $\vf\in\G(A)$.
		\end{enumerate}
	\end{prop}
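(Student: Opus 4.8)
The plan is to reduce everything to the results already proved for arbitrary index sets by observing that in this degenerate situation the annihilator collapses to zero. First I would compute $\Ann(A)$ directly. Suppose $|I|=1$, so that $S$ is a right zero semigroup; every basis element is of the form $e_{1j}$ and $e_{1j}\cdot e_{1l}=e_{1l}$. Writing $a=\sum_{j\in J}a_j e_{1j}$ with $a_j\in K$, one finds $e_{1l}\cdot a=\sum_{j\in J}a_j e_{1j}=a$ for every $l\in J$, so that $a\in\Ann(A)$ forces $a=0$. The case $|J|=1$ (left zero semigroup) is symmetric, with $a\cdot e_{i1}=a$ playing the same role. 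Hence $\Ann(A)=\{0\}$.

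With this in hand the equivalences follow immediately from the preceding results. The equivalence of \cref{*-interchangeable-right-zero}, \cref{*-totally-compatible-right-zero} and \cref{*-mutation-right-zero} is just \cref{totally-comp-rect-band}, which holds for arbitrary $I$ and $J$. To bring \cref{*-(12)-matching-right-zero} into the chain, I would invoke \cref{(12)-matching-rect-band}: every $(12)$-matching structure is associative and has the form $e_{ij}*e_{kl}=\lb e_{il}+r_{il}$ with $\lb\in K$ and $r_{il}\in\Ann(A)$. Since $\Ann(A)=\{0\}$, this reduces to $e_{ij}*e_{kl}=\lb e_{il}$, which by \cref{id-matching-iff-r_il=0} is exactly the form of a totally compatible structure; thus \cref{*-(12)-matching-right-zero}$\impl$\cref{*-totally-compatible-right-zero}. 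The reverse implication \cref{*-totally-compatible-right-zero}$\impl$\cref{*-(12)-matching-right-zero} is obvious, since total compatibility implies $(12)$-matching.

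There is no genuine obstacle beyond the bookkeeping: the only substantive step is the vanishing of the annihilator, and once that is established the two normal forms coming from \cref{(12)-matching-rect-band} and \cref{id-matching-iff-r_il=0} coincide, forcing the four notions together. This also makes transparent why the phenomenon is controlled entirely by $\Ann(A)$, in contrast with the general case where the example with $I=J=\{1,2\}$ exhibits a nonzero $r_{il}$ and breaks the equivalence between \cref{*-(12)-matching-right-zero} and \cref{*-totally-compatible-right-zero}.
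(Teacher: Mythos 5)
Your proof is correct, but it follows a different main route from the paper's. The paper's primary argument observes that when $|I|=1$ every basis element $e_{i_0j}$ is a \emph{left unit} of $A$ (and a right unit when $|J|=1$), and then simply invokes \cref{(12)-matching-is-totally-comp-left-right-unit}, the general result that for algebras with a one-sided unit, $(12)$-matching already implies total compatibility; the equivalence of \cref{*-interchangeable-right-zero}, \cref{*-totally-compatible-right-zero} and \cref{*-mutation-right-zero} is quoted from \cref{interchangeable-is-totally-comp-A^2=A,totally-comp-rect-band} exactly as you do. Your route — computing $\Ann(A)=\{0\}$ directly and then matching the normal form of \cref{(12)-matching-rect-band} against that of \cref{id-matching-iff-r_il=0} — is the alternative the paper only mentions parenthetically (``or from \cref{id-matching-iff-r_il=0}, since the existence of a left unit implies $\Ann(A)=\{0\}$''). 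Both arguments are sound and short; the paper's buys a cleaner conceptual placement (the degenerate rectangular bands sit inside the class of algebras with a one-sided unit, so no band-specific computation is needed), while yours makes explicit the structural point you correctly emphasize at the end, namely that the gap between $(12)$-matching and total compatibility on these semigroup algebras is measured precisely by $\Ann(A)$, which is nonzero in the $2\times 2$ example and zero here. Your computation of the annihilator is correct as written.
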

	\begin{proof}
		In view of \cref{interchangeable-is-totally-comp-A^2=A,totally-comp-rect-band} we only need to show \cref{*-(12)-matching-right-zero}$\iff$\cref{*-totally-compatible-right-zero}. If $I=\{i_0\}$, then $e_{i_0j}\cdot e_{i_0k}=e_{i_0k}$ for all $j,k\in J$, whence $e_{i_0j}\cdot a=a$ for all $a\in A$, so that each $e_{i_0j}$ is a left unit of $A$, and the result follows by \cref{(12)-matching-is-totally-comp-left-right-unit} (or from \cref{id-matching-iff-r_il=0}, since the existence of a left unit implies $\Ann(A)=\{0\}$). Whenever $J=\{j_0\}$, each $e_{ij_0}$ is a right unit of $A$, and we apply \cref{(12)-matching-is-totally-comp-left-right-unit} again.
	\end{proof}
	
	\subsection{Algebras with enough idempotents}\label{sec-enough-idempotents}
	
	An associative algebra $A$ is said to have \textit{enough idempotents}~\cite{Wisbauer} if there is a family $E$ of orthogonal idempotents $\{e_i\}_{i\in I}\sst A$ such that $A=\bigoplus_{i\in I}e_iA=\bigoplus_{i\in I}Ae_i$. It follows that $A=\bigoplus_{i,j\in I}e_iAe_j$. Observe that $A$ is an idempotent algebra, and it is unital if and only if $|I|<\infty$, in which case $\sum_{i\in I} e_i$ is the unit of $A$. 
	
	\begin{exm}
		The following non-unital associative $K$-algebras have enough idempotents.
		\begin{enumerate}
			\item The algebra of infinite matrices over $K$ with a finite number of nonzero entries. The corresponding idempotents are the matrix units $E_{ii}$.
			\item The path algebra of a quiver with infinite number of vertices. The corresponding idempotents are the trivial paths $e_x$, where $x$ is a vertex.
			\item The direct sum of an infinite family of unital algebras. The corresponding idempotents are the units of the direct summands. Any algebra with enough idempotents such that $E\sst C(A)$ is of this form.
		\end{enumerate}
	\end{exm}
	
	Let $(A,\cdot)$ be an algebra with enough idempotents. Given $a\in A$, we write $a=\sum_{i,j\in I}a_{ij}$, where $a_{ij}=e_iae_j\in e_iAe_j$. Define $M$ to be the $K$-space $\prod_{i,j\in I}e_iAe_j$ whose elements will be denoted by $m=\prod_{i,j\in I}m_{ij}$ with $m_{ij}\in e_iAe_j$. We will identify $A$ with the subspace of $M$ consisting of $m\in M$ with finite $\supp(m):=\{(i,j)\in I^2\mid m_{ij}\ne 0\}$.
	
	\begin{prop}
		The algebra structure on $A$ extends to an $A$-bimodule structure on $M$ by means of
		\begin{align}\label{am-and-ma}
			am=\prod_{i,j\in I}\left(\sum_{k\in I}a_{ik}m_{kj}\right)\text{ and }ma=\prod_{i,j\in I}\left(\sum_{k\in I}m_{ik}a_{kj}\right)
		\end{align}
		for all $a\in A$ and $m\in M$. Moreover, $AMA=A$.
	\end{prop}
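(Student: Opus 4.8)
The plan is to proceed in two stages: first to check that the formulas in \cref{am-and-ma} produce well-defined elements of $M$ and satisfy the bimodule axioms, and then to establish the equality $AMA=A$. The one point that needs constant attention throughout is the finiteness of the relevant index sets, which is what makes every manipulation legitimate.

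I would begin with well-definedness. For $a\in A$ the support $\supp(a)$ is finite, so for each fixed $i$ only finitely many $k\in I$ have $a_{ik}\ne 0$; hence the sum $\sum_{k\in I}a_{ik}m_{kj}$ defining the $(i,j)$-component of $am$ is actually a finite sum. Moreover, since $a_{ik}\in e_iAe_k$ and $m_{kj}\in e_kAe_j$, orthogonality gives $a_{ik}m_{kj}\in e_iAe_j$, so the component lies in the right summand and $am\in M$; the argument for $ma$ is symmetric, using the finite support of $a$ column-wise. Bilinearity over $K$ is immediate from the formulas. I would also record here that the action extends the multiplication of $A$: when the second factor $m=b$ lies in $A$, orthogonality ($e_ke_l=\delta_{kl}e_k$) gives $(ab)_{ij}=e_iabe_j=\sum_{k\in I}a_{ik}b_{kj}$, which is exactly the $(i,j)$-component prescribed by \cref{am-and-ma}, and dually for $ma$.

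Next I would verify the three bimodule axioms by comparing $(i,j)$-components. For the left action,
\[
\big((ab)m\big)_{ij}=\sum_{k,l\in I}a_{il}b_{lk}m_{kj}=\big(a(bm)\big)_{ij},
\]
for the right action,
\[
\big(m(ab)\big)_{ij}=\sum_{k,l\in I}m_{ik}a_{kl}b_{lj}=\big((ma)b\big)_{ij},
\]
and for the compatibility,
\[
\big((am)b\big)_{ij}=\sum_{k,l\in I}a_{ik}m_{kl}b_{lj}=\big(a(mb)\big)_{ij}.
\]
In each case the finite supports of $a$ and $b$ guarantee that the double sums are finite, so the equalities are just rearrangements that reduce to the associativity of $(A,\cdot)$. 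These computations are routine; the only real subtlety is confirming that every interchange of summation is over a finite set, which I expect to be the main (if mild) obstacle and the place where I would be most careful.

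Finally I would prove $AMA=A$. For the inclusion $AMA\sst A$, given $a,b\in A$ and $m\in M$ each component $(amb)_{ij}=\sum_{k,l\in I}a_{ik}m_{kl}b_{lj}$ is a finite sum lying in $e_iAe_j$, and $\supp(amb)$ is contained in the product of the (finite) set of row indices occurring in $\supp(a)$ with the (finite) set of column indices occurring in $\supp(b)$; hence $amb$ has finite support and lies in $A$. For the reverse inclusion $A\sst AMA$, given $a\in A$ I would choose a finite set $S\sst I$ with $\supp(a)\sst S\times S$ and put $f=\sum_{i\in S}e_i\in A$, which is idempotent by orthogonality. Then $faf=\sum_{i,j\in S}e_iae_j=a$, exhibiting $a=f\cdot a\cdot f\in AMA$ with the middle factor $a$ regarded as an element of $M$. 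Combining the two inclusions yields $AMA=A$, completing the proof.
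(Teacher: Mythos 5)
Your proposal is correct and follows essentially the same route as the paper: componentwise verification of the bimodule axioms via finite double sums, finiteness of $\supp(amb)$ inside a product of two finite index sets for $AMA\sst A$, and use of the idempotents to exhibit each $a\in A$ as a triple product for the reverse inclusion. The only cosmetic difference is that the paper writes $a_i=e_i\cdot e_i\cdot a_i$ on each summand $e_iA$ rather than assembling a single local unit $f=\sum_{i\in S}e_i$, but the idea is identical.
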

	\begin{proof}
		The operations \cref{am-and-ma} are clearly well-defined and bilinear. For all $a,b\in A$ and $m\in M$ we have 
		\begin{align*}
			(a(bm))_{ij}&=\sum_{k\in I}a_{ik}(bm)_{kj}=\sum_{k\in I}\sum_{l\in I}a_{ik}b_{kl}m_{lj}\\
			&=\sum_{l\in I}\sum_{k\in I}a_{ik}b_{kl}m_{lj}=\sum_{l\in I}(ab)_{il}m_{lj}=((ab)m)_{ij},
		\end{align*}
		and similarly one proves that $((ma)b)_{ij}=(m(ab))_{ij}$. Finally, 
		\begin{align}\label{(amb)_ij=sum-a_ik.m_kl.b_lj}
			(a(mb))_{ij}=\sum_{k\in I}\sum_{l\in I}a_{ik}m_{kl}b_{lj}=\sum_{l\in I}\sum_{k\in I}a_{ik}m_{kl}b_{lj}=((am)b)_{ij}.
		\end{align}
		
		Since $I_1:=\{i\in I\mid\exists k\in I:\ a_{ik}\ne 0\}$ and $I_2:=\{j\in I\mid\exists l\in I:\ b_{lj}\ne 0\}$ are finite, then $\supp(amb)$ is finite as a subset of $I_1\times I_2$ by \cref{(amb)_ij=sum-a_ik.m_kl.b_lj}. Thus, $AMA\sst A$. The converse inclusion is obvious because $a_i=e_i\cdot e_i\cdot a_i$ for all $a_i\in e_iA$, so that $A=A\cdot A\cdot A\sst AMA$.
	\end{proof}

	\begin{lem}\label{e_i*e_j-in-e_iAe_j}
		Let $*$ be an associative product on $A$. If $*$ is $\id$-matching with $\cdot$, then $e_i*e_j\in e_iAe_j$ for all $i,j\in I$. 
	\end{lem}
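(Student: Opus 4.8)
The plan is to translate the $\id$-matching condition into two concrete identities for the pair $(\cdot,*)$ and then feed in the idempotents $e_i$ at the right spots. Taking $\sg=\id$ in \cref{matching-ass-identity} (with $\cdot_1=\cdot$ and $\cdot_2=*$) gives
\[
(a\cdot b)*c=a\cdot(b*c)\quad\text{and}\quad (a*b)\cdot c=a*(b\cdot c)
\]
for all $a,b,c\in A$, exactly as used in the proofs of \cref{id-match-rect-band,(12)-matching-rect-band}. These are the only properties of $*$ I intend to use, together with associativity and idempotency of $\cdot$.

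First I would record the membership criterion. Since $A=\bigoplus_{k,l\in I}e_kAe_l$ and the orthogonality relations $e_k\cdot e_l=\dl_{kl}e_k$ make $x\mapsto e_i\cdot x\cdot e_j$ the projection of $A$ onto its $(i,j)$-summand, an element $x\in A$ lies in $e_iAe_j$ if and only if $e_i\cdot x\cdot e_j=x$. Hence it suffices to prove $e_i\cdot(e_i*e_j)\cdot e_j=e_i*e_j$.

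Next I would obtain the two ``halves'' separately. Applying the first matching identity with $a=b=e_i$, $c=e_j$ and using $e_i\cdot e_i=e_i$ yields $e_i*e_j=(e_i\cdot e_i)*e_j=e_i\cdot(e_i*e_j)$; symmetrically, the second identity with $a=e_i$, $b=c=e_j$ together with $e_j\cdot e_j=e_j$ yields $e_i*e_j=(e_i*e_j)\cdot e_j$. Combining these via associativity of $\cdot$,
\[
e_i\cdot(e_i*e_j)\cdot e_j=\bigl(e_i\cdot(e_i*e_j)\bigr)\cdot e_j=(e_i*e_j)\cdot e_j=e_i*e_j,
\]
which is precisely the membership criterion, so $e_i*e_j\in e_iAe_j$.

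The argument is essentially a two-line computation, so I do not expect a genuine obstacle. The only points requiring care are matching the correct specialization of each $\id$-matching identity to the side (left versus right) it controls, and recalling that membership in $e_iAe_j$ is captured by the double projection $e_i\cdot(-)\cdot e_j$; associativity of $\cdot$ and idempotency of the $e_i$ then do all the work.
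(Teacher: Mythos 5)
Your proof is correct and follows essentially the same route as the paper: specialize the two $\id$-matching identities at the idempotents to get $e_i*e_j=e_i\cdot(e_i*e_j)$ and $e_i*e_j=(e_i*e_j)\cdot e_j$, then combine them to exhibit $e_i*e_j$ as $e_i\cdot(e_i*e_j)\cdot e_j\in e_iAe_j$. The explicit membership criterion you record is a harmless elaboration of what the paper leaves implicit.
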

	\begin{proof}
		Since $e_i\cdot e_i=e_i$, then $e_i*e_j = (e_i\cdot e_i)*e_j = e_i\cdot(e_i*e_j)$. Similarly, $e_j\cdot e_j=e_j$ yields $e_i*e_j=e_i*(e_j\cdot e_j)=(e_i*e_j)\cdot e_j$. Hence $e_i*e_j=e_i\cdot(e_i*e_j)\cdot e_j\in e_iAe_j$.
	\end{proof}
	
	We thus define 
	\begin{align}\label{m=sum-e_i*e_j}
		x=\prod_{i,j\in I}(e_i*e_j)\in M.
	\end{align}

	\begin{lem}\label{a_ij*a_kl=a_ij.m.a_kl}
		Let $*$ be an associative product on $A$. If $*$ is $\id$-matching with $\cdot$, then for all $a_{ij}\in e_iAe_j$ and $a_{kl}\in e_kAe_l$ we have $a_{ij}*a_{kl}=a_{ij}xa_{kl}$. 
	\end{lem}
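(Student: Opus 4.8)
The plan is to evaluate each side separately and match both to the common expression $a_{ij}\cdot(e_j*e_k)\cdot a_{kl}$, using the bimodule structure \cref{am-and-ma} on the right and the two $\id$-matching identities on the left.

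First I would unwind the right-hand side $a_{ij}xa_{kl}$. Viewed inside $M$, the element $a_{ij}\in e_iAe_j$ is supported on the single block $(i,j)$ and $a_{kl}\in e_kAe_l$ on the block $(k,l)$, so in the left action $a_{ij}x$ only the entries $x_{jq}$ (row $j$ of $x$) survive, placed in row $i$; the subsequent right action by $a_{kl}$ then selects the column index $k$ and the output column $l$. Hence the only nonzero block of $a_{ij}xa_{kl}$ is the $(i,l)$-block, equal to $a_{ij}\cdot x_{jk}\cdot a_{kl}$, which by the definition \cref{m=sum-e_i*e_j} of $x$ is $a_{ij}\cdot(e_j*e_k)\cdot a_{kl}$. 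In particular this element has finite support (a single block), consistent with $a_{ij}xa_{kl}\in AMA=A$, and we obtain $a_{ij}xa_{kl}=a_{ij}\cdot(e_j*e_k)\cdot a_{kl}$.

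Next I would compute the left-hand side $a_{ij}*a_{kl}$. Since $a_{ij}\in e_iAe_j$ and $a_{kl}\in e_kAe_l$, we may write $a_{ij}=a_{ij}\cdot e_j$ and $a_{kl}=e_k\cdot a_{kl}$. Applying the $\id$-matching identity $(a\cdot b)*c=a\cdot(b*c)$ with $a=a_{ij}$, $b=e_j$, $c=a_{kl}$ gives $a_{ij}*a_{kl}=(a_{ij}\cdot e_j)*a_{kl}=a_{ij}\cdot(e_j*a_{kl})$, and then applying the other $\id$-matching identity $(a*b)\cdot c=a*(b\cdot c)$ with $a=e_j$, $b=e_k$, $c=a_{kl}$ gives $e_j*a_{kl}=e_j*(e_k\cdot a_{kl})=(e_j*e_k)\cdot a_{kl}$. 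Combining the two, $a_{ij}*a_{kl}=a_{ij}\cdot(e_j*e_k)\cdot a_{kl}$, which coincides with the value computed for $a_{ij}xa_{kl}$, and the claim follows.

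The two applications of $\id$-matching are routine once the idempotents $e_j$ and $e_k$ are inserted in the right places; the only step requiring genuine care is the bimodule bookkeeping for $a_{ij}xa_{kl}$, where one must verify that among the infinitely many components of $x\in M$ exactly the single entry $x_{jk}=e_j*e_k\in e_jAe_k$ (by \cref{e_i*e_j-in-e_iAe_j}) contributes, so that the result is a well-defined element of $A$.
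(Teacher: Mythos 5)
Your proof is correct and follows essentially the same route as the paper: insert the idempotents $e_j$ and $e_k$, apply the two $\id$-matching identities to get $a_{ij}*a_{kl}=a_{ij}\cdot(e_j*e_k)\cdot a_{kl}$, and identify this with $a_{ij}xa_{kl}$ via the bimodule formula. The only difference is that you spell out the single-block bookkeeping for $a_{ij}xa_{kl}$, which the paper leaves implicit by citing \cref{(amb)_ij=sum-a_ik.m_kl.b_lj}.
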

	\begin{proof}
		By \cref{m=sum-e_i*e_j,(amb)_ij=sum-a_ik.m_kl.b_lj}
		\begin{align*}
			a_{ij}*a_{kl}&=(a_{ij}\cdot e_j)*a_{kl}=a_{ij}\cdot (e_j*a_{kl})=a_{ij}\cdot (e_j*(e_k\cdot a_{kl}))\\
			&=a_{ij}\cdot ((e_j*e_k)\cdot a_{kl})=a_{ij}\cdot x_{jk}\cdot a_{kl}=a_{ij}xa_{kl}.
			\qedhere
		\end{align*}
	\end{proof}
	
	\begin{prop}\label{id-matching-enough-idemp}
		The $\id$-matching structures on $(A,\cdot )$ are exactly the products $*$ on $A$ of the form 
		\begin{align}\label{a*b=a.m.b}
			a*b=axb
		\end{align}
		for all $a,b\in A$.
	\end{prop}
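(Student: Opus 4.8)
The plan is to establish the two inclusions separately, leaning on the two preceding lemmas for the ``only if'' direction and on a direct verification for the ``if'' direction. For the inclusion showing that every $\id$-matching structure $*$ has the stated form, I would first invoke \cref{e_i*e_j-in-e_iAe_j}, which guarantees $e_i*e_j\in e_iAe_j$, so that $x=\prod_{i,j\in I}(e_i*e_j)$ is a genuine element of $M$ exactly as in \cref{m=sum-e_i*e_j}. Then \cref{a_ij*a_kl=a_ij.m.a_kl} tells us that $a_{ij}*a_{kl}=a_{ij}xa_{kl}$ on homogeneous components $a_{ij}\in e_iAe_j$ and $a_{kl}\in e_kAe_l$. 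Writing an arbitrary $a=\sum_{i,j}a_{ij}$ and $b=\sum_{k,l}b_{kl}$ (finite sums, since elements of $A$ have finite support) and using bilinearity of both $*$ and the triple product $(a,b)\mapsto axb$, I would expand $a*b=\sum_{i,j,k,l}a_{ij}*b_{kl}=\sum_{i,j,k,l}a_{ij}xb_{kl}=axb$, which is exactly \cref{a*b=a.m.b}.

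For the converse I would fix an arbitrary $x\in M$ and set $a*b=axb$; since $AMA\sst A$ this is a well-defined bilinear product on $A$. To see that it is associative I would compute $(a*b)*c=(axb)xc$ and $a*(b*c)=ax(bxc)$ and check that both equal the common ``quadruple'' expression whose $(i,j)$-component is $\sum_{k,l,m,n}a_{ik}x_{kl}b_{lm}x_{mn}c_{nj}$, a routine reindexing based on the component formula \cref{(amb)_ij=sum-a_ik.m_kl.b_lj} (equivalently, an instance of the bimodule relations $(ab)m=a(bm)$, $(ma)b=m(ab)$ and $(am)b=a(mb)$ established for $M$). For the $\id$-matching identities I would argue the same way: $(a*b)\cdot c=(axb)\cdot c=ax(bc)=a*(b\cdot c)$ and $(a\cdot b)*c=(ab)xc=a(bxc)=a\cdot(b*c)$, each middle equality being precisely one of those bimodule associativity relations.

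The main point to be careful about is purely formal: since $M$ is only an $A$-bimodule and not an algebra, one never multiplies two elements of $M$, so every iterated expression such as $(axb)xc$ must be read as successively applying the module actions to elements of $A$, and the reshuffling of parentheses must be justified solely by the three bimodule relations of the earlier proposition (equivalently, by the componentwise formula \cref{(amb)_ij=sum-a_ik.m_kl.b_lj}). The only other mild subtlety is the bilinear extension in the forward direction, where one must note that $A$ is an infinite direct sum; this causes no trouble because each element has finite support, so all the sums involved are finite.
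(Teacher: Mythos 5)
Your proposal is correct and follows essentially the same route as the paper: the forward direction is exactly the paper's appeal to \cref{e_i*e_j-in-e_iAe_j} and \cref{a_ij*a_kl=a_ij.m.a_kl} followed by bilinear extension, and the converse is the same verification of the two matching identities via the bimodule relations. Your explicit check that $a*b=axb$ is associative (which the paper leaves tacit) and your remark about reading iterated expressions like $(axb)xc$ purely through the bimodule relations are welcome but do not change the argument.
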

	\begin{proof}
		The fact that any $\id$-matching with $\cdot$ product $*$ on $A$ has form \cref{a*b=a.m.b} is a consequence of \cref{a_ij*a_kl=a_ij.m.a_kl}. Conversely, let $*$ be given by \cref{a*b=a.m.b}. Then \[(a*b)\cdot c=(axb)\cdot c=ax(b\cdot c)=a*(b\cdot c)\] and similarly \[a\cdot(b*c)=a\cdot(bxc)=(a\cdot b)xc=(a\cdot b)*c.\qedhere\]
	\end{proof}
	
	\begin{lem}\label{centroid-of-oplus-e_iAe_j}
		Let $\vf:A\to A$ be a linear map. Then $\vf\in\G(A)$ if and only if there exists $m\in M$ such that 
		\begin{align}\label{vf(a)=ma=am}
			\vf(a)=ma=am
		\end{align}
		for all $a\in A$.
	\end{lem}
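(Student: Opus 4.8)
The plan is to prove both implications directly from the two ingredients already available: the $A$-bimodule structure on $M$ (in particular the associativity relations $a(bm)=(ab)m$ and $(ma)b=m(ab)$ established in the preceding proposition) and the defining identities $x\cdot\vf(y)=\vf(x\cdot y)=\vf(x)\cdot y$ of the centroid. For the \emph{if} direction, suppose $m\in M$ satisfies $\vf(a)=ma=am$ for all $a\in A$. For $x,y\in A$ I would read off $\vf(xy)=(xy)m$ and $\vf(xy)=m(xy)$ from the two forms of the hypothesis and then compute $x\vf(y)=x(ym)=(xy)m=\vf(xy)$ and $\vf(x)y=(mx)y=m(xy)=\vf(xy)$, where the middle equalities are precisely the bimodule associativities. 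This yields $x\vf(y)=\vf(xy)=\vf(x)y$, so $\vf\in\G(A)$; this half is routine once the bimodule relations are invoked.

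For the \emph{only if} direction, suppose $\vf\in\G(A)$. The first step is to locate the image of each idempotent: applying the centroid identities to $x=y=e_j$ gives $\vf(e_j)=\vf(e_je_j)=e_j\vf(e_j)=\vf(e_j)e_j$, hence $\vf(e_j)\in e_jAe_j$. This makes it legitimate to define $m\in M$ by $m_{ij}:=\delta_{ij}\vf(e_j)$, i.e.\ the diagonal element of $M=\prod_{i,j\in I}e_iAe_j$ with $(j,j)$-entry $\vf(e_j)$. It then remains to verify $\vf(a)=ma=am$. Writing $a=\sum_{i,l}a_{il}$ with $a_{il}\in e_iAe_l$ and using $e_ia_{il}=a_{il}=a_{il}e_l$, the centroid identities give $\vf(a_{il})=\vf(e_i)a_{il}=a_{il}\vf(e_l)$. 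Substituting the diagonal $m$ into the action \cref{am-and-ma}, only the matching diagonal entry survives each sum, so $(ma)_{il}=m_{ii}a_{il}=\vf(e_i)a_{il}=\vf(a_{il})$ and $(am)_{il}=a_{il}m_{ll}=a_{il}\vf(e_l)=\vf(a_{il})$; since $\vf(a)=\sum_{i,l}\vf(a_{il})$ with $\vf(a_{il})\in e_iAe_l$, this gives $ma=\vf(a)=am$.

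The main obstacle I anticipate is conceptual rather than computational: because $A$ need not be unital (when $|I|=\infty$ there is no $1\in A$), one cannot mimic the unital picture and simply set $m=\vf(1)$. The construction above circumvents this by assembling $m$ from the $\vf(e_j)$ componentwise, so the delicate points are to confirm that $m$ genuinely lies in $M$ (which is exactly what $\vf(e_j)\in e_jAe_j$ guarantees) and that the a priori infinite sums in the module action collapse to single terms thanks to the diagonality of $m$, so that $ma$ and $am$ are well defined and reproduce $\vf(a)\in A$.
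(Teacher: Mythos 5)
Your proposal is correct and takes essentially the same route as the paper: both directions use the bimodule associativity relations for the ``if'' part, and for the ``only if'' part both locate $\vf(e_i)\in e_iAe_i$ via $\vf(e_i)=\vf(e_ie_i)$ and assemble the diagonal element $m=\prod_{i\in I}\vf(e_i)$, verifying $\vf(a_{ij})=\vf(e_i)a_{ij}=a_{ij}\vf(e_j)$ componentwise. The only difference is presentational (your explicit remark about the infinite sums collapsing to one term), so no changes are needed.
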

	\begin{proof}
		Let $\vf\in\G(A)$. Then $\vf(e_i)=\vf(e_i\cdot e_i)=e_i\vf(e_i)=\vf(e_i)e_i$, so that $\vf(e_i)\in e_iAe_i$. Define $m=\prod_{i\in I}\vf(e_i)\in M$. For any $a_{ij}\in e_iAe_j$ we have \[\vf(a_{ij})=\vf(e_i\cdot a_{ij})=\vf(e_i)a_{ij}=m_{ii}a_{ij}=ma_{ij}.\] On the other hand, 
		\[\vf(a_{ij})=\vf(a_{ij}\cdot e_j)=a_{ij}\vf(e_j)=a_{ij}m_{jj}=a_{ij}m.\]
		Thus, \cref{vf(a)=ma=am} holds for all $a\in A$.
		
		Conversely, assume \cref{vf(a)=ma=am}. Then \[\vf(a)\cdot b=(ma)\cdot b=m(a\cdot b)=\vf(a\cdot b) \quad \textup{and} \quad a\cdot \vf(b)=a\cdot bm=(a\cdot b)m=\vf(a\cdot b),\]
		for all $a,b\in A$, so $\vf\in\G(A)$.
	\end{proof}
	
	\begin{lem}\label{e_i*e_i-in-e_iAe_i}
		Let $*$ be an associative product on $A$. If $*$ is $(12)$-matching with $\cdot$, then $e_i*e_j=0$ for all $i\ne j$ and $e_i*e_i\in e_iAe_i$ for all $i\in I$.
	\end{lem}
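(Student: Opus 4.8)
The plan is to read everything off the two defining relations of $(12)$-matching. Unpacking \cref{matching-ass-identity} for $\sg=(12)$ applied to the pair $\{\cdot,*\}$ (the two relations are symmetric under interchanging the two products), they become
\begin{align*}
	(a*b)\cdot c=a\cdot(b*c)\quad\text{and}\quad(a\cdot b)*c=a*(b\cdot c)
\end{align*}
for all $a,b,c\in A$. The only other inputs are the idempotent relations $e_ie_i=e_i$, the orthogonality $e_ie_j=0$ for $i\ne j$, and the Peirce decomposition $A=\bigoplus_{p,q\in I}e_pAe_q$ recorded at the start of this subsection.

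First I would dispose of the off-diagonal products. Substituting $a=e_i$, $b=c=e_j$ with $i\ne j$ into the second relation gives $(e_i\cdot e_j)*e_j=e_i*(e_j\cdot e_j)$, that is $0=e_i*e_j$, since $e_ie_j=0$ while $e_je_j=e_j$. Hence $e_i*e_j=0$ whenever $i\ne j$, which is the first assertion.

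For the diagonal term the idea is to show that $e_i*e_i$ is annihilated on the left by every $e_p$ with $p\ne i$ and on the right by every $e_q$ with $q\ne i$, and then to invoke the directness of the Peirce decomposition. Taking the first relation with $a=e_p$ ($p\ne i$), $b=c=e_i$ yields $(e_p*e_i)\cdot e_i=e_p\cdot(e_i*e_i)$, whose left-hand side vanishes by the previous step, so $e_p\cdot(e_i*e_i)=0$; symmetrically, $a=b=e_i$, $c=e_q$ ($q\ne i$) gives $(e_i*e_i)\cdot e_q=e_i\cdot(e_i*e_q)=0$. Writing $e_i*e_i=\sum_{p,q}u_{pq}$ with $u_{pq}\in e_pAe_q$ (a finite sum, since elements of $A$ have finite support), the relation $e_p\cdot(e_i*e_i)=0$ collapses to $\sum_q u_{pq}=0$ and forces $u_{pq}=0$ for all $q$ when $p\ne i$; likewise $(e_i*e_i)\cdot e_q=0$ removes every surviving $u_{iq}$ with $q\ne i$. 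Only $u_{ii}$ remains, so $e_i*e_i=u_{ii}\in e_iAe_i$.

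The computations are all single substitutions into the two $(12)$-matching identities, so the one place that needs care is this last bookkeeping step: one must use that $A$ (as opposed to the ambient space $M$) sits inside $\bigoplus_{p,q}e_pAe_q$ as a genuine direct sum, which is exactly what turns the ``annihilated by each left/right $e$-multiple'' relations into the vanishing of the individual Peirce components. I expect no genuine obstacle beyond keeping the conventions in \cref{matching-ass-identity} straight.
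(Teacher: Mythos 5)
Your proof is correct and follows essentially the same route as the paper's: both derive $e_i*e_j=0$ for $i\ne j$ from the identity $(a\cdot b)*c=a*(b\cdot c)$ and then use $(a*b)\cdot c=a\cdot(b*c)$ to show $e_i*e_i$ is killed by left and right multiplication by the off-diagonal idempotents, concluding via the Peirce decomposition. The only differences are cosmetic (a slightly different substitution for the off-diagonal step, and you spell out the final direct-sum bookkeeping that the paper leaves implicit).
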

	\begin{proof}
		Let $i\ne j$. Then $e_i*e_j = (e_i\cdot e_i)*e_j = e_i*(e_i\cdot e_j)=0$. It follows that $(e_i*e_i)\cdot e_j=e_i\cdot (e_i*e_j)=0$ and $e_j\cdot (e_i*e_i)=(e_j*e_i)\cdot e_i=0$, so that $e_i*e_i\in e_iAe_i$.
	\end{proof}
	
	We thus define as in \cref{m=sum-e_i*e_j}
	\begin{align}\label{m=sum-e_i*e_i}
		x=\prod_{i\in I}(e_i*e_i)\in M.
	\end{align}
	
	\begin{lem}\label{prod(e_i*e_i)a=aprod(e_i*e_i)}
		Let $*$ be an associative product on $A$. If $*$ is $(12)$-matching with $\cdot$, then
		\begin{align}\label{m.a=a.m}
			xa=ax
		\end{align} 
		for all $a\in A$.
	\end{lem}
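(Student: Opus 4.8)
The plan is to reduce the commutation relation \cref{m.a=a.m} to a single homogeneous identity and then exploit the diagonal shape of $x$. By bilinearity of the bimodule action \cref{am-and-ma}, it suffices to verify $xa=ax$ when $a=a_{kl}\in e_kAe_l$ for arbitrary $k,l\in I$. Lemma \cref{e_i*e_i-in-e_iAe_i} together with the definition \cref{m=sum-e_i*e_i} tells us that $x$ is supported on the diagonal, with $x_{ii}=e_i*e_i\in e_iAe_i$ and $x_{ij}=0$ for $i\ne j$. Substituting this into \cref{am-and-ma} collapses each product to a single nonzero component, so I would first record that $xa_{kl}=(e_k*e_k)\cdot a_{kl}$ and $a_{kl}x=a_{kl}\cdot(e_l*e_l)$, both lying in $e_kAe_l\sst A$. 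The whole statement then reduces to the single identity
\[
(e_k*e_k)\cdot a_{kl}=a_{kl}\cdot(e_l*e_l).
\]

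To prove this I would work entirely inside $A$ with the two $(12)$-matching identities $(a\cdot b)*c=a*(b\cdot c)$ and $(a*b)\cdot c=a\cdot(b*c)$, pushing idempotents through the two products. The hinge of the argument is the absorption identity $e_k*a_{kl}=a_{kl}*e_l$: writing $a_{kl}=a_{kl}\cdot e_l$ and applying the first matching identity gives
\[
e_k*a_{kl}=e_k*(a_{kl}\cdot e_l)=(e_k\cdot a_{kl})*e_l=a_{kl}*e_l,
\]
where the last step uses $e_k\cdot a_{kl}=a_{kl}$. With this fact available, a short chain built from the second matching identity (applied three times) links the two sides:
\begin{align*}
(e_k*e_k)\cdot a_{kl}
&=e_k\cdot(e_k*a_{kl})
=e_k\cdot(a_{kl}*e_l)\\
&=(e_k*a_{kl})\cdot e_l
=(a_{kl}*e_l)\cdot e_l
=a_{kl}\cdot(e_l*e_l).
\end{align*}

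I expect the main (and essentially only) obstacle to be bookkeeping: keeping the two directions of the $(12)$-matching identities straight, and recognizing that the absorption fact $e_k*a_{kl}=a_{kl}*e_l$ is exactly what allows one idempotent to be traded for the other on the opposite side of the chain. Once that fact is isolated, the remaining steps are routine substitutions of the form $(a*b)\cdot c=a\cdot(b*c)$ with $a,b,c$ chosen among $e_k$, $a_{kl}$ and $e_l$. Note that no hypothesis on $A$ beyond having enough idempotents and the associativity of $*$ enters, and the computation is uniform in $k$ and $l$, so the reduction to homogeneous components costs nothing.
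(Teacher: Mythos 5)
Your proof is correct: the reduction to a homogeneous $a_{kl}\in e_kAe_l$, the identification $xa_{kl}=(e_k*e_k)\cdot a_{kl}$ and $a_{kl}x=a_{kl}\cdot(e_l*e_l)$ from the diagonal shape of $x$, the absorption identity $e_k*a_{kl}=(e_k\cdot a_{kl})*e_l=a_{kl}*e_l$, and each link of the closing chain are legitimate instances of the two $(12)$-matching identities $(a\cdot b)*c=a*(b\cdot c)$ and $(a*b)\cdot c=a\cdot(b*c)$.

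The assembly differs from the paper's in a small but genuine way. The paper first establishes the two intermediate identities $e_i*a_{ij}=xa_{ij}$ and $a_{ij}*e_j=a_{ij}x$ (recorded as \cref{e_i*a_ij=ma_ij,a_ij*e_j=a_ijm}); each of these requires the vanishing $e_k*e_i=0$ for $k\ne i$ from \cref{e_i*e_i-in-e_iAe_i} in order to kill the components $e_k\cdot(e_i*a_{ij})$ and $(a_{ij}*e_j)\cdot e_k$ and so identify $e_i*a_{ij}$ with $e_i\cdot(e_i*a_{ij})$; the two are then linked by the same absorption identity you isolate. Your chain instead reaches $(e_k*e_k)\cdot a_{kl}=a_{kl}\cdot(e_l*e_l)$ directly, applying the absorption identity twice and the second matching identity three times, and never invokes the vanishing of the mixed products $e_k*e_i$ beyond what is already packed into the definition \cref{m=sum-e_i*e_i} of $x$. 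What you give up is that the identities \cref{e_i*a_ij=ma_ij,a_ij*e_j=a_ijm} are not by-products of your argument, and the paper reuses \cref{a_ij*e_j=a_ijm} in the proof of the next lemma (\cref{a_ij*a_jk=a_ij.m.a_jk}); with your proof one would have to re-derive it there, which is immediate but worth noting.
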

	\begin{proof}
		We have $e_k\cdot (e_i*a_{ij})=(e_k*e_i)\cdot a_{ij}=0$ for all $a_{ij}\in e_iAe_j$ and $k\ne i$ by \cref{e_i*e_i-in-e_iAe_i}, so that 
		\begin{align}\label{e_i*a_ij=ma_ij}
			e_i*a_{ij}=e_i\cdot (e_i*a_{ij})=(e_i*e_i)\cdot a_{ij}=x_{ii}a_{ij}=xa_{ij}.
		\end{align}
		Similarly, $(a_{ij}*e_j)\cdot e_k=a_{ij}\cdot(e_j*e_k)=0$ for all $a_{ij}\in e_iAe_j$ and $k\ne j$, so that 
		\begin{align}\label{a_ij*e_j=a_ijm}
			a_{ij}*e_j=(a_{ij}*e_j)\cdot e_j=a_{ij}\cdot(e_j*e_j)=a_{ij}x_{jj}=a_{ij}x.
		\end{align}
		But $e_i*a_{ij}=e_i*(a_{ij}\cdot e_j)=(e_i\cdot a_{ij})*e_j=a_{ij}*e_j$, whence \cref{m.a=a.m} by \cref{e_i*a_ij=ma_ij,a_ij*e_j=a_ijm}.
	\end{proof}
	
	\begin{lem}\label{a_ij*a_jk=a_ij.m.a_jk}
		Let $*$ be an associative product on $A$. If $*$ is $(12)$-matching with $\cdot$, then there exists $\vf\in\G(A)$ such that for all $a_{ij}\in e_iAe_j$ and $a_{kl}\in e_kAe_l$ we have 
		\begin{align}\label{a_ij*a_kl=vf(a_ij.a_kl)}
			a_{ij}*a_{kl}=\vf(a_{ij}\cdot a_{kl}).
		\end{align}
	\end{lem}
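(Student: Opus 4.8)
The plan is to produce the required centroid element explicitly from the diagonal datum $x=\prod_{i\in I}(e_i*e_i)$ introduced in \cref{m=sum-e_i*e_i}, and then to verify \cref{a_ij*a_kl=vf(a_ij.a_kl)} by a short case analysis on the indices. First I would set $\vf(a):=xa=ax$. This is a well-defined linear endomorphism of $A$ because $x$ is concentrated on the diagonal (by \cref{e_i*e_i-in-e_iAe_i} we have $x_{ii}=e_i*e_i\in e_iAe_i$ and $x_{ij}=0$ for $i\ne j$, so $(xa)_{ij}=x_{ii}a_{ij}$ and $\supp(xa)\sst\supp(a)$ is finite), and the two expressions $xa$ and $ax$ coincide by \cref{prod(e_i*e_i)a=aprod(e_i*e_i)}. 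Applying \cref{centroid-of-oplus-e_iAe_j} with $m=x$ then gives $\vf\in\G(A)$.

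With $\vf$ in hand, it remains to check \cref{a_ij*a_kl=vf(a_ij.a_kl)} for $a_{ij}\in e_iAe_j$ and $a_{kl}\in e_kAe_l$. If $j\ne k$, both sides vanish: the right-hand side is zero since $a_{ij}\cdot a_{kl}=0$ (because $e_j\cdot e_k=0$), and for the left-hand side I would use the $(12)$-matching identity $(a\cdot b)*c=a*(b\cdot c)$ to write $a_{ij}*a_{kl}=(a_{ij}\cdot e_j)*a_{kl}=a_{ij}*(e_j\cdot a_{kl})=a_{ij}*0=0$.

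The heart of the argument is the case $j=k$. Here I would write $a_{ij}=e_i\cdot a_{ij}$ and apply the same $(12)$-matching identity, now in the form $(e_i\cdot a_{ij})*a_{jl}=e_i*(a_{ij}\cdot a_{jl})$, thereby collapsing the $*$-product of two homogeneous elements into a single $*$-multiplication by the idempotent $e_i$. Since $a_{ij}\cdot a_{jl}\in e_iAe_l$, the relation $e_i*b_{il}=xb_{il}$ obtained in the proof of \cref{prod(e_i*e_i)a=aprod(e_i*e_i)} (equation \cref{e_i*a_ij=ma_ij}) converts this into $x(a_{ij}\cdot a_{jl})=\vf(a_{ij}\cdot a_{jl})$, which is precisely \cref{a_ij*a_kl=vf(a_ij.a_kl)}. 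I expect the only real subtlety to be spotting that the $(12)$-matching identity absorbs the left factor into an idempotent, reducing an arbitrary $*$-product to the already computed action of $x$; once this reduction is seen, the remaining steps are immediate.
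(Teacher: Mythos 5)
Your proposal is correct and follows essentially the same route as the paper: define $\vf(a)=xa$ with $x=\prod_{i\in I}(e_i*e_i)$, invoke \cref{centroid-of-oplus-e_iAe_j,prod(e_i*e_i)a=aprod(e_i*e_i)} to get $\vf\in\G(A)$, kill the case $j\ne k$ via $(a_{ij}\cdot e_j)*a_{kl}=a_{ij}*(e_j\cdot a_{kl})=0$, and reduce the case $j=k$ to the action of $x$ already computed in the proof of \cref{prod(e_i*e_i)a=aprod(e_i*e_i)}. The only (immaterial) difference is that you absorb into the left idempotent $e_i$ and use \cref{e_i*a_ij=ma_ij}, while the paper absorbs into the right idempotent $e_k$ and uses \cref{a_ij*e_j=a_ijm} together with \cref{m.a=a.m}.
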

	\begin{proof}
		Let $\vf(a)=xa$, where $x$ is given by \cref{m=sum-e_i*e_i}. By \cref{centroid-of-oplus-e_iAe_j,prod(e_i*e_i)a=aprod(e_i*e_i)} we have $\vf\in\G(A)$. We are left to prove \cref{a_ij*a_kl=vf(a_ij.a_kl)}. If $j\ne k$, then 
		\[a_{ij}*a_{kl}=(a_{ij}\cdot e_j)*a_{kl}=a_{ij}*(e_j\cdot a_{kl})=0=\vf(a_{ij}\cdot a_{kl}).\] Now, by \cref{m.a=a.m,a_ij*e_j=a_ijm} we conclude that \[a_{ij}*a_{jk}=a_{ij}*(a_{jk}\cdot e_k)=(a_{ij}\cdot a_{jk})*e_k=(a_{ij}\cdot a_{jk})x=\vf(a_{ij}\cdot a_{jk}).\qedhere\]
	\end{proof}
	
	\begin{prop}\label{comp-prod-A-with-enough-idemp}
		Let $*$ be an associative product on $A$. Then the following are equivalent:
		\begin{enumerate}
			\item\label{*-(12)-matching} $*$ is $(12)$-matching with $\cdot$;
			\item\label{*-interchangeable} $*$ is interchangeable with $\cdot$;
			\item\label{*-totally-compatible} $*$ is totally compatible with $\cdot$;
			\item\label{*-mutation} $*$ is determined by some $\vf\in\G(A)$.
		\end{enumerate}
	\end{prop}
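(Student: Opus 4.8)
The plan is to prove the chain of equivalences by reducing everything to the already-established interchangeability case. The cheapest logical route is to show \cref{*-(12)-matching}$\impl$\cref{*-interchangeable}, then invoke \cref{interchangeable-is-totally-comp-A^2=A} (since $A$ has enough idempotents, it is idempotent, so $A\cdot A=A$) to obtain \cref{*-interchangeable}$\iff$\cref{*-totally-compatible}, and finally close the loop with the mutation/centroid description. The implications \cref{*-mutation}$\impl$\cref{*-totally-compatible} and \cref{*-totally-compatible}$\impl$\cref{*-(12)-matching} and \cref{*-totally-compatible}$\impl$\cref{*-interchangeable} are all immediate: the first by \cref{a*b=vf(ab)-tot-comp} and the last two directly from the definition \cref{totally-comp-ass-identity}.

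The substantive direction is \cref{*-(12)-matching}$\impl$\cref{*-mutation}, which is exactly what \cref{a_ij*a_jk=a_ij.m.a_jk} delivers. First I would observe that a $(12)$-matching $*$ produces, via \cref{e_i*e_i-in-e_iAe_i}, the element $x=\prod_{i\in I}(e_i*e_i)\in M$ of \cref{m=sum-e_i*e_i}, which by \cref{prod(e_i*e_i)a=aprod(e_i*e_i)} is central in $M$ in the sense that $xa=ax$ for all $a\in A$. By \cref{centroid-of-oplus-e_iAe_j} the left-multiplication map $\vf(a)=xa=ax$ is then a centroid element, and \cref{a_ij*a_jk=a_ij.m.a_jk} shows $a_{ij}*a_{kl}=\vf(a_{ij}\cdot a_{kl})$ on each graded piece. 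Since $A=\bigoplus_{i,j\in I}e_iAe_j$ and both $*$ and $\vf\circ\cdot$ are bilinear, summing over the decomposition gives $a*b=\vf(a\cdot b)$ for all $a,b\in A$, i.e. $*=*_\vf$ is determined by $\vf\in\G(A)$. This establishes \cref{*-mutation}, and hence by \cref{a*b=vf(ab)-tot-comp} that $*$ is totally compatible, so in fact \cref{*-(12)-matching}$\impl$\cref{*-totally-compatible} directly.

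With the two genuinely nontrivial implications in hand, I would assemble the equivalence as a short cycle. The cleanest write-up is: \cref{*-(12)-matching}$\impl$\cref{*-mutation} by \cref{a_ij*a_jk=a_ij.m.a_jk} together with the grading argument above; \cref{*-mutation}$\impl$\cref{*-totally-compatible} by \cref{a*b=vf(ab)-tot-comp}; \cref{*-totally-compatible}$\impl$\cref{*-interchangeable} and \cref{*-totally-compatible}$\impl$\cref{*-(12)-matching} trivially from \cref{totally-comp-ass-identity}; and finally \cref{*-interchangeable}$\iff$\cref{*-totally-compatible} via \cref{interchangeable-is-totally-comp-A^2=A} applied to the idempotent algebra $A$. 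This closes every gap.

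The step I expect to carry the most weight is the passage from the graded identity $a_{ij}*a_{kl}=\vf(a_{ij}\cdot a_{kl})$ to the global identity $a*b=\vf(a\cdot b)$. This is conceptually routine but must be stated carefully: one writes $a=\sum_{i,j}a_{ij}$ and $b=\sum_{k,l}b_{kl}$ with only finitely many nonzero homogeneous components (because elements of $A$ have finite support), then uses bilinearity of $*$ and of $\cdot$ together with the linearity of $\vf$ to distribute both products over the finite sums. No convergence subtleties arise precisely because $A$, unlike $M$, consists of finitely-supported elements, so the interchange of summation is legitimate. Everything else is bookkeeping and direct appeals to the preceding lemmas.
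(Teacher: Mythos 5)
Your proof is correct and follows essentially the same route as the paper: the substantive implication \cref{*-(12)-matching}$\impl$\cref{*-mutation} via \cref{a_ij*a_jk=a_ij.m.a_jk} (with the finite-support bilinearity argument, which the paper leaves implicit), \cref{*-mutation}$\impl$\cref{*-totally-compatible}$\impl$\cref{*-(12)-matching} as the easy closing of the cycle, and \cref{*-interchangeable}$\iff$\cref{*-totally-compatible} from \cref{interchangeable-is-totally-comp-A^2=A}. No gaps.
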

	\begin{proof}
		The implications \cref{*-mutation}$\impl$\cref{*-totally-compatible} and \cref{*-totally-compatible}$\impl$\cref{*-(12)-matching} are obvious. The implication \cref{*-(12)-matching}$\impl$\cref{*-mutation} is a consequence of \cref{a_ij*a_jk=a_ij.m.a_jk} and the equivalence \cref{*-interchangeable}$\iff$\cref{*-totally-compatible} is \cref{interchangeable-is-totally-comp-A^2=A}.
	\end{proof}

	\section{\texorpdfstring{$\sg$}{σ}-matching and interchangeable structures on free non-unital associative algebras}\label{sec-free-alg}
	
	We begin with a general result.
	\begin{prop}\label{(12)-match=>id-match-without-0-div}
		Let $(A,\cdot)$ be an associative algebra without zero divisors. Then for an associative product $*$ on $A$ the following are equivalent:
		\begin{enumerate}
			\item\label{*-(12)-matching-without-0-div} $*$ is $(12)$-matching with $\cdot$;
			\item\label{*-interchangeable-without-0-div} $*$ is interchangeable with $\cdot$;
			\item\label{*-totally-compatible-without-0-div} $*$ is totally compatible with $\cdot$.
		\end{enumerate}
	\end{prop}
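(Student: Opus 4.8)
The plan is to treat total compatibility, condition \cref{*-totally-compatible-without-0-div}, as the hub. Since \cref{totally-comp-ass-identity} is literally the conjunction of the defining identities of the other two conditions, the implications \cref{*-totally-compatible-without-0-div}$\impl$\cref{*-(12)-matching-without-0-div} and \cref{*-totally-compatible-without-0-div}$\impl$\cref{*-interchangeable-without-0-div} are immediate, and it remains to prove \cref{*-(12)-matching-without-0-div}$\impl$\cref{*-totally-compatible-without-0-div} and \cref{*-interchangeable-without-0-div}$\impl$\cref{*-totally-compatible-without-0-div}. Throughout I would use the single feature of ``no zero divisors'' that matters here, namely two-sided cancellation: $d\cdot X=d\cdot Y$ or $X\cdot d=Y\cdot d$ forces $X=Y$ whenever $d\ne 0$. (If $A=\{0\}$ all statements are vacuous, so I may fix a nonzero $d$.) In each implication the idea is to manufacture exactly one ``missing'' mixed-associativity identity, which then combines with the hypothesis to trigger one of the equivalences in \cref{tot-com-equiv-cond}.

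For \cref{*-(12)-matching-without-0-div}$\impl$\cref{*-totally-compatible-without-0-div}, the $(12)$-matching identities \cref{matching-ass-identity} read $(a\cdot b)*c=a*(b\cdot c)$ and $(a*b)\cdot c=a\cdot(b*c)$, the latter of which I rewrite as $x\cdot(y*z)=(x*y)\cdot z$. First I would left-multiply by an arbitrary $d$ and collapse the two relevant products to a common value:
\begin{align*}
	d\cdot\big((a\cdot b)*c\big)&=d\cdot\big(a*(b\cdot c)\big)=(d*a)\cdot(b\cdot c),\\
	d\cdot\big((a*b)\cdot c\big)&=\big(d\cdot(a*b)\big)\cdot c=(d*a)\cdot(b\cdot c).
\end{align*}
Cancelling a nonzero $d$ on the left then yields the interchangeable identity $(a\cdot b)*c=(a*b)\cdot c$. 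Now $*$ is $\sg$-matching and satisfies one of the equalities \cref{interchangeable-ass-identity}, so it is totally compatible with $\cdot$ by \cref{tot-com-equiv-cond}\cref{sg-match-and-inter}.

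I expect \cref{*-interchangeable-without-0-div}$\impl$\cref{*-totally-compatible-without-0-div} to be the real obstacle. Here the hypothesis is $(a\cdot b)*c=(a*b)\cdot c$ together with $a\cdot(b*c)=a*(b\cdot c)$, and the naive imitation of the previous paragraph fails: because interchangeability is symmetric in $\cdot$ and $*$, reducing $d\cdot\big((a*b)\cdot c\big)$ and $d\cdot\big(a\cdot(b*c)\big)$ only trades the two candidate matching identities $(a\cdot b)*c=a*(b\cdot c)$ and $(a*b)\cdot c=a\cdot(b*c)$ for one another, proving neither. The way around this is to pass to degree-four products and cancel on the right, where the extra factor breaks the symmetry. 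Concretely, I would verify
\begin{align*}
	\big((a*b)\cdot c\big)\cdot d&=(a*b)\cdot(c\cdot d)=(a\cdot b)*(c\cdot d)=(a\cdot b)\cdot(c*d)=a\cdot\big(b*(c\cdot d)\big)=a*(b\cdot c\cdot d),\\
	\big(a\cdot(b*c)\big)\cdot d&=a\cdot\big((b*c)\cdot d\big)=a\cdot\big((b\cdot c)*d\big)=a*\big((b\cdot c)\cdot d\big)=a*(b\cdot c\cdot d),
\end{align*}
where each equality follows from associativity of the two products and from one of the two interchange identities. Since both expressions equal $a*(b\cdot c\cdot d)$, cancelling a nonzero $d$ on the right produces the matching identity $(a*b)\cdot c=a\cdot(b*c)$. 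Thus $*$ is interchangeable and satisfies one of the equalities \cref{matching-ass-identity}, hence is totally compatible with $\cdot$ by \cref{tot-com-equiv-cond}\cref{cdot_1-and-cdot_2-sg-match-for-some-sg}, completing the proof of the equivalence.
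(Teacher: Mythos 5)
Your proof is correct, and the global strategy is the same as the paper's: use cancellation (no zero divisors) to manufacture one extra mixed identity and then feed it into \cref{tot-com-equiv-cond}. The differences are in which identity you target and how much you have to compute. For \cref{*-(12)-matching-without-0-div}$\impl$\cref{*-totally-compatible-without-0-div} the paper left-multiplies by $a$ itself (treating $a=0$ as trivial) and derives the $\id$-matching identity $a*(b\cdot c)=(a*b)\cdot c$, invoking \cref{tot-com-equiv-cond}\cref{cdot_1-and-cdot_2-sg-match-for-all-sg}; you instead derive an interchange identity and invoke \cref{tot-com-equiv-cond}\cref{sg-match-and-inter}. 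Both are fine, and your use of an auxiliary nonzero $d$ cleanly sidesteps the $a=0$ case. The more substantive divergence is in \cref{*-interchangeable-without-0-div}$\impl$\cref{*-totally-compatible-without-0-div}, where your claim that the degree-three argument must fail is not accurate: the symmetry obstruction you describe only blocks an attempt to prove the \emph{second} matching identity $(a*b)\cdot c=a\cdot(b*c)$, whereas the \emph{first} one already falls out at degree three. Indeed, $a\cdot((a\cdot b)*c)=a*((a\cdot b)\cdot c)=a*(a\cdot(b\cdot c))=a\cdot(a*(b\cdot c))$ uses only the identity $x\cdot(y*z)=x*(y\cdot c z)\big|_{z\mapsto z}$ --- that is, $x\cdot(y*z)=x*(y\cdot z)$ --- twice, together with associativity of $\cdot$; cancelling $a$ on the left gives $(a\cdot b)*c=a*(b\cdot c)$, which combined with interchangeability triggers \cref{tot-com-equiv-cond}\cref{cdot_1-and-cdot_2-sg-match-for-some-sg}. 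This is exactly the paper's proof. Your degree-four computation with right cancellation is nevertheless correct and reaches the same conclusion (via the other matching identity), just less economically.
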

	\begin{proof}
		\cref{*-(12)-matching-without-0-div}$\impl$\cref{*-totally-compatible-without-0-div}. Let $*$ be $(12)$-matching with $\cdot$. Then for all $a,b,c\in A$ we have 
		\[a\cdot (a*(b\cdot c))=(a*a)\cdot(b\cdot c)=((a*a)\cdot b)\cdot c=(a\cdot(a*b))\cdot c=a\cdot((a*b)\cdot c).\]
		Since $(A,\cdot)$ has no zero divisors, $a\cdot (a*(b\cdot c))=a\cdot((a*b)\cdot c)$ implies $a*(b\cdot c)=(a*b)\cdot c$. Thus, $*$ is totally compatible with $\cdot$ by \cref{tot-com-equiv-cond}\cref{cdot_1-and-cdot_2-sg-match-for-all-sg}.
		
		\cref{*-interchangeable-without-0-div}$\impl$\cref{*-totally-compatible-without-0-div}. Let $*$ be interchangeable with $\cdot$. Then for all $a,b,c\in A$ we have 
		\[a\cdot ((a\cdot b)*c)=a*((a\cdot b)\cdot c)=a*(a\cdot (b\cdot c))=a\cdot(a*(b\cdot c)).\]
		As above, $a\cdot ((a\cdot b)*c)=a\cdot(a*(b\cdot c))$ yields $(a\cdot b)*c=a*(b\cdot c)$, because $(A,\cdot)$ has no zero divisors. Thus, $*$ is totally compatible with $\cdot$ by \cref{tot-com-equiv-cond}\cref{cdot_1-and-cdot_2-sg-match-for-some-sg}.
		
		The implications \cref{*-totally-compatible-without-0-div}$\impl$\cref{*-(12)-matching-without-0-div} and \cref{*-totally-compatible-without-0-div}$\impl$\cref{*-interchangeable-without-0-div} are trivial.
	\end{proof}
	Clearly, $\id$-matching structures on an algebra without zero divisors are not in general totally compatible: as a counterexample it is enough to take a non-commutative associative unital division algebra (see \cref{mutation-by-non-central-element}).
	
	Free algebras are a classical example of algebras with no zero divisors. In view of \cref{comp-prod-unital-algebra}, we will only be interested in the non-unital case.
	
	\subsection{The free non-unital associative algebra}
	Fix a set of variables $X$ with $|X|>1$. Let $X^*$ (resp. $X^+$) be the free monoid (resp. free semigroup) over $X$. The elements of $X^*$ (resp. $X^+$) are all the words (resp. non-empty words) over $X$, where the product of any two words $w_1$ and $w_2$ is their concatenation $w_1w_2$. Let $(K\gen X,\cdot)$ (resp. $(K \gen X^+,\cdot)$) be the free associative (resp. free \textit{non-unital} associative) $K$-algebra over $X$. Observe that $K \gen X$ (resp. $K \gen X^+$) is the semigroup $K$-algebra of $X^*$ (resp. $X^+$). Recall from~\cite[6.1]{Bresar-noncomm-alg} that $K\gen X$ is a domain. Clearly, $K \gen X^+$ is an ideal in $K\gen X$ and 
	\begin{align}\label{K<X>^+=oplus-x-K<X>}
		K\gen X^+=\bigoplus_{x\in X}x\cdot K\gen X=\bigoplus_{x\in X}K\gen X\cdot x.
	\end{align}

	\begin{prop}\label{id-matching-free-noncomm}
		The $\id$-matching structures on $(K\gen X^+,\cdot)$ are in a one-to-one correspondence with maps $\star:X\times X\to K\gen X^+$, 
		\begin{align}\label{x-star-y=sum-u.R_xyu=sum-L_xyv.v}
			(x,y)\mapsto x\star y=\sum_{u\in X}u\cdot R_{x,y,u}=\sum_{v\in X}L_{x,y,v}\cdot v,
		\end{align} 
		where $R_{x,y,u},L_{x,y,v}\in K\gen X$ and
		\begin{align}\label{(x-star-y)*z=x*(y-star-z)}
			\sum_{v\in X}L_{x,y,v}\cdot(v\star z)=\sum_{u\in X}(x\star u)\cdot R_{y,z,u}
		\end{align}
		for all $x,y,z\in X$.
	\end{prop}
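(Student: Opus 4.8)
The plan is to show that an $\id$-matching $*$ is entirely recovered from its restriction to the letters, and then to read off precisely which restrictions occur. Writing the two identities of \eqref{matching-ass-identity} for $\sg=\id$ with $\cdot_1=\cdot$ and $\cdot_2=*$, they become $(a\cdot b)*c=a\cdot(b*c)$ and $(a*b)\cdot c=a*(b\cdot c)$. Given an $\id$-matching $*$, put $x\star y:=x*y$ for $x,y\in X$; since $x\star y\in K\gen{X^+}$, the direct sum decompositions \eqref{K<X>^+=oplus-x-K<X>} furnish unique $R_{x,y,u},L_{x,y,v}\in K\gen X$ giving the two expressions in \eqref{x-star-y=sum-u.R_xyu=sum-L_xyv.v}, so the form of $\star$ is automatic and the only genuine constraint will be \eqref{(x-star-y)*z=x*(y-star-z)}. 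The first key step is a reduction formula: for words $w_1=x_1\cdots x_m$ and $w_2=y_1\cdots y_n$, applying $(a\cdot b)*c=a\cdot(b*c)$ repeatedly strips letters off the front of $w_1$ and applying $(a*b)\cdot c=a*(b\cdot c)$ repeatedly strips letters off the back of $w_2$, yielding
\[
w_1*w_2=(x_1\cdots x_{m-1})\cdot(x_m\star y_1)\cdot(y_2\cdots y_n),
\]
where empty prefixes or suffixes are read as the unit of $K\gen X$. As words form a basis of $K\gen{X^+}$, this shows that $*$ is determined by $\star$.

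For the forward implication I would derive \eqref{(x-star-y)*z=x*(y-star-z)} from the associativity of $*$ on a triple $(x,y,z)$ of letters. Writing $x*y=\sum_v L_{x,y,v}\cdot v$ and using $(a\cdot b)*c=a\cdot(b*c)$ (noting that the possible constant term of $L_{x,y,v}\in K\gen X$ causes no trouble) gives $(x*y)*z=\sum_v L_{x,y,v}\cdot(v\star z)$, and symmetrically $x*(y*z)=\sum_u (x\star u)\cdot R_{y,z,u}$; equating the two is exactly \eqref{(x-star-y)*z=x*(y-star-z)}.

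Conversely, given any $\star\colon X\times X\to K\gen{X^+}$ satisfying \eqref{(x-star-y)*z=x*(y-star-z)}, I would define $*$ on words by the reduction formula above and extend it bilinearly. The two matching identities then follow by a direct comparison of both sides under the formula. For associativity, the matching identities first reduce $(w_1*w_2)*w_3=w_1*(w_2*w_3)$ to the case where $w_1=x$ and $w_3=z$ are single letters (strip the prefix of $w_1$ and the suffix of $w_3$); writing $w_2=y_1\cdots y_n$, when $n\ge 2$ both sides collapse to $(x\star y_1)\cdot(y_2\cdots y_{n-1})\cdot(y_n\star z)$ and equality is automatic, while for $n=1$ the two sides are precisely those of \eqref{(x-star-y)*z=x*(y-star-z)}. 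Since restriction to letters and this reconstruction are mutually inverse, the correspondence is a bijection.

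The main obstacle is the associativity check for the reconstructed product: it requires carefully handling empty prefixes and suffixes and the constant terms of $L_{x,y,v},R_{x,y,u}\in K\gen X$ when invoking the matching identities, and pinning down the single configuration---the middle word of length one---in which the compatibility relation \eqref{(x-star-y)*z=x*(y-star-z)} is actually used. Everything else is a mechanical unwinding of the reduction formula.
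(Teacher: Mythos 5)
Your proposal is correct and follows essentially the same route as the paper: restrict $*$ to letters, prove the reduction formula $a*b=a_1\cdot(x\star y)\cdot b_1$, derive \cref{(x-star-y)*z=x*(y-star-z)} from associativity on a triple of letters, and in the converse verify associativity by the same case split on whether the middle word has length one (where the relation is used) or greater (where both sides collapse automatically). The subtleties you flag --- empty prefixes/suffixes and constant terms of $L_{x,y,v}$, handled by bilinearity --- are exactly the ones implicit in the paper's argument.
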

	\begin{proof}
		Let $*$ be an $\id$-matching structure on $(K\gen X^+,\cdot)$. 
		
		Given $x,y\in X$, write $x*y=\sum_{u\in X}u\cdot R_{x,y,u}=\sum_{v\in X}L_{x,y,v}\cdot v$ according to \cref{K<X>^+=oplus-x-K<X>}. Then 
		\[(x*y)*z=\sum_{v\in X}(L_{x,y,v}\cdot v)*z=\sum_{v\in X}L_{x,y,v}\cdot (v*z)\] and 
		\[x*(y*z)=\sum_{u\in X}x*(u\cdot R_{y,z,u})=\sum_{u\in X}(x*u)\cdot R_{y,z,u}\] for all $x,y,z\in X$. Thus, defining $\star$ to be the restriction of $*$ to $X\times X$, we get \cref{x-star-y=sum-u.R_xyu=sum-L_xyv.v,(x-star-y)*z=x*(y-star-z)} by the associativity of $*$.
		
		Conversely, let $\star:X\times X\to K\gen X^+$ satisfying \cref{x-star-y=sum-u.R_xyu=sum-L_xyv.v,(x-star-y)*z=x*(y-star-z)}. Given $a,b\in X^+$, there are unique $x,y\in X$ and $a_1,b_1\in X^*$, such that $a=a_1x$ and $b=yb_1$. Then define 
		\begin{align}\label{(a_1x)*(yb_1)=a_1(x*y)b_1}
			a*b=a_1\cdot(x\star y)\cdot b_1.
		\end{align}
		In particular, $x*y=x\star y$ for all $x,y\in X$. The product \cref{(a_1x)*(yb_1)=a_1(x*y)b_1} uniquely extends to a bilinear product $*$ on the whole $K\gen X^+$. Taking additionally $c\in X^+$, since $bc=yb_1c$, by \cref{(a_1x)*(yb_1)=a_1(x*y)b_1} we have
		\begin{align}\label{(a*b)c-is-a*(bc)}
			(a*b)\cdot c=(a_1\cdot (x\star y)\cdot b_1)\cdot c=a_1\cdot (x\star y)\cdot (b_1c)=a*(bc).
		\end{align}
		Moreover, if $b=b_2u$ and $c=zc_1$ with $u,z\in X$ and $b_2,c_1\in X^*$, then 
		\begin{align}\label{(ab)*c-is-a(b*c)}
			(ab)*c=(ab_2u)*(zc_1)=(ab_2)\cdot (u\star z)\cdot c_1=a\cdot (b_2\cdot (u\star z)\cdot c_1)=a\cdot (b*c).
		\end{align}
		
		Thus, we are left to prove that $*$ is associative. Observe from \cref{(a*b)c-is-a*(bc),(ab)*c-is-a(b*c),x-star-y=sum-u.R_xyu=sum-L_xyv.v} that \cref{(x-star-y)*z=x*(y-star-z)} is equivalent to
		\begin{align}\label{(x-star-y)*z-is-x*(y-star-z)}
			(x\star y)*z=x*(y\star z)
		\end{align}
		for all $x,y,z\in X$. Let $a=a_1x$, $b=yb_1$ and $c=zc_1$ with $x,y,z\in X$ and $a_1,b_1,c_1\in X^*$. Then by \cref{(a*b)c-is-a*(bc),(ab)*c-is-a(b*c)} we have
		\begin{align}\label{(a*b)*c=a_1(((x*y)b_1)*z)c_1}
			(a*b)*c=(a_1\cdot (x\star y)\cdot b_1)*c=a_1\cdot(((x\star y)\cdot b_1)*z)\cdot c_1.
		\end{align}
		
		\textit{Case 1.} $b_1$ is empty. Then using \cref{(a_1x)*(yb_1)=a_1(x*y)b_1,(a*b)*c=a_1(((x*y)b_1)*z)c_1,(x-star-y)*z-is-x*(y-star-z),(ab)*c-is-a(b*c)} we have \[(a*b)*c=a_1\cdot((x\star y)*z)\cdot c_1=a_1\cdot(x*(y\star z))\cdot c_1=(a_1x)*((y\star z)\cdot c_1)=a*(b*c).\]
		
		\textit{Case 2.} $b_1=b_2u$ for some $u\in X$ and $b_2\in X^*$. Then using \cref{(a_1x)*(yb_1)=a_1(x*y)b_1,(a*b)*c=a_1(((x*y)b_1)*z)c_1,(a*b)c-is-a*(bc),(ab)*c-is-a(b*c)} we have 
		\begin{align*}
			(a*b)*c&=a_1\cdot(((x\star y)\cdot b_2u)*z)\cdot c_1=a_1\cdot((x\star y)\cdot b_2\cdot (u*z))\cdot c_1\\ & =(a_1\cdot(x\star y))\cdot(b_2\cdot(u\star z)\cdot c_1)=(a*y)\cdot(b_1*c)\\ &=a*(y\cdot(b_1*c))=a*((yb_1)*c)=a*(b*c).\qedhere\end{align*}
	\end{proof}
	
	\begin{rem}
		We are unable to describe the maps $\star:X\times X\to K\gen X^+$ satisfying \cref{x-star-y=sum-u.R_xyu=sum-L_xyv.v,(x-star-y)*z=x*(y-star-z)} in simpler terms, so let us just give some natural examples. Any associative product $\star:X\times X\to X$, i.e. a semigroup structure on $X$, is of this form. On the other hand, any mutation $x\star y=x\cdot_p y$, where $p\in K\gen X$, is also of this form. We suspect that there also exist examples of more complicated maps $\star$.
	\end{rem}
	
	\begin{lem}\label{x*y=lb.xy-K<X>^+}
		Let $*$ be a totally compatible structure on $(K\gen X^+,\cdot)$. Then there exists $\lb\in K$ such that $x*y=\lb(x\cdot y)$ for all $x,y\in X$.
	\end{lem}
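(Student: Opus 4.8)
The plan is to extract from total compatibility the single cubic identity I actually need and then run a coefficient analysis in the free algebra. Among the four equal expressions of \cref{totally-comp-ass-identity} (taken with $\cdot_1=\cdot$, $\cdot_2=*$) I would keep only $(a*b)\cdot c=a\cdot(b*c)$, which holds for all $a,b,c\in K\gen X^+$. Writing $p_{xy}:=x*y$ and restricting to generators, this becomes
\begin{equation*}
	p_{xy}\cdot z=x\cdot p_{yz}\qquad(x,y,z\in X).\tag{$\star$}
\end{equation*}
Everything below is read off $(\star)$. Expanding each $p_{xy}=\sum_w c^{xy}_w\,w$ over non-empty words $w\in X^+$, and noting that the words of $p_{xy}\cdot z$ are precisely the $wz$ while those of $x\cdot p_{yz}$ are precisely the $xw'$, a direct comparison shows that every word occurring in $p_{xy}$ begins with $x$ and, after relabelling the indices, ends with $y$.

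The core of the proof is a \emph{second-letter propagation}. For a word $w$ of $p_{xy}$ with $|w|\ge 2$, comparing in $(\star)$ the coefficient of $wz$ gives $c^{xy}_w=c^{yz}_{w_{>1}z}$, where $w_{>1}$ denotes $w$ with its first letter removed; since every word of $p_{yz}$ begins with $y$, this forces the second letter of $w$ to equal $y$. I would then iterate this once more: if $|w|\ge 3$, the word $w_{>1}z$ lies in $p_{yz}$ and has length $\ge 2$, so by the same rule its second letter, which is the third letter of $w$, must equal $z$. But $(\star)$ holds for \emph{every} $z\in X$ while the third letter of $w$ is fixed, so choosing $z$ different from it (possible since $|X|>1$) makes the right-hand side vanish and yields $c^{xy}_w=0$. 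Hence $p_{xy}$ is supported on words of length $\le 2$, and the only length-$2$ candidate beginning with $x$ and ending with $y$ is $xy$; set $\lb_{xy}:=c^{xy}_{xy}$.

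It remains to identify the coefficients and discard the diagonal surplus. Comparing the coefficient of $xyz$ in $(\star)$ gives $\lb_{xy}=\lb_{yz}$ for all $x,y,z$, from which a one-line argument yields a single scalar $\lb$ with $\lb_{xy}=\lb$ for all $x,y$. Finally, a length-$1$ word can only survive on the diagonal, so a priori $p_{xx}=\lb\,xx+\mu_x\,x$; comparing the coefficient of $xz$ in $(\star)$ with the indices $(x,x,z)$ for any $z\ne x$ (again using $|X|>1$) forces $\mu_x=0$. Therefore $x*y=\lb\,xy=\lb(x\cdot y)$ for all $x,y\in X$, as required.

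The main obstacle is the elimination of words of length $\ge 3$: a crude length bound does not present itself, and the clean mechanism is the propagation above. The one point to state with care is that $(\star)$ is an identity \emph{for all} $z\in X$ simultaneously; this is precisely what, together with $|X|>1$, lets one force the higher-length coefficients — and the diagonal length-$1$ coefficient — to vanish.
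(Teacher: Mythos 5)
Your proof is correct, and it reaches the conclusion by a genuinely different mechanism than the paper. Both arguments use only the middle equality $(a*b)\cdot c=a\cdot(b*c)$ of total compatibility, and both begin by observing that every word of $x*y$ starts with $x$ and ends with $y$. The paper then writes $x*y=x\cdot p_{x,y}\cdot y$ and $x*x=p_x\cdot x=x\cdot q_x$, notes that $p_x=q_x$ commutes with $x$, and appeals to Bergman's centralizer theorem (or an induction on degree) to get $p_x\in K\gen{x}$; the relations $x\cdot p_{x,y}=q_x$ and $p_{x,y}\cdot y=p_y$ then give $p_{x,y}\in K\gen{x}\cap K\gen{y}=K$, followed by a short case analysis removing the dependence on $(x,y)$. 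You replace all of this by the ``second-letter propagation'': the coefficient identity $c^{xy}_w=c^{yz}_{w_{>1}z}$ forces the second letter of any surviving word to be $y$, and iterating once more would force its third letter to equal the arbitrarily chosen $z$, which is impossible since $|X|>1$; this kills every word of length at least $3$ at a stroke, and the remaining comparisons (equality of the $\lb_{xy}$, vanishing of the diagonal linear term $\mu_x$) are routine and correctly carried out. What your route buys is the complete avoidance of Bergman's theorem and of the intersection $K\gen{x}\cap K\gen{y}=K$, at the price of slightly more bookkeeping with words; both routes ultimately exploit $|X|>1$, the paper through the choice of a second variable $y\ne x$ and you through the freedom in the third variable $z$.
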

	\begin{proof}
		For all $x\ne y$ in $X$ we have 
		\begin{align}\label{x.(y*y)=(x*x).y}
			x\cdot(x*y)=(x*x)\cdot y,
		\end{align}
		so $x*y\in K\gen X\cdot y$ and $x*x\in x\cdot K\gen X$. Similarly, 
		\begin{align}\label{x.(y*y)=(x*y).y}
			x\cdot (y*y)=(x*y)\cdot y
		\end{align}
		implies $x*y\in x\cdot K\gen X$ and $x*x\in K\gen X\cdot x$. Thus, there exist $p_{x,y}\in K\gen X$, $x\ne y$, such that 
		\begin{align}\label{x*y=x.p_xy.y}
			x*y=x\cdot p_{x,y}\cdot y,
		\end{align}
		and $p_x,q_x\in K\gen X$ such that 
		\begin{align}\label{x*x=p_x.x=x.q_x}
			x*x=p_x\cdot x=x\cdot q_x.
		\end{align}
		It follows from $(x*x)\cdot x=x\cdot(x*x)$ that $p_x=q_x$, which commutes with $x$ by \cref{x*x=p_x.x=x.q_x}. By Bergman's centralizer theorem~\cite{Bergman69} (or directly by induction on the degree of $p_x$) one concludes that $p_x\in K\gen x$. Now, \cref{x.(y*y)=(x*x).y} gives 
		\begin{align}\label{x.p_xy=q_x}
			x\cdot p_{x,y}=q_x
		\end{align}
		and \cref{x.(y*y)=(x*y).y} gives 
		\begin{align}\label{p_xy.y=p_y}
			p_{x,y}\cdot y=p_y,
		\end{align}
		so that $p_{x,y}\in K\gen x\cap K\gen y=K$. It also follows from \cref{x.p_xy=q_x} that $p_{x,y}=p_{x,v}$ for all $y,v\ne x$. Similarly, \cref{p_xy.y=p_y} gives $p_{x,y}=p_{u,y}$ for all $x,u\ne y$. Let $u\ne v$ and $x\ne y$. If $u\ne y$, then $p_{u,v}=p_{u,y}=p_{x,y}$. If $v\ne x$, then $p_{u,v}=p_{x,v}=p_{x,y}$. If $(u,v)=(y,x)$, then $(x*y)\cdot x=x\cdot(u*v)$ yields $p_{x,y}\cdot y=y\cdot p_{u,v}$, whence $p_{x,y}=p_{u,v}$ because $p_{x,y},p_{u,v}\in K$. Thus, $p_{x,y}$ is a scalar $\lb\in K$ that does not depend on $x$ and $y$, so the result follows by \cref{x*y=x.p_xy.y,x.p_xy=q_x,x*x=p_x.x=x.q_x}.
	\end{proof}
	
	The result of the next corollary is probably well-known, but we couldn't find a reference.
	\begin{cor}
		We have $\G(K\gen X^+)=K$.
	\end{cor}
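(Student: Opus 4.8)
The plan is to mimic the proof of the analogous statement $\G(A)=K$ for the rectangular band, exploiting \cref{a*b=vf(ab)-tot-comp} and \cref{x*y=lb.xy-K<X>^+}. Given $\vf\in\G(K\gen X^+)$, I would first form the product $*_\vf$ defined by $a*_\vf b=\vf(a\cdot b)$. Since $\cdot$ is associative, \cref{a*b=vf(ab)-tot-comp} guarantees that $*_\vf$ is a totally compatible structure on $(K\gen X^+,\cdot)$. Applying \cref{x*y=lb.xy-K<X>^+} to $*_\vf$ then yields a scalar $\lb\in K$ with $\vf(x\cdot y)=x*_\vf y=\lb(x\cdot y)$ for all $x,y\in X$.

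The next step is to pin down $\vf$ on the generators. The lemma only controls $\vf$ on words of length two, so I would propagate this information down to length one using the domain structure of $K\gen X$. Since $|X|>1$, choose $y\in X$ with $y\ne x$; then the centroid identity gives $\vf(x)\cdot y=\vf(x\cdot y)=\lb(x\cdot y)$, so that $(\vf(x)-\lb x)\cdot y=0$ in the domain $K\gen X$. As $y\ne 0$, it follows that $\vf(x)=\lb x$ for every $x\in X$.

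Finally, I would propagate up to all words using the centroid identity once more. For a basis word $w=x_1\cdots x_n$ with $n\ge 2$, writing $w=x_1\cdot(x_2\cdots x_n)$ with both factors in $K\gen X^+$ gives $\vf(w)=\vf(x_1)\cdot(x_2\cdots x_n)=\lb x_1\cdot(x_2\cdots x_n)=\lb w$, while the length-one case is exactly the previous step. By linearity $\vf=\lb\id$, and since every scalar map clearly lies in $\G(K\gen X^+)$, we obtain $\G(K\gen X^+)=K$.

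There is no serious obstacle here once \cref{x*y=lb.xy-K<X>^+} is in hand; the only subtlety is that the totally compatible product $*_\vf$ detects $\vf$ directly only on products of two generators, so the absence of zero divisors in $K\gen X$ together with the defining identities of the centroid are what allow one to recover $\vf$ first on generators and then on arbitrary words.
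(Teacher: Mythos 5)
Your proposal is correct and follows essentially the same route as the paper: form $*_\vf$, invoke \cref{a*b=vf(ab)-tot-comp} and \cref{x*y=lb.xy-K<X>^+} to get $\vf(x\cdot y)=\lb(x\cdot y)$, cancel (using that $K\gen X$ is a domain) to obtain $\vf(x)=\lb x$, and propagate to longer words via the centroid identity. The only cosmetic difference is your choice of $y\ne x$ in the cancellation step, which is not needed since $y=x$ works equally well in a domain.
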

	\begin{proof}
		Let $\vf\in\G(K\gen X^+)$ and $*_\vf$ be the corresponding totally compatible product as in \cref{a*b=vf(ab)-tot-comp}. Then there exists $\lb\in K$ such that $\vf(x\cdot y)=x*_\vf y=\lb(x\cdot y)$ for all $x,y\in X$  by \cref{x*y=lb.xy-K<X>^+}. On the other hand, $\vf(x\cdot y)=\vf(x)\cdot y$, whence $\vf(x)=\lb x$ for all $x\in X$. Let now $a=xa_1\in X^+$, where $x\in X$ and $a_1\in X^+$. Then $\vf(a)=\vf(xa_1)=\vf(x)\cdot a_1=\lb x\cdot a_1=\lb a$. By linearity, $\vf=\lb\id$.
	\end{proof}
	
	\begin{prop}\label{totally-comp-free-noncomm}
		Let $*$ be an associative product on $K\gen X^+$. Then the following are equivalent:
		\begin{enumerate}
			\item\label{*-(12)-matching-K<x>^+} $*$ is $(12)$-matching with $\cdot$;
			\item\label{*-interchangeable-K<x>^+} $*$ is interchangeable with $\cdot$;
			\item\label{*-totally-compatible-K<x>^+} $*$ is totally compatible with $\cdot$;
			\item\label{a*b=vf(ab)-K<x>^+} $*$ is determined by some $\vf\in \G(K\gen X^+)$.
		\end{enumerate}
	\end{prop}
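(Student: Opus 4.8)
The plan is to bootstrap this statement from two facts already available in the excerpt: the general equivalence for zero-divisor-free algebras, and the explicit structure theorem for $\id$-matching products on the free algebra. Since $K\gen{X^+}$ is an ideal of the domain $K\gen{X}$, it has no zero divisors, so \cref{(12)-match=>id-match-without-0-div} immediately delivers the equivalence of \cref{*-(12)-matching-K<x>^+}, \cref{*-interchangeable-K<x>^+} and \cref{*-totally-compatible-K<x>^+}. All that remains is to link these with condition \cref{a*b=vf(ab)-K<x>^+}, and I would prove this as the cycle \cref{a*b=vf(ab)-K<x>^+}$\impl$\cref{*-totally-compatible-K<x>^+}$\impl$\cref{a*b=vf(ab)-K<x>^+}.

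The easy direction \cref{a*b=vf(ab)-K<x>^+}$\impl$\cref{*-totally-compatible-K<x>^+} is a direct appeal to \cref{a*b=vf(ab)-tot-comp}: a product determined by a centroid element is totally compatible with the associative product $\cdot$. For the substantive direction \cref{*-totally-compatible-K<x>^+}$\impl$\cref{a*b=vf(ab)-K<x>^+}, I would first observe that total compatibility of $*$ with $\cdot$ forces both $(a\cdot b)*c=a\cdot(b*c)$ and $(a*b)\cdot c=a*(b\cdot c)$, so that $*$ is in particular $\id$-matching with $\cdot$. Hence by \cref{id-matching-free-noncomm} the product $*$ is completely determined by its restriction $\star$ to $X\times X$ via $(a_1x)*(yb_1)=a_1\cdot(x\star y)\cdot b_1$ (formula \cref{(a_1x)*(yb_1)=a_1(x*y)b_1}). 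On the other hand, \cref{x*y=lb.xy-K<X>^+} furnishes a scalar $\lb\in K$ with $x\star y=x*y=\lb(x\cdot y)$ for all $x,y\in X$. Substituting this into the defining formula gives, for arbitrary $a=a_1x$ and $b=yb_1$ in $X^+$, the equality $a*b=a_1\cdot\lb(x\cdot y)\cdot b_1=\lb(a\cdot b)$, and by bilinearity $a*b=\lb(a\cdot b)$ on all of $K\gen{X^+}$. Setting $\vf=\lb\,\id$, which is trivially in $\G(K\gen{X^+})$, we conclude $a*b=\vf(a\cdot b)$, i.e. $*$ is determined by $\vf$.

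I do not expect a genuine obstacle here, because the hard analytic content has already been isolated in \cref{x*y=lb.xy-K<X>^+}, where Bergman's centralizer theorem is used to pin the restriction of $*$ to generators down to a single uniform scalar multiple of $\cdot$. Once that scalar $\lb$ is secured, the only remaining work is to propagate the identity $x*y=\lb(x\cdot y)$ from generators to arbitrary words, and this is purely formal bookkeeping mediated by the structure theorem \cref{id-matching-free-noncomm}. The one point worth stating carefully is the reduction from total compatibility to $\id$-matching, since it is this step—rather than the a priori weaker $(12)$-matching—that licenses the use of the $\id$-matching description.
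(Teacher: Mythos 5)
Your proposal is correct and follows essentially the same route as the paper: the equivalence of \cref{*-(12)-matching-K<x>^+}, \cref{*-interchangeable-K<x>^+} and \cref{*-totally-compatible-K<x>^+} via \cref{(12)-match=>id-match-without-0-div}, the easy implication via \cref{a*b=vf(ab)-tot-comp}, and the key step \cref{*-totally-compatible-K<x>^+}$\impl$\cref{a*b=vf(ab)-K<x>^+} by combining \cref{x*y=lb.xy-K<X>^+} with the $\id$-matching identities to propagate $x*y=\lb(x\cdot y)$ from generators to arbitrary words. The only cosmetic difference is that you route the propagation step through \cref{id-matching-free-noncomm}, whereas the paper performs the one-line computation $a_1x*yb_1=a_1\cdot(x*y)\cdot b_1$ directly.
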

	\begin{proof}
		In view of \cref{(12)-match=>id-match-without-0-div,a*b=vf(ab)-tot-comp} we only need to prove \cref{*-totally-compatible-K<x>^+}$\impl$\cref{a*b=vf(ab)-K<x>^+}. Let $*$ be totally compatible with $\cdot$. By \cref{x*y=lb.xy-K<X>^+} there exists $\vf=\lb\id\in\G(K\gen X^+)$, such that $x*y=\vf(x\cdot y)$ for all $x,y\in X$. Then, given $a=a_1x$ and $b=yb_1$ with $x,y\in X$ and $a_1,b_1\in X^*$, we have 
		\begin{align*}
			a*b
			&=a_1x*yb_1\\
			&=a_1\cdot(x*y)\cdot b_1\\
			&=a_1\cdot\lb(x\cdot y)\cdot b_1\\
			&=\lb(a_1x\cdot yb_1)\\
			&=\lb(a\cdot b)\\
			&=\vf(a\cdot b).
			\qedhere
		\end{align*}
	\end{proof}
	
	\subsection{The free non-unital commutative associative algebra}
	
	Fix a non-empty set of variables $X$. Let $(K[X],\cdot)$ (resp. $(K[X]^+,\cdot)$) be the free commutative (resp. free \textit{non-unital} commutative) associative $K$-algebra over $X$. Observe that $K[X]$ is the algebra of polynomials in $x\in X$ over $K$ and $K[X]^+$ is its ideal of polynomials with zero constant term. It is clear that $K[X]$ is an integral domain and $K[X]^+=\sum_{x\in X}xK[X]$ (the sum is not direct for $|X|>1$).

	\begin{lem}\label{descr-G(K[X])^+}
		Let $\vf:K[X]^+\to K[X]^+$ be a linear map. Then $\vf\in\G(K[X]^+)$ if and only if there exists $p\in K[X]$ such that 
		\begin{align}\label{vf(a)-is-pa}
			\vf(a)=pa
		\end{align}
		for all $a\in K[X]^+$.
	\end{lem}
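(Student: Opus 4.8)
The plan is to treat the two implications separately: the sufficiency is a one-line verification, while for the necessity I would evaluate $\vf$ on a single variable and then propagate the result to all of $K[X]^+$ via the centroid identities, crucially exploiting that $K[X]$ is an integral domain. The overall strategy parallels that of \cref{centroid-of-oplus-e_iAe_j}, with division in the domain $K[X]$ playing the role of the idempotent bookkeeping used there.

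For the sufficiency, suppose $\vf(a)=pa$ for some fixed $p\in K[X]$. Since the constant term of $pa$ is the product of the constant terms of $p$ and $a$, and $a\in K[X]^+$ has zero constant term, the map $\vf$ indeed sends $K[X]^+$ into itself. The required identities $\vf(a\cdot b)=p(ab)=(pa)b=\vf(a)\cdot b$ and $p(ab)=a(pb)=a\cdot\vf(b)$ are then immediate from the commutativity and associativity of $\cdot$, so $\vf\in\G(K[X]^+)$.

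For the necessity, I would fix a variable $x_0\in X$ and set $w:=\vf(x_0)\in K[X]^+$. Applying the centroid identities to the product $a\cdot x_0$ gives $\vf(a)\cdot x_0=\vf(a\cdot x_0)=a\cdot\vf(x_0)=a\,w$ for every $a\in K[X]^+$. Once the divisibility $x_0\mid w$ is known, I can write $w=x_0p$ with $p\in K[X]$, so the displayed equality becomes $\vf(a)\,x_0=(pa)\,x_0$; cancelling $x_0$ in the domain $K[X]$ yields $\vf(a)=pa$ for all $a\in K[X]^+$, as desired. Thus the entire argument reduces to establishing $x_0\mid\vf(x_0)$.

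The main obstacle is precisely this divisibility, and it is here that the cardinality of $X$ matters. If $|X|>1$, I would choose a second variable $x_1\ne x_0$; from $x_0\cdot\vf(x_1)=\vf(x_0x_1)=w\,x_1$ we see that $x_0\mid w\,x_1$, and since $x_0$ is a prime element of $K[X]$ not dividing $x_1$, this forces $x_0\mid w$. If $|X|=1$, the divisibility is automatic, since then $K[X]^+=x_0K[X]$ consists of multiples of $x_0$, so $w=\vf(x_0)\in x_0K[X]$ from the outset. In either case $w=x_0p$ with $p\in K[X]$, which completes the proof.
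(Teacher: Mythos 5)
Your proof is correct and takes essentially the same approach as the paper's: the same case split on $|X|$, the same use of the centroid identity $x\cdot\vf(y)=\vf(xy)=y\cdot\vf(x)$ to extract the divisibility $x_0\mid\vf(x_0)$, and the same propagation by linearity. The only (harmless) difference is that you propagate via the single relation $\vf(a)\cdot x_0=a\cdot\vf(x_0)$ together with cancellation in the integral domain $K[X]$, which lets you get by with establishing divisibility for just one variable rather than all of them.
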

	\begin{proof}
		It is obvious that a map of the form \cref{vf(a)-is-pa} belongs to $\G(K[X]^+)$. Conversely, let $\vf\in\G(K[X]^+)$. 
		
		\textit{Case 1.} $X=\{x\}$. Then defining $p:=\frac{\vf(x)}x\in K[X]$ we have 
		\[\vf(x^n)=x^{n-1}\vf(x)=x^{n-1}\cdot xp=x^np\]
		for all $n\in\Z_+$, whence \cref{vf(a)-is-pa} by linearity.
		
		\textit{Case 2.} $|X|>1$. Then choosing $x\ne y$ in $X$ we have 
		\begin{align}\label{xvf(y)=vf(xy)=yvf(x)}
			x\vf(y)=\vf(xy)=y\vf(x).
		\end{align}
		Since $x\ne y$, it follows that $\vf(x)$ is divisible by $x$ (in $K[X]$), i.e. there exists $p_x\in K[X]$ such that $\vf(x)=xp_x$. Then \cref{xvf(y)=vf(xy)=yvf(x)} gives $xyp_y=xyp_x$, so $p_x=p_y$, which will be denoted by $p$. We thus obtain \cref{vf(a)-is-pa} for $a$ being a variable. Now, for any $x\in X$ and $a_1\in K[X]^+$ we have $\vf(xa_1)=\vf(x)a_1=pxa_1$, so \cref{vf(a)-is-pa} holds for monomials of higher degree and thus for all $a\in K[X]^+$ by linearity.
	\end{proof}

	\begin{lem}\label{(xa_1)*(yb_1)-in-terms-of-(x*y)}
		Let $*$ be an $\id$-matching structure on $(K[X]^+,\cdot)$. Then for all $x,y\in X$ and $a_1,b_1\in K[X]$ we have
		\begin{align*}
			(xa_1)*(yb_1)=(x*y)a_1b_1.
		\end{align*}
	\end{lem}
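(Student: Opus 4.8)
The plan is to extract the polynomial factors $a_1$ and $b_1$ one at a time by repeatedly applying the two $\id$-matching identities, in the same spirit as the non-commutative computation in \cref{id-matching-free-noncomm}; commutativity of $\cdot$ then lets me collect the extracted factors. Writing the $\id$-matching condition \cref{matching-ass-identity} with $\cdot_1=*$ and $\cdot_2=\cdot$ gives the two rules I will use throughout:
\begin{align*}
(a*b)\cdot c=a*(b\cdot c)\quad\text{and}\quad (a\cdot b)*c=a\cdot(b*c)
\end{align*}
for all $a,b,c\in K[X]^+$.

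First I would establish a \emph{move the right factor out} step: for every $a\in K[X]^+$, $y\in X$ and $b_1\in K[X]$,
\begin{align*}
a*(yb_1)=(a*y)b_1.
\end{align*}
When $b_1\in K[X]^+$ this is immediate from $a*(y\cdot b_1)=(a*y)\cdot b_1$, i.e. the first rule with $b=y$ and $c=b_1$, while the case $b_1\in K$ is trivial, so the general case follows by bilinearity. Symmetrically I would prove a \emph{move the left factor out} step: for every $x\in X$, $a_1\in K[X]$ and $c\in K[X]^+$,
\begin{align*}
(xa_1)*c=a_1(x*c),
\end{align*}
now using commutativity to rewrite $xa_1=a_1\cdot x$ and then the second rule with $a=a_1$, $b=x$ when $a_1\in K[X]^+$, again extending to all $a_1\in K[X]$ by bilinearity. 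Combining the two, with $c=yb_1\in K[X]^+$ in the second step and $a=x\in K[X]^+$ in the first, yields
\begin{align*}
(xa_1)*(yb_1)=a_1\bigl(x*(yb_1)\bigr)=a_1\bigl((x*y)b_1\bigr)=(x*y)a_1b_1,
\end{align*}
the last equality being commutativity of $\cdot$.

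The point that requires care is that $a_1$ and $b_1$ are allowed to lie in the \emph{unital} algebra $K[X]$, whereas the matching identities only hold in the non-unital algebra $K[X]^+$: one cannot factor $xa_1=x\cdot a_1$ inside $K[X]^+$ when $a_1$ has a nonzero constant term, since a bare constant is not an element of $K[X]^+$. This is exactly why each of the two steps is first proved for arguments in $K[X]^+$ and then extended by splitting off the constant term and invoking bilinearity; I expect this constant-term bookkeeping to be the only genuinely delicate part of the argument, the rest being a direct double application of the $\id$-matching rules.
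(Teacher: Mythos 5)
Your proof is correct and follows essentially the same route as the paper, which computes $(xa_1)*(yb_1)=(a_1x)*(yb_1)=a_1\cdot(x*(yb_1))=a_1\cdot(x*y)\cdot b_1=(x*y)a_1b_1$ in one line. The only difference is that you make explicit the bilinearity argument needed when $a_1$ or $b_1$ has a nonzero constant term, a point the paper's proof passes over silently.
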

	\begin{proof}
		We have $(xa_1)*(yb_1)=(a_1x)*(yb_1)=a_1\cdot(x*(yb_1))=a_1\cdot(x*y)\cdot b_1=(x*y)a_1b_1$.
	\end{proof}
	
	\begin{prop}\label{comp-prod-on-K[x]}
		Let $X=\{x\}$ and $*$ be an associative product on $K[X]^+$. Then the following are equivalent:
		\begin{enumerate}
			\item\label{*-id-matching-K[x]^+} $*$ is $\id$-matching with $\cdot$;
			\item\label{*-(12)-matching-K[x]^+} $*$ is $(12)$-matching with $\cdot$;
			\item\label{*-interchangeable-K[x]^+} $*$ is interchangeable with $\cdot$;
			\item\label{*-totally-compatible-K[x]^+} $*$ is totally compatible with $\cdot$;
			\item\label{a*b=a-over-x.p.b-over-x} there exists $p\in K[X]^+$ such that 
			\begin{align}\label{a*b=a_x.p.b_x}
				a*b=\frac{a\cdot b}{x^2}\cdot p
			\end{align}
			for all $a,b\in K[X]^+$.
		\end{enumerate}
	\end{prop}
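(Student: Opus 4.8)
The plan is to obtain three of the equivalences for free and to prove only two genuinely new implications. Since $K[X]^+$ is a subalgebra of the integral domain $K[X]$, it has no zero divisors, so \cref{(12)-match=>id-match-without-0-div} gives at once \cref{*-(12)-matching-K[x]^+}$\iff$\cref{*-interchangeable-K[x]^+}$\iff$\cref{*-totally-compatible-K[x]^+}. Moreover, \cref{*-totally-compatible-K[x]^+}$\impl$\cref{*-id-matching-K[x]^+} is trivial, because the two $\id$-matching identities are among the four equalities \cref{totally-comp-ass-identity}. Hence it suffices to establish \cref{*-id-matching-K[x]^+}$\impl$\cref{a*b=a-over-x.p.b-over-x} and \cref{a*b=a-over-x.p.b-over-x}$\impl$\cref{*-totally-compatible-K[x]^+}; together with the above these close all the implications.

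For \cref{*-id-matching-K[x]^+}$\impl$\cref{a*b=a-over-x.p.b-over-x} I would exploit that, for a single variable, $K[X]^+=xK[X]$: each $a\in K[X]^+$ is uniquely $a=xa_1$ with $a_1=a/x\in K[X]$. Given an $\id$-matching $*$, \cref{(xa_1)*(yb_1)-in-terms-of-(x*y)} with $y=x$ gives $a*b=(xa_1)*(xb_1)=(x*x)a_1b_1$ for all $a,b$. Substituting $a_1=a/x$, $b_1=b/x$ and putting $p:=x*x\in K[X]^+$, this is exactly $a*b=\frac{ab}{x^2}\,p$, i.e. \cref{a*b=a_x.p.b_x}. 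So this direction is essentially immediate from the earlier lemma, the only points to note being that $p$ lies in $K[X]^+$ and that the identity extends to all of $K[X]^+$ by bilinearity.

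For \cref{a*b=a-over-x.p.b-over-x}$\impl$\cref{*-totally-compatible-K[x]^+} I would argue by direct computation. The product is well defined with values in $K[X]^+$: since $a,b\in xK[X]$ the quotient $\frac{ab}{x^2}$ lies in $K[X]$, and multiplying by $p\in xK[X]$ returns to $xK[X]=K[X]^+$; here it is essential that $p\in K[X]^+$. Associativity holds because $(a*b)*c$ and $a*(b*c)$ both equal $\frac{abc\,p^2}{x^4}$, and total compatibility follows since, using commutativity, the four products
\begin{align*}
	(a*b)\cdot c,\qquad (a\cdot b)*c,\qquad a*(b\cdot c),\qquad a\cdot(b*c)
\end{align*}
all reduce to $\frac{abc\,p}{x^2}$.

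The computations are routine; the subtle point, and the one I would flag as the main obstacle, is understanding the role of the scaling by $1/x^2$ and resisting the temptation to replace \cref{a*b=a_x.p.b_x} by a product $*_\vf$ determined by some $\vf\in\G(K[X]^+)$. Indeed, by \cref{descr-G(K[X])^+} such a $\vf$ is multiplication by an element of $K[X]$, whereas \cref{a*b=a_x.p.b_x} amounts to multiplication by $p/x^2$, which need not belong to $K[X]$: already $p=x$ produces $a*b=ab/x$, a totally compatible product that is \emph{not} of the form $*_\vf$. Thus the single-variable algebra admits strictly more totally compatible structures than those arising from its centroid, which is exactly why the characterization has to be phrased through \cref{a*b=a_x.p.b_x}. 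Recognizing this is the conceptual heart of the statement, the rest being bookkeeping.
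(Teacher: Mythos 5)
Your proposal is correct and follows essentially the same route as the paper: the equivalences \cref{*-(12)-matching-K[x]^+}$\iff$\cref{*-interchangeable-K[x]^+}$\iff$\cref{*-totally-compatible-K[x]^+} via \cref{(12)-match=>id-match-without-0-div}, the trivial implication \cref{*-totally-compatible-K[x]^+}$\impl$\cref{*-id-matching-K[x]^+}, then \cref{*-id-matching-K[x]^+}$\impl$\cref{a*b=a-over-x.p.b-over-x} from \cref{(xa_1)*(yb_1)-in-terms-of-(x*y)} with $p=x*x$, and \cref{a*b=a-over-x.p.b-over-x}$\impl$\cref{*-totally-compatible-K[x]^+} by the same direct computation. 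Your closing observation that \cref{a*b=a_x.p.b_x} need not come from an element of $\G(K[X]^+)$ is also made in the paper, in the remark immediately following the proposition.
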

	\begin{proof}
		\cref{*-id-matching-K[x]^+}$\impl$\cref{a*b=a-over-x.p.b-over-x}. Let $*$ be $\id$-matching with $\cdot$. Then by \cref{(xa_1)*(yb_1)-in-terms-of-(x*y)} we have 
		\[x^m*x^n=(x*x)x^{m+n-2}=\frac{x^m\cdot x^n}{x^2}(x*x).\] 
		Denoting $p:=x*x$, we get \cref{a*b=a_x.p.b_x} by bilinearity of $*$ and $\cdot$. 
		
		\cref{a*b=a-over-x.p.b-over-x}$\impl$\cref{*-totally-compatible-K[x]^+}. Suppose that $*$ is given by \cref{a*b=a_x.p.b_x}. Then $(a*b)*c=a*(b*c)=\frac{a\cdot b\cdot c}{x^4}\cdot p^2$ and 
		\[(a*b)\cdot c=a*(b\cdot c)=(a\cdot b)*c=a\cdot(b*c)=\frac{a\cdot b\cdot c}{x^2}\cdot p\]
		for all $a,b,c\in K[X]^+$.
		
		\cref{*-totally-compatible-K[x]^+}$\impl$\cref{*-id-matching-K[x]^+} is trivial. The remaining equivalences are \cref{(12)-match=>id-match-without-0-div}.
	\end{proof}
	
	\begin{rem}
		Observe by \cref{descr-G(K[X])^+} that the product \cref{a*b=a_x.p.b_x} from \cref{comp-prod-on-K[x]}\cref{a*b=a-over-x.p.b-over-x} is determined by some $\vf\in\G(K[X]^+)$ if and only if $\deg(p)\ge 2$.
	\end{rem}
	
	\begin{lem}\label{x*y-in-terms-of-x-cdot-y}
		Let $|X|>1$ and $*$ be an $\id$-matching structure on $(K[X]^+,\cdot)$. Then there exists $\vf\in \G(K[X]^+)$ such that for all $x,y\in X$:
		\begin{align}\label{x*y=xy-cdot-p}
			x*y=\vf(x\cdot y).
		\end{align}
	\end{lem}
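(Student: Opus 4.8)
The plan is to reduce everything, via \cref{descr-G(K[X])^+}, to exhibiting a single polynomial $p\in K[X]$ (not depending on the variables) with $x*y=p\cdot x\cdot y$ for all $x,y\in X$. Once this is done, the map $\vf(a):=pa$ lies in $\G(K[X]^+)$ by \cref{descr-G(K[X])^+} and satisfies $\vf(x\cdot y)=p\cdot x\cdot y=x*y$, which is exactly \cref{x*y=xy-cdot-p}. So the whole task is to show that the values $x*y$ are simultaneously of the form $p\cdot(x\cdot y)$.

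First I would extract divisibility information about $x*z$. Applying the $\id$-matching identity $(a\cdot b)*c=a\cdot(b*c)$ to variables $a=y$, $b=x$, $c=z$ and rewriting the left-hand side $(y\cdot x)*z$ with \cref{(xa_1)*(yb_1)-in-terms-of-(x*y)} (pulling out the leading variable $y$) should give the relation $x\cdot(y*z)=y\cdot(x*z)$ for all $x,y,z\in X$. Since $|X|>1$, for each $x$ there is a variable $y\ne x$; as distinct variables are non-associate primes in the UFD $K[X]$, the equality $y\cdot(x*z)=x\cdot(y*z)\in xK[X]$ forces $x\mid(x*z)$. Writing $x*z=x\,g(x,z)$ with $g(x,z)\in K[X]$ and substituting back, cancellation of $xy$ in the domain $K[X]$ yields $g(x,z)=g(y,z)$, so $g$ depends only on its second argument; I denote it $h(z)$ and record that $x*z=x\,h(z)$.

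Next I would pin down $h$. Because $\cdot$ is commutative, \cref{(xa_1)*(yb_1)-in-terms-of-(x*y)} applied to $x*(yz)$ through the two factorizations $yz=y\cdot z=z\cdot y$ produces the consistency relation $(x*y)\cdot z=(x*z)\cdot y$. Substituting $x*y=x\,h(y)$ and $x*z=x\,h(z)$ and cancelling $x$ leaves $z\,h(y)=y\,h(z)$ for all $y,z\in X$. Running the same coprimality-and-cancellation argument once more (now choosing $y\ne z$) gives $z\mid h(z)$, say $h(z)=z\,k(z)$, and then $k(y)=k(z)$, so $k$ is a fixed element $p\in K[X]$. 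Hence $x*z=x\cdot z\cdot p=p\cdot(x\cdot z)$, which is the desired form, and the lemma follows from \cref{descr-G(K[X])^+}.

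The main obstacle is really just the careful bookkeeping of these divisibility and cancellation steps: one must use $|X|>1$ to guarantee a second variable, invoke that distinct variables are non-associate primes in $K[X]$ to pass from divisibility of a product to divisibility of $x*z$ itself, and --- since $*$ is a priori non-commutative --- combine the left-sided matching identity with the right-sided consistency relation supplied by commutativity of $\cdot$. I expect no conceptual difficulty beyond this; once $x*y=p\cdot(x\cdot y)$ is in hand the conclusion is immediate.
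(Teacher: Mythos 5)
Your proposal is correct and takes essentially the same route as the paper: both extract the divisibility relations from the $\id$-matching identities evaluated on variables, use commutativity of $\cdot$ together with the fact that distinct variables are non-associate primes in $K[X]$ to conclude $x*y=p\cdot x\cdot y$ for a single $p$, and then invoke \cref{descr-G(K[X])^+}. The only (cosmetic) difference is that your three-variable relation $x\cdot(y*z)=y\cdot(x*z)$ lets you skip the paper's degree-two computation of $(xu)*(yv)$ when showing the quotient is independent of the variables.
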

	\begin{proof}
		For all $x\ne y$ in $X$ we have $x\cdot (y*y)=(xy)*y=y\cdot (x*y)$. It follows that $x*y$ is divisible by $x$ and $y*y$ is divisible by $y$ (in $K[X]$). Similarly, changing the brackets in $x*(yx)=x*(xy)$, we get 
		\begin{align}\label{(x*y)x=(x*x)y}
			(x*y)\cdot x=(x*x)\cdot y,
		\end{align}
		which shows that $x*y$ is divisible by $y$ (in $K[X]$). Thus, for all $x\in X$ there is $p_x\in K[X]$ and for all $x,y\in X$, $x\ne y$, there is $p_{x,y}\in K[X]$ such that 
		\begin{align*}
			x*x=xp_x\text{ and }x*y=xyp_{x,y}.
		\end{align*}
		Choosing another pair $u\ne v$ in $X$, we have 
		\[(xu)*(yv)=x\cdot (u*(yv))=x\cdot (u*v)\cdot y=xyuvp_{u,v}\] and similarly \[(xu)*(yv)=u\cdot (x*y)\cdot v=xyuvp_{x,y}.\] Thus, 
		\begin{align*}
			p_{x,y}=p_{u,v}=:p,
		\end{align*}
		and \cref{x*y=xy-cdot-p} holds for $x\ne y$ and $\vf(x)=px$. Moreover, \cref{(x*y)x=(x*x)y} shows that $xyp_x=x^2yp$, whence 
		\begin{align*}
			p_x=xp,
		\end{align*}
		and \cref{x*y=xy-cdot-p} holds for $x=y$ and the same $\vf$ too.
	\end{proof}
	
	\begin{prop}\label{comp-prod-on-K[X]-with-|X|>1}
		Let $|X|>1$ and $*$ be an associative product on $K[X]^+$. Then the following are equivalent:
		\begin{enumerate}
			\item\label{*-id-matching-K[X]^+} $*$ is $\id$-matching with $\cdot$;
			\item\label{*-(12)-matching-K[X]^+} $*$ is $(12)$-matching with $\cdot$;
			\item\label{*-interchangeable-K[X]^+} $*$ is interchangeable with $\cdot$;
			\item\label{*-totally-compatible-K[X]^+} $*$ is totally compatible with $\cdot$;
			\item\label{*-mutation-by-p} $*$ is determined by some $\vf\in\G(K[X]^+)$.
		\end{enumerate}
	\end{prop}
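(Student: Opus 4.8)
The plan is to read off all five equivalences from the lemmas already in place, exploiting that $K[X]^+$ is an ideal in the integral domain $K[X]$ and hence has no zero divisors. With this observation, \cref{(12)-match=>id-match-without-0-div} applies verbatim and gives the pairwise equivalence of \cref{*-(12)-matching-K[X]^+,*-interchangeable-K[X]^+,*-totally-compatible-K[X]^+} at once, so the whole task reduces to attaching the $\id$-matching condition \cref{*-id-matching-K[X]^+} and the centroid condition \cref{*-mutation-by-p} to this common block.

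Two of the remaining implications are immediate. For \cref{*-mutation-by-p}$\impl$\cref{*-totally-compatible-K[X]^+} I would simply quote \cref{a*b=vf(ab)-tot-comp}: since $\cdot$ is associative, a product determined by $\vf\in\G(K[X]^+)$ is automatically associative and totally compatible with $\cdot$. For \cref{*-totally-compatible-K[X]^+}$\impl$\cref{*-id-matching-K[X]^+}, total compatibility \cref{totally-comp-ass-identity} yields in particular $(a\cdot b)*c=a\cdot(b*c)$ and $(a*b)\cdot c=a*(b\cdot c)$, which are exactly the two $\id$-matching identities \cref{matching-ass-identity} for $\sg=\id$; so this implication is trivial.

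The only substantive step is \cref{*-id-matching-K[X]^+}$\impl$\cref{*-mutation-by-p}, and the two lemmas preceding the proposition are designed precisely for it. Starting from an $\id$-matching $*$, \cref{x*y-in-terms-of-x-cdot-y} produces $\vf\in\G(K[X]^+)$ with $x*y=\vf(x\cdot y)$ on variables, and \cref{descr-G(K[X])^+} writes this $\vf$ as $\vf(c)=pc$ for a fixed $p\in K[X]$. To pass from variables to arbitrary elements I would write a monomial $a$ as $a=xa_1$ and a monomial $b$ as $b=yb_1$ with $x,y\in X$ and $a_1,b_1\in K[X]$, and apply \cref{(xa_1)*(yb_1)-in-terms-of-(x*y)}:
\[
a*b=(x*y)\,a_1b_1=\vf(x\cdot y)\,a_1b_1=p\,(x\cdot y)\,a_1b_1=p\,(a\cdot b)=\vf(a\cdot b),
\]
where the penultimate equality uses commutativity to recombine $(x\cdot y)a_1b_1=(xa_1)\cdot(yb_1)=a\cdot b$. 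Extending by bilinearity then gives $a*b=\vf(a\cdot b)$ for all $a,b\in K[X]^+$, i.e. \cref{*-mutation-by-p}.

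I do not expect a genuine obstacle here: all the real difficulty — the divisibility arguments in $K[X]$ and the identification of the centroid — has already been absorbed into the supporting lemmas, so the proposition itself is a clean assembly of them. The single point demanding a moment's care is the upgrade from the variable-level identity to all of $K[X]^+$, which is exactly why both \cref{(xa_1)*(yb_1)-in-terms-of-(x*y)} and the explicit shape of $\vf$ from \cref{descr-G(K[X])^+} must be invoked together.
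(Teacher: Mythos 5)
Your proposal is correct and follows essentially the same route as the paper: the block \cref{*-(12)-matching-K[X]^+}$\iff$\cref{*-interchangeable-K[X]^+}$\iff$\cref{*-totally-compatible-K[X]^+} via \cref{(12)-match=>id-match-without-0-div}, the trivial implications \cref{*-mutation-by-p}$\impl$\cref{*-totally-compatible-K[X]^+}$\impl$\cref{*-id-matching-K[X]^+}, and the key step \cref{*-id-matching-K[X]^+}$\impl$\cref{*-mutation-by-p} assembled from \cref{(xa_1)*(yb_1)-in-terms-of-(x*y),x*y-in-terms-of-x-cdot-y}. The only cosmetic difference is that you route the final computation through the explicit form $\vf(c)=pc$ from \cref{descr-G(K[X])^+}, where the paper uses the centroid property of $\vf$ directly; both are equally valid.
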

	\begin{proof}
		\cref{*-id-matching-K[X]^+}$\impl$\cref{*-mutation-by-p}. Let $*$ be $\id$-matching with $\cdot$. In view of \cref{(xa_1)*(yb_1)-in-terms-of-(x*y),x*y-in-terms-of-x-cdot-y} there exists $\vf\in \G(K[X]^+)$ such that 
		\[(xa_1)*(yb_1)=(x*y)a_1b_1=\vf(x\cdot y)\cdot a_1b_1=\vf(xa_1\cdot yb_1)\] for all $x,y\in X$ and $a_1,b_1\in K[X]$. Then \cref{*-mutation-by-p} follows by bilinearity. 
		
		The implications \cref{*-mutation-by-p}$\impl$\cref{*-totally-compatible-K[X]^+} and \cref{*-totally-compatible-K[X]^+}$\impl$\cref{*-id-matching-K[X]^+} hold in any associative algebra. 
		
		The remaining equivalences are \cref{(12)-match=>id-match-without-0-div}.
	\end{proof}
	
	
	\subsection*{Acknowledgements}
	This work was partially supported by CMUP, member of LASI, which is financed by national funds through FCT --- Fundação para a Ciência e a Tecnologia, I.P., under the project with reference UIDB/00144/2020. The author is grateful to Ivan Kaygorodov for useful comments on associative algebras and to the referees whose suggestions helped to correct several typos and improved the readability of the paper.
	
	
	{\small    
		
	}
	
	\EditInfo{July 29, 2024}{September 22, 2024}{Ivan Kaygorodov and David Towers}
\end{document}